 \newtheorem{theorem}{Theorem}
 \newtheorem{lemma}[theorem]{Lemma}
\theoremstyle{definition}
\theoremstyle{remark}
\newcommand{\ep}{\varepsilon}
\newcommand{\p}{\partial}
\newcommand{\re}{\operatorname{Re}}
\newcommand{\im}{\operatorname{Im}}
\begin{document}
\title[Semilinear Schr\"odinger Equations]{Gain of analyticity for semilinear Schr\"odinger equations}
\author[H.~Chihara]{Hiroyuki Chihara}
\thanks{Supported by JSPS Grant-in-Aid for Scientific Research \#20540151.}
\address{Mathematical Institute,  
         Tohoku University, 
         Sendai 980-8578, Japan}
\email{chihara@math.tohoku.ac.jp}
\subjclass[2000]{Primary 35Q55, Secondary 35G25}
\begin{abstract}
We discuss gain of analyticity phenomenon of solutions to 
the initial value problem for semilinear Schr\"odinger equations 
with gauge invariant nonlinearity. 
We prove that if the initial data decays exponentially, 
then the solution becomes real-analytic in the space variable 
and a Gevrey function of order $2$ in the time variable 
except in the initial plane. 
Our proof is based on the energy estimates developed in our previous work 
and on fine summation formulae concerned with a matrix norm. 
\end{abstract}
\maketitle
\section{Introduction}
\label{section:introduction}
In this paper we study the gain of regularity phenomenon of solutions to 
the initial value problem for semilinear Schr\"odinger equations 
of the form: 
\begin{alignat}{2}
   \p_tu-i\Delta u 
 & = 
   f(u,\p{u}) 
&
   \quad \text{in} \quad 
 & (-T,T)\times\mathbb{R}^n, 
\label{equation:pde1}
\\ 
   u(0,x) 
 & = 
   u_0(x) 
& 
   \quad \text{in} \quad 
 & \mathbb{R}^n,  
\label{equation:data1}
\end{alignat}
where $u(t,x)$ is a complex-valued unknown function of 
$(t,x)=(t,x_1,\dotsc,x_n)\in[-T,T]\times\mathbb{R}^n$, 
$T>0$, 
$i=\sqrt{-1}$, 
$\p_t=\p/\p{t}$, 
$\p_j=\p/\p{x_j}$ ($j=1,\dotsc,n$), 
$\p=(\p_1,\dotsc,\p_n)$,   
$\Delta=\p_1^2+\dotsb+\p_n^2$ 
and 
$n$ is the space dimension. 
Throughout this paper, we assume that 
the nonlinearity $f(u,v)$ is a real-analytic function 
on $\mathbb{R}^{2+2n}$ having a holomorphic extension on
$\mathbb{C}^{2+2n}$, and that $f(u,v)$ satisfies 
$$
f(u,v)=O(\lvert{u}\rvert^3+\lvert{v}\rvert^3)
\quad\text{near}\quad
(u,v)=0, 
$$
\begin{equation}
f(e^{i\theta}u,e^{i\theta}v)
=
e^{i\theta}f(u,v)
\quad\text{for}\quad
(u,v)\in\mathbb{C}^{1+n}, 
\theta\in\mathbb{R}. 
\label{equation:gauge}
\end{equation}
For $z=(u,v)\in\mathbb{C}^{1+n}$ and  any multi-index 
$\alpha=(\alpha_0,\dotsc,\alpha_n)\in(\mathbb{N}\cup\{0\})^{1+n}$, 
we denote 
$$
\lvert\alpha\rvert=\alpha_0+\dotsb+\alpha_n, 
\quad
z^\alpha=u^{\alpha_0}v_1^{\alpha_1}\dotsb{v}_n^{\alpha_n}.
$$ 
It follows from our hypothesis on the nonlinearity 
that $f(z)$ is given by  
\begin{equation}
f(z)
=
\sum_{p=1}^\infty
\sum_{\substack{\lvert\alpha\rvert=p+1
                \\
                \rvert\beta\rvert=p}}
f_{\alpha\beta}z^\alpha\bar{z}^\beta, 
\quad
f_{\alpha\beta}\in\mathbb{C}, 
\label{equation:nonlinearity}
\end{equation}
and that for any $R>0$ there exists $C_R>0$ such that 
$$
A_p
\equiv
\sum_{\substack{\lvert\alpha\rvert=p+1
                \\
                \rvert\beta\rvert=p}}
\lvert{f_{\alpha\beta}}\rvert
\leqslant
C_RR^{-(2p+1)}, 
\quad
p=1,2,3\dotsc.
$$
\par
Here we introduce notation. 
Let $\theta$ and $l$ be real numbers. 
$H^{\theta,l}$ is the set of 
all tempered distributions on $\mathbb{R}^n$ satisfying 
$$
\lVert{u}\rVert_{\theta,l}^2
=
\int_{\mathbb{R}^n}
\lvert
\langle{x}\rangle^l
\langle{D}\rangle^\theta
u(x)
\rvert^2
dx
<+\infty, 
$$
where 
$\langle{x}\rangle=\sqrt{1+\lvert{x}\rvert^2}$, 
$\lvert{x}\rvert=\sqrt{x_1^2+\dotsb+x_n^2}$ 
and 
$\langle{D}\rangle=(1-\Delta)^{1/2}$. 
In particular, set 
$H^\theta=H^{\theta,0}$, 
$\lVert\cdot\rVert_\theta=\lVert\cdot\rVert_{\theta,0}$, 
$L^2=H^0$ for short. 
$\lVert\cdot\rVert$ and $(\cdot,\cdot)$ denote 
the $L^2$-norm and the $L^2$-inner product respectively. 
In this paper we treat not only scalar-valued functions 
but also vector-valued ones. 
The $(L^2)^m$-norm and the $(L^2)^m$-inner product is denoted 
by the same notation:
$$
(u,v)
=
\int_{\mathbb{R}^n}
\sum_{j=1}^mu_j(x)\bar{v}_j(x)dx, 
\quad
\lVert{u}\rVert
=
\sqrt{(u,u)}
$$ 
for 
$u={}^t[u_1,\dotsc,u_m]$ 
and 
$v={}^t[v_1,\dotsc,v_m]$. 
Let $X$ be a Banach space, 
and let $k$ be a nonnegative integer. 
$C^k(I;X)$ denotes the set of all $X$-valued $C^k$-functions 
on the interval $I$. 
In particular, set $C(I;X)=C^0(I;X)$ for short. 
For any real number $s$, 
$[s]$ is the largest integer not greater than $s$. 
\par
In the previous paper \cite{chihara} the author studied 
the finite gain of regularity of solutions to 
\eqref{equation:pde1}-\eqref{equation:data1}. 
Loosely speaking, 
if $u_0(x)=o(\lvert{x}\rvert^{-l})$ 
as $\lvert{x}\rvert\rightarrow\infty$ 
with some positive integer $l$, 
then 
the solution $u$ gains spatial smoothness of order $l$ 
locally in $x$ when $t\ne0$. 
More precisely, we proved the following. 
\begin{theorem}[\cite{chihara}]
\label{theorem:previous}
Let $\theta>n/2+3$, 
and let $l$ be a nonnegative integer. 
Then for any $u_0\in{H}^{\theta,l}$, 
there exists $T>0$ depending only on $\lVert{u_0}\rVert_\theta$ such that 
{\rm \eqref{equation:pde1}-\eqref{equation:data1}} 
has a unique solution 
$u{\in}C([-T,T];H^\theta)$ 
satisfying 
$$
\langle{x}\rangle^{-\lvert\alpha\rvert}\p^\alpha{u}
\in
C([-T,T]\setminus\{0\};H^\theta)
\quad\text{for}\quad
\lvert\alpha\rvert\leqslant{l}.
$$
\end{theorem}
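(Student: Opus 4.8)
The plan is to deduce Theorem~\ref{theorem:previous} from local well-posedness in $C([-T,T];H^\theta)$ by an induction on the number of Galilean derivatives $J_j=x_j+2it\p_j$ ($j=1,\dotsc,n$). These operators commute with the free operator $L_0=\p_t-i\Delta$, satisfy $[J_j,\p_k]=-\delta_{jk}$ and $[J_j,x_k]=2it\delta_{jk}$, and for $t\ne0$ obey $J_j=2it\,e^{i\lvert x\rvert^2/4t}\p_j\bigl(e^{-i\lvert x\rvert^2/4t}\,\cdot\,\bigr)$, whence $\p_j=(2it)^{-1}(J_j-x_j)$. First I would settle local existence and uniqueness of $u\in C([-T,T];H^\theta)$ with $T=T(\lVert u_0\rVert_\theta)$. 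Since $f$ carries one derivative, the naive $H^\theta$ energy estimate loses a derivative through the term $\sum_k(\p_{v_k}f)(u,\p u)\,\p_k\langle D\rangle^\theta u$; this is compensated by the local smoothing effect of $L_0$ (a gain of half a derivative) together with a pseudodifferential gauge transformation $e^{K(t,x,D)}$ of order zero removing the antiselfadjoint part of the first-order term, as in \cite{chihara}. The margin $\theta>n/2+3$ secures the attendant symbol-calculus and commutator estimates: then $\p u\in H^{\theta-1}$, which is an algebra embedded in $C^2$, so the first derivatives of $f$ evaluated at $(u,\p u)$ together with their $x$-derivatives are controlled. Applying the weight $\langle x\rangle^l$ to \eqref{equation:pde1} and using that $f$ is cubic, one also checks that weighted regularity propagates, $u_0\in H^{\theta,l}\Rightarrow u\in C([-T,T];H^{\theta,l})$, hence $\lvert u(t,x)\rvert+\lvert\p u(t,x)\rvert\lesssim\langle x\rangle^{-l}$.

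The induction runs on $m=0,1,\dotsc,l$ with the claim $J^\alpha u\in C([-T,T];H^\theta)$ for $\lvert\alpha\rvert\le m$; the base case $m=0$ is the local theory. For the inductive step, set $\phi=\lvert x\rvert^2/4t$, $\tilde u=e^{-i\phi}u$ (for $t\ne0$), and $\p_j^\phi=\p_j+ix_j/2t$. Since $\phi$ is real-valued, the gauge invariance \eqref{equation:gauge} applies at each point and gives $e^{-i\phi}f(u,\p u)=f(\tilde u,\p^\phi\tilde u)$, whence $J^\alpha f(u,\p u)=e^{i\phi}(2it)^{\lvert\alpha\rvert}\p^\alpha\bigl[f(\tilde u,\p^\phi\tilde u)\bigr]$. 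Expanding $\p^\alpha$ by the chain rule and carrying every factor back through the identities $e^{i\phi}(2it)^{\lvert\gamma\rvert}\p^\gamma\tilde u=J^\gamma u$ and $e^{i\phi}(2it)^{\lvert\gamma\rvert}\p_k^\phi\p^\gamma\tilde u=\p_k(J^\gamma u)$ — the homogeneity exponents of the derivatives of $f$ forced by \eqref{equation:gauge} making every phase factor cancel — one finds that $v_\alpha:=J^\alpha u$ solves
\begin{equation}
L_0v_\alpha
=
\sum_{k=1}^n(\p_{v_k}f)(u,\p u)\,\p_kv_\alpha
-\sum_{k=1}^n(\p_{\bar v_k}f)(u,\p u)\,\p_k\overline{v_\alpha}
+R_\alpha,
\qquad
v_\alpha|_{t=0}=x^\alpha u_0\in H^\theta,
\label{equation:Jeq}
\end{equation}
where $R_\alpha$ is a polynomial in $u$ and in $\{J^\gamma u,\p_rJ^\gamma u,\overline{J^\gamma u},\overline{\p_rJ^\gamma u}:1\le r\le n,\ \lvert\gamma\rvert<\lvert\alpha\rvert\}$ whose coefficients are derivatives of $f$ evaluated at $(u,\p u)$ — hence, by the propagation of weights, rapidly decaying in $x$; likewise the first-order coefficients $(\p_{v_k}f)(u,\p u)$ are quadratic in $(u,\p u)$, so $O(\langle x\rangle^{-2l})$, which for $l\ge1$ decays faster than $\lvert x\rvert^{-1}$.

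The decisive point, and where the main obstacle lies, is the energy estimate for \eqref{equation:Jeq} in $H^\theta$. It does not close naively: applying $\langle D\rangle^\theta$ and integrating, the first-order term and the factors $\p_rJ^\gamma u$ in $R_\alpha$ produce contributions like $\int\im\bigl((\p_{v_k}f)(u,\p u)\bigr)\,\im\bigl(\overline{\langle D\rangle^\theta v_\alpha}\,\p_k\langle D\rangle^\theta v_\alpha\bigr)\,dx$ that are not dominated by $\lVert v_\alpha\rVert_\theta^2$. This loss of a derivative is exactly what the estimates of \cite{chihara} defeat: a pseudodifferential gauge transformation $e^{K}$ whose order-zero symbol is comparable to $\int_0^\infty\im\bigl(\sum_k(\p_{v_k}f)(u,\p u)(t,x+s\omega)\,\omega_k\bigr)\,ds$ along the straight bicharacteristics $\omega=\xi/\lvert\xi\rvert$ — bounded precisely because the coefficients decay like $\langle x\rangle^{-2l}$ with $2l\ge2$ — renders the offending first-order term harmless, while the (inhomogeneous) smoothing estimates for $L_0$ recover the derivative needed to absorb the terms $\p_rJ^\gamma u$ with $\lvert\gamma\rvert<\lvert\alpha\rvert$ against the decay of their coefficients, the inductive hypothesis supplying the bounds on $J^\gamma u$. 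This yields $\sup_{\lvert t\rvert\le T}\lVert J^\alpha u(t)\rVert_\theta<\infty$ for $\lvert\alpha\rvert=m$. Since the estimates are stable under regularization, I would run the argument first for $u_0^\ep\in\mathscr{S}$ — for which $J^\alpha u^\ep$ is classical and genuinely solves \eqref{equation:Jeq} — and then pass to the limit, obtaining $J^\alpha u\in C([-T,T];H^\theta)$ for $\lvert\alpha\rvert\le l$.

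It remains to convert back to ordinary derivatives. From $\p_j=(2it)^{-1}(J_j-x_j)$ and $[J_j,x_k]=2it\delta_{jk}$ one sees that, for $t\ne0$, $\p^\alpha$ is a finite linear combination of the operators $(2it)^{-\lvert\gamma\rvert}x^{\alpha-\gamma}J^\gamma$ with $\gamma\le\alpha$, together with lower-order commutator terms carrying further powers of $t$. Hence $\langle x\rangle^{-\lvert\alpha\rvert}\p^\alpha u$ is a like combination of terms $\bigl(\langle x\rangle^{-\lvert\alpha\rvert}x^{\alpha-\gamma}\bigr)(2it)^{-\lvert\gamma\rvert}J^\gamma u$: the function $\langle x\rangle^{-\lvert\alpha\rvert}x^{\alpha-\gamma}$ is smooth with bounded derivatives and satisfies $\lvert\langle x\rangle^{-\lvert\alpha\rvert}x^{\alpha-\gamma}\rvert\le\langle x\rangle^{-\lvert\gamma\rvert}\le1$, so it is a bounded multiplier on $H^\theta$, while $J^\gamma u\in C([-T,T];H^\theta)$ and $(2it)^{-\lvert\gamma\rvert}$ is continuous on $[-T,T]\setminus\{0\}$. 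Therefore $\langle x\rangle^{-\lvert\alpha\rvert}\p^\alpha u\in C([-T,T]\setminus\{0\};H^\theta)$ for $\lvert\alpha\rvert\le l$, which is the conclusion of the theorem. The genuine difficulty of the proof is thus concentrated in the $H^\theta$ energy estimate for \eqref{equation:Jeq}, that is, in recovering the lost derivative through the gauge transformation and the smoothing effect of the Schr\"odinger group.
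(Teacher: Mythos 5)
Your overall strategy is the same one the paper relies on (the theorem is quoted from \cite{chihara}, and Sections 4--6 of the present paper redevelop exactly this machinery): pass to the vector of $J^\alpha u$ and their conjugates, use the gauge invariance \eqref{equation:gauge} and the identity $J^\alpha=e^{i|x|^2/4t}(2it)^{|\alpha|}\p^\alpha e^{-i|x|^2/4t}$ to derive a Schr\"odinger system with first-order coefficient matrices built from $\p f/\p v_k$ and $\p f/\p\bar v_k$, remove the derivative loss by a Doi-type pseudodifferential gauge transform (together with a block-diagonalization handling the $\p_k\overline{v_\alpha}$ coupling, which you only allude to via ``as in \cite{chihara}''), and finally convert $J^\gamma u\in C([-T,T];H^\theta)$ into $\langle x\rangle^{-|\alpha|}\p^\alpha u\in C([-T,T]\setminus\{0\};H^\theta)$ exactly as in Lemma~\ref{theorem:hermite}.

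There is, however, one genuinely false intermediate claim: ``$u_0\in H^{\theta,l}\Rightarrow u\in C([-T,T];H^{\theta,l})$, hence $|u(t,x)|+|\p u(t,x)|\lesssim\langle x\rangle^{-l}$.'' Polynomial weights do not propagate for Schr\"odinger evolutions unless matched by corresponding extra smoothness: already for the free flow, $\|x^l e^{it\Delta}u_0\|_{L^2}=\|\p_\xi^l(e^{-it|\xi|^2}\hat u_0)\|_{L^2}$ requires $u_0\in H^l$, so for $l>\theta$ the solution leaves $H^{\theta,l}$ instantly. Indeed the whole point of the theorem is that the weight is traded for smoothness via $J^\alpha$, not preserved as a weight. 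Consequently your justification for the boundedness of the Doi symbol (``the coefficients are $O(\langle x\rangle^{-2l})$'') and for the decay of the coefficients of $R_\alpha$ rests on a false premise. The correct mechanism, used in \eqref{equation:doi1}--\eqref{equation:doi2} and in the construction of $\phi^l$ in Section~\ref{section:energy}, needs no pointwise decay of $u$ at all: because $f$ is at least cubic and gauge invariant, the first-order coefficients are (at least) quadratic in $(u,\p u)$ with $u(t)\in H^\theta$, so their suprema over the hyperplanes $\{x_j=s\}$ are integrable in $s$ by Cauchy--Schwarz and the trace estimate, which is exactly the integrability along bicharacteristics that makes $e^{K}$ a bounded operator with bounded inverse. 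With that substitution your argument closes; as written, the weighted-persistence step should be deleted rather than repaired.
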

This type of properties of dispersive equations 
have been investigated in the last two decades. 
See, e.g., the references in \cite{chihara}. 
For local existence theorems for more general 
semilinear Schr\"odinger-type equations, 
see \cite{kpv}, \cite{rolvung}, \cite{t'joen}. 
More recently, in \cite{hnp} Hayashi, Naumikin and Pipolo 
studied the infinite version of Theorem~\ref{theorem:previous} 
for one-dimensional equations with small initial data. 
Roughly speaking, they proved that if 
$u_0$ is small and 
$u_0(x)=o(e^{-\lvert{x}\rvert})$ 
as $\lvert{x}\rvert\rightarrow\infty$, 
then the solution $u$ becomes real-analytic in $x$ for $t\ne0$.  
The purpose of this paper is to prove the infinite version of 
Theorem~\ref{theorem:previous} 
without smallness condition on the initial data
and the restriction on the space dimension. 
Our main results are the following. 
\begin{theorem}
\label{theorem:main}
Let $\theta$ and $s$ be positive numbers satisfying 
$\theta>n/2+3$ and $s\geqslant1$ respectively, 
and let $\ep$ be an arbitrary positive number. 
For any $u_0$ satisfying 
$\exp(\ep\langle{x}\rangle^{1/s})u_0\in{H^\theta}$, 
there exist a positive time $T$ depending only on 
$\lVert{u_0}\rVert_\theta$, 
and a unique solution 
$u{\in}C([-T,T];H^\theta)$ 
to {\rm \eqref{equation:pde1}-\eqref{equation:data1}}. 
Moreover there exist positive constants 
$M$ and $\rho$ 
such that 
\begin{equation}
\lVert
\langle{x}\rangle^{-2m-\lvert\alpha\rvert}
\p_t^m\p^\alpha{u(t)}
\rVert_\theta
\leqslant
M(\rho{t})^{-(2m+\lvert\alpha\rvert)}
m!^{2s}\alpha!^s
\label{equation:gevrey}
\end{equation}
for 
$t\in[-T,T]\setminus\{0\}$, 
$m\in\mathbb{N}\cup\{0\}$, 
$\alpha\in(\mathbb{N}\cup\{0\})^n$.
\end{theorem}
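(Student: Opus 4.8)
The plan is to run the energy method of \cite{chihara} not for a single derivative count $l$ but uniformly in $l$, and then to package the resulting hierarchy of weighted estimates into the Gevrey bound \eqref{equation:gevrey} by a combinatorial summation argument. First I would recall that local existence of $u\in C([-T,T];H^\theta)$ with $T=T(\lVert u_0\rVert_\theta)$ is already contained in Theorem \ref{theorem:previous} (take $l=0$), so the entire difficulty is the a priori bound. The natural quantity to control is, for each nonnegative integer $N$,
\begin{equation}
E_N(t)=\sum_{2m+\lvert\alpha\rvert\le N}
\frac{\rho_0^{\,2m+\lvert\alpha\rvert}}{m!^{2s}\alpha!^{s}}
\lVert\langle x\rangle^{-2m-\lvert\alpha\rvert}\langle t\rangle^{2m+\lvert\alpha\rvert}\p_t^m\p^\alpha u(t)\rVert_\theta ,
\end{equation}
or a variant with $t$ replaced by the distance to the initial plane; one shows $E_N$ stays bounded independently of $N$ on $[-T,T]$, shrinking $T$ and $\rho_0$ if necessary but keeping them independent of $N$. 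Because $\p_t u=i\Delta u+f(u,\p u)$, every time derivative can be traded for two space derivatives plus a nonlinear term, so it suffices to control the purely spatial weighted norms $\lVert\langle x\rangle^{-k}\p^\alpha u\rVert_\theta$ with $k=\lvert\alpha\rvert$; the passage from spatial to space--time estimates is then the routine induction on $m$ using the equation, which accounts for the $m!^{2s}$ factor (two spatial derivatives per time derivative, whence the square, and $s\ge 1$ absorbs the constants).

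The core step is the weighted energy estimate for the linear-plus-nonlinear problem applied to $v_\alpha=\langle x\rangle^{-\lvert\alpha\rvert}\p^\alpha u$. Differentiating \eqref{equation:pde1} by $\p^\alpha$ and conjugating by the weight $\langle x\rangle^{-\lvert\alpha\rvert}$ produces an equation of the form $\p_t v_\alpha-i\Delta v_\alpha=(\text{first-order commutator terms})+\p^\alpha f(u,\p u)$ localized by the weight. The commutator $[\langle x\rangle^{-\lvert\alpha\rvert}\langle D\rangle^\theta,\cdot]$ and the gradient terms coming from $[\Delta,\langle x\rangle^{-\lvert\alpha\rvert}]$ are handled exactly as in \cite{chihara}, the point being that the gauge invariance \eqref{equation:gauge} is what makes the first-order part of the nonlinearity symmetrizable so that the energy inequality closes; I would quote that mechanism rather than redo it. The genuinely new work is to expand $\p^\alpha f(u,\p u)$ using \eqref{equation:nonlinearity}, i.e. $f=\sum_p\sum_{\lvert\gamma\rvert=p+1,\lvert\delta\rvert=p}f_{\gamma\delta}z^\gamma\bar z^\delta$ with the bound $A_p\le C_R R^{-(2p+1)}$, and to distribute the $\p^\alpha$ over the $2p+1$ factors by the Leibniz rule. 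Each factor is a $u$ or a $\p_j u$ carrying some sub-multi-index; after inserting the weights $\langle x\rangle^{-|\cdot|}$ on each factor (which is consistent because the exponents add up and because $u_0$ decays like $\exp(-\ep\langle x\rangle^{1/s})$, giving the finite $H^\theta$ seed for $k=0$), each factor is estimated in $H^\theta$ by the Moser-type/Sobolev algebra inequality since $\theta>n/2$. The outcome is a recursive inequality for $a_k(t):=\sup_{|\alpha|=k}\lVert\langle x\rangle^{-k}\p^\alpha u(t)\rVert_\theta$ of the shape
\begin{equation}
\partial_t a_k \le C\,a_{k+1}\cdot\frac{1}{\langle t\rangle}
+ C\sum_{p\ge1}A_p\sum_{\substack{k_1+\dots+k_{2p+1}=k}}
\binom{k}{k_1,\dots,k_{2p+1}}^{\!s}\,a_{k_1}\cdots a_{k_{2p+1}},
\end{equation}
the $s$-th power on the multinomial coefficient being forced by the Gevrey weight; the first term, with its $1/\langle t\rangle$, is what will generate the singular prefactor $(\rho t)^{-(2m+|\alpha|)}$ in \eqref{equation:gevrey}.

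The main obstacle, as anticipated in the abstract's phrase ``fine summation formulae concerned with a matrix norm,'' is showing that this recursion is consistent with the ansatz $a_k(t)\le M(\rho t)^{-k}k!^{s}$ (spatial case; the $m!^{2s}$ in \eqref{equation:gevrey} then comes out of the time induction). Substituting the ansatz, the nonlinear sum becomes
\begin{equation}
\sum_{p\ge1}A_p\sum_{k_1+\dots+k_{2p+1}=k}
\binom{k}{k_1,\dots,k_{2p+1}}^{\!s}
\prod_{j=1}^{2p+1} M(\rho t)^{-k_j}k_j!^{\,s}
= (\rho t)^{-k}k!^{\,s}\sum_{p\ge1}A_p M^{2p+1}\,S_{p,k},
\end{equation}
where $S_{p,k}=\sum_{k_1+\dots+k_{2p+1}=k}\prod_j (k_j!/k!)^{s-1}\cdot\#\{\dots\}$ must be bounded by $(CR)^{-1}$-compatible constants \emph{uniformly in $k$}. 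For $s=1$ the inner sum is literally the number of compositions, $\binom{k+2p}{2p}\le 2^{k+2p}$, which the factor $R^{-(2p+1)}$ from $A_p$ absorbs once $\rho$ is chosen small; for $s>1$ one uses that $\sum_{k_1+\dots+k_r=k}\bigl(\prod k_j!/k!\bigr)^{s-1}$ is bounded by a constant to the power $k$ times a constant depending on $r$, which is precisely the ``fine summation formula'' — provable by viewing the sum as an entry of the $r$-fold convolution power of the sequence $(j!^{\,s-1}/\text{something})$ and estimating its $\ell^1$ (``matrix'') norm, or directly by the inequality $\prod k_j!\le k!$ when $s\ge1$. The bookkeeping to make the three contributions — the transport term $a_{k+1}/\langle t\rangle$, the commutator/linear error terms, and the nonlinear sum — all fit under a single Gronwall argument for $E_N$, with $T$, $M$, $\rho$ emerging independent of $N$, is where the bulk of the technical care goes; once $E_N$ is bounded for every $N$, \eqref{equation:gevrey} is read off by specializing the definition of $E_N$ to a single $(m,\alpha)$ and letting the $\langle t\rangle$ weight degenerate to $t$ away from $t=0$.
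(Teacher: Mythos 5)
Your overall architecture (uniform-in-$l$ Gevrey-weighted energy estimates closed by a Gronwall argument, the combinatorial inequality $\prod_j k_j!\leqslant k!$ to absorb the $s$-th powers of multinomial coefficients for $s\geqslant1$, and the final induction trading one time derivative for two space derivatives to produce $m!^{2s}$) does match the paper, and your guess about the factorial summation lemma is essentially Lemma~\ref{theorem:factorial}. But there is a genuine gap at the heart of the argument: the mechanism that produces the singular factor $(\rho t)^{-(2m+\lvert\alpha\rvert)}$. You propose to run the energy method directly on $v_\alpha=\langle x\rangle^{-\lvert\alpha\rvert}\p^\alpha u$ and claim the singularity is generated by a ``transport term $a_{k+1}/\langle t\rangle$'' in the recursion. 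No such term exists: commuting $\p^\alpha$ and the weight through $\p_t-i\Delta$ produces first-order commutator terms with no $1/\langle t\rangle$ structure, and an inequality of the form $\p_t a_k\leqslant C a_{k+1}/\langle t\rangle+\dotsb$ with a plus sign could not produce decay in any case. Worse, your functional $E_N$ cannot be initialized: at $t=0$ the weight $\langle t\rangle^{2m+\lvert\alpha\rvert}$ equals $1$, so $E_N(0)$ requires $\p^\alpha u_0\in H^\theta$ with weights, whereas $u_0$ is only in $H^\theta$ — the whole point of the theorem is that $u(t)$ is \emph{not} smooth at $t=0$.

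The missing idea is the Galilean vector field $J_k=x_k+2it\p_k$, which commutes with $\p_t-i\Delta$ and degenerates to multiplication by $x_k$ at $t=0$. The paper's energy method is run on the hierarchy $w^l={}^t[r^{\lvert\alpha\rvert}\alpha_\ast!^{-s}\langle D\rangle^\theta J^\alpha u,\,\overline{\cdots}\,]_{\lvert\alpha\rvert\leqslant l}$ (Sections \ref{section:linear}--\ref{section:energy}); the initial data $J^\alpha u(0)=x^\alpha u_0$ is exactly what the exponential decay hypothesis controls (Lemma~\ref{theorem:decay}), and the ``summation formulae concerned with a matrix norm'' (Lemma~\ref{theorem:summation2}) serve to bound the lower-triangular coefficient matrices of the resulting $2N\times 2N$ system uniformly in $l$, not to estimate convolution powers of factorial sequences as you surmise. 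The singular factor then appears only a posteriori, through the Hermite-polynomial identity $e^{-i\lvert x\rvert^2/4t}\p^\alpha e^{i\lvert x\rvert^2/4t}$ (Lemma~\ref{theorem:hermite}), which expresses $\p^\alpha u$ as a combination of $(2it)^{-\lvert\alpha-\beta\rvert}J^{\alpha-\beta}u$ times polynomial weights; the uniform bound $\sum_\alpha r^{\lvert\alpha\rvert}\lVert J^\alpha u(t)\rVert_\theta/\alpha!^s\leqslant M_0$ thus converts into $\lVert\langle x\rangle^{-\lvert\alpha\rvert}\p^\alpha u(t)\rVert_\theta\leqslant M_1\rho^{-\lvert\alpha\rvert}\lvert t\rvert^{-\lvert\alpha\rvert}\alpha!^s$. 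Only after that does your final induction on $m$ (which is correct in spirit and is carried out in Section~\ref{section:proof} with the functional $Y^l$) take over. Without introducing $J$ or an equivalent device, the proposed scheme cannot be started at $t=0$ and cannot produce the $t^{-(2m+\lvert\alpha\rvert)}$ blow-up rate.
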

Our condition on the Gevrey exponent does not seem to be optimal. 
Indeed, in \cite{hk} Hayashi and Kato studied the case $s=1/2$ 
for the equation of the form 
$$
\p_tu-i\Delta{u}=f(u), 
$$ 
and proved that the solution becomes real-analytic in 
$([-T,T]\setminus\{0\})\times\mathbb{R}^n$. 
Moreover, it is interesting that if 
$\exp(\ep\langle{x}\rangle^{1/s})u_0\in{H^\theta}$ 
for $s\geqslant1/2$, 
then $e^{it\Delta}u_0$ satisfies \eqref{equation:gevrey}.  
Using this fact, we can construct nonlinear equations 
whose solutions have the same regularity property. 
See Section~\ref{section:remark} for the detail. 
For more information about gain of regularity phenomenon of 
dispersive equations, see 
\cite{doi2}, \cite{doi3}, \cite{kajitani}, \cite{kw}, 
\cite{mns}, \cite{mrz}, \cite{rz1}, \cite{rz2}, \cite{szeftel}, 
\cite{takuwa} and references therein. 
\par
Our method of proof of Theorem~\ref{theorem:main} 
is basically due to the energy method developed in \cite{chihara}. 
We shall show the uniform bound of $\{w_l\}_{l=0,1,2,\dotsc}$, 
where  
$$
w_l
=
{}^t
[
r^{\lvert\alpha\rvert}\alpha!^{-s}
\langle{D}\rangle^\theta
J^\alpha{u},
\overline{
r^{\lvert\alpha\rvert}\alpha!^{-s}
\langle{D}\rangle^\theta
J^\alpha{u}
}
]_{\lvert\alpha\rvert\leqslant{l}}, 
$$
$$
J_k=x_k+2it\p_k, 
\quad
J=(J_1,\dotsc,J_n), 
$$
and $r$ is a positive constant. 
\eqref{equation:gevrey} immediately follows 
from the uniform bound of $\{w_l\}$ and the equation \eqref{equation:pde1}. 
\par
This paper is organized as follows. 
In Section~\ref{section:psdo} we present the elementary facts on 
pseudodifferential operators associated with nonlinearities. 
Section~\ref{section:preliminaries} is devoted to studying 
fine summation properties used in the uniform estimates later. 
In Section~\ref{section:linear} we refine the energy method 
for some linear systems in \cite{chihara}. 
Section~\ref{section:nonlinear} is devoted to the estimates of
nonlinearity in Gevrey classes. 
In Section~\ref{section:energy} we we obtain the uniform energy estimates. 
In Section~\ref{section:proof} we complete 
the proof of Theorem~\ref{theorem:main}. 
Finally, in Section~\ref{section:remark} we give an interesting example 
of semilinear Schr\"odinger equations related with the exponent $s=1/2$. 

\section{Pseudodifferential operators associated with nonlinear PDEs}
\label{section:psdo}
In this section we recall the Kato-Ponce commutator estimates 
established in \cite{kp}, and pseudodifferential calculus 
developed in \cite{chihara}. 
In addition we present some rough estimates associated with 
the Leibniz formula for pseudodifferential operators 
with constant coefficients. 
One can refer to \cite{cm} and \cite{taylor2} 
for the infromation related to this section. 
\par
Let $m$ be a real number. 
$S^m$ denotes the set of all smooth functions 
on $\mathbb{R}^n\times\mathbb{R}^n$ satisfying 
$$
\lvert\p_x^\beta\p_\xi^\alpha{p}(x,\xi)\rvert
\leqslant
C_{\alpha\beta}
\langle\xi\rangle^{m-\lvert\alpha\rvert}
$$
for any multi-indices $\alpha$ and $\beta$. 
For a symbol $p(x,\xi)$, 
a pseudodifferential operator $p(x,D)$ is defined by 
$$
p(x,D)u(x)
=
(2\pi)^{-n}
\iint_{\mathbb{R}^{2n}}
e^{i(x-y)\cdot\xi}p(x,\xi)u(y)dyd\xi,
$$
where $x\cdot\xi=x_1\xi_1+\dotsb+x_n\xi_n$. 
See \cite{hoermander}, \cite{kumanogo} and \cite{taylor} for the detail. 
We first recall pseudodifferential operators with nonsmooth coefficients 
and their properties needed later. 
Let $\sigma\geqslant0$. 
$\mathscr{B}^\sigma$ is the set of 
all $C^{[\sigma]}$-functions on $\mathbb{R}^n$ satisfying 
$$
\lVert{f}\rVert_{\mathscr{B}^\sigma}
=
\begin{cases}
\displaystyle\sup_{x\in\mathbb{R}^n}
\displaystyle\sum_{\lvert\alpha\rvert\leqslant\sigma}
\lvert\p^\alpha{f(x)}\rvert
<+\infty
&
\text{if}\quad
\sigma=0,1,2,\dotsc,
\\
\displaystyle\sup_{x{\in}X}
\displaystyle\sum_{\lvert\alpha\rvert\leqslant[\sigma]}
\lvert\p^\alpha{f(x)}\rvert
&
\\
\quad
+
\displaystyle\sup_{\substack{x,y{\in}X\\{x\ne{y}}}}
\displaystyle\sum_{\lvert\alpha\rvert=[\sigma]}
\frac{\lvert\p^\alpha{f(x)}-\p^\alpha{f(y)}\rvert}
     {\lvert{x-y}\rvert^{\sigma-[\sigma]}}
<+\infty
&
\text{otherwise.}
\end{cases}
$$
Similarly, $\mathscr{B}^\sigma{S^m}$ denotes 
the set of all functions on $\mathbb{R}^n\times\mathbb{R}^n$ satisfying 
$$
\lVert{p}\rVert_{\mathscr{B}^\sigma{S^m},l}
=
\sup_{\substack{\xi\in\mathbb{R}^n
                \\
                \lvert\alpha\rvert\leqslant{l}}}
\lVert
\langle\xi\rangle^{\lvert\alpha\rvert-m}
\p_\xi^\alpha{p(\cdot,\xi)}
\rVert_{\mathscr{B}^\sigma}
<+\infty
$$
for $l=0,1,2,\dotsc$. 
$\mathscr{S}$ denotes the set of Schwartz functions on $\mathbb{R}^n$, 
and $L^p$ denotes the usual Lebesgue space for all $p\in[1,\infty]$. 
In \cite{nagase} Nagase introduced larger classes of symbols, 
and proved the $L^p$ boundedness theorem 
by his symbol smoothing technique. 
We make full use of $L^2$-version of them. 
\begin{theorem}[Nagase {\cite[Theorem~A]{nagase}}]
\label{theorem:nagase} 
Let $q(x,\xi)$ be a function on $\mathbb{R}^n\times\mathbb{R}^n$. 
Suppose that there exist $\tau$ and $\lambda$ satisfying 
$0\leqslant\tau<\lambda\leqslant1$ 
such that 
\begin{align*}
  \lvert\p_\xi^\alpha{q(x,\xi)}\rvert
& \leqslant
  C_\alpha
  \langle\xi\rangle^{-\lvert\alpha\rvert},
\\
  \lvert\p_\xi^\alpha{q(x,\xi)}-\p_\xi^\alpha{q(y,\xi)}\rvert
& \leqslant
  C_\alpha
  \langle\xi\rangle^{-\lvert\alpha\rvert+\tau}
  \lvert{x-y}\rvert^\lambda 
\end{align*} 
for $\lvert\alpha\rvert\leqslant{n+1}$. 
Then 
$$
\lVert{q(x,D)u}\rVert_{L^2(\mathbb{R}^n)}
\leqslant
A(q)
\lVert{u}\rVert_{L^2(\mathbb{R}^n)}
$$
for any $u{\in}L^2(\mathbb{R}^n)$, 
where $A(q)$ depends only on 
\begin{align*}
& \sum_{\lvert\alpha\rvert\leqslant{n+1}}
  \sup_{x,\xi\in\mathbb{R}^n}
  (\langle\xi\rangle^{\lvert\alpha\rvert}
   \lvert\p_\xi^\alpha{q(x,\xi)}\rvert)
\\
& \quad
  +
  \sum_{\lvert\alpha\rvert\leqslant{n+1}}
  \sup_{\substack{x,y,\xi\in\mathbb{R}^n\\{x{\ne}y}}}
  \left(
  \langle\xi\rangle^{\lvert\alpha\rvert-\tau}
  \frac{\lvert\p_\xi^\alpha{q(x,\xi)}-\p_\xi^\alpha{q(y,\xi)}\rvert}
       {\lvert{x-y}\rvert^\lambda}
  \right).   
\end{align*}
\end{theorem}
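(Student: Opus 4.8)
The plan is to prove the estimate by Nagase's \emph{symbol smoothing} device: one splits $q$ into a part that behaves like a classical symbol of order $0$ and a remainder that is geometrically small on each dyadic frequency block, and then controls both pieces by a single elementary $L^2$ bound for symbols supported in one dyadic shell.

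First I would fix a Littlewood--Paley partition $1=\sum_{j\geqslant0}\varphi_j(\xi)$ with $\supp\varphi_0\subset\{\lvert\xi\rvert\leqslant2\}$ and $\supp\varphi_j\subset\{2^{j-1}\leqslant\lvert\xi\rvert\leqslant2^{j+1}\}$ for $j\geqslant1$, a companion family $\widetilde\varphi_j$ with $\widetilde\varphi_j\equiv1$ on $\supp\varphi_j$ and supported in a fixed dilate of that shell, and a mollifier $\chi\in\mathscr{S}$ with $\int\chi\,dx=1$ and $\hat\chi$ supported in $\{\lvert\zeta\rvert\leqslant1\}$; put $\chi_h(x)=h^{-n}\chi(x/h)$. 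Fixing $\rho$ in the interval $(\tau/\lambda,1)$ (nonempty since $\tau<\lambda$), set
\[
q_j^\#(x,\xi)=\varphi_j(\xi)\,(\chi_{2^{-j\rho}}\ast_xq)(x,\xi),\qquad
q_j^\flat(x,\xi)=\varphi_j(\xi)\,\bigl(q(x,\xi)-(\chi_{2^{-j\rho}}\ast_xq)(x,\xi)\bigr),
\]
so that $q=\sum_{j\geqslant0}(q_j^\#+q_j^\flat)$. The engine is the following remark: if $p(x,\xi)$ is supported in a shell $\{2^{j-1}\leqslant\lvert\xi\rvert\leqslant2^{j+1}\}$ and $2^{j\lvert\alpha\rvert}\lVert\p_\xi^\alpha p(\cdot,\xi)\rVert_{L^\infty}\leqslant B$ for all $\lvert\alpha\rvert\leqslant n+1$, then $\lVert p(x,D)\rVert_{L^2\to L^2}\leqslant C_nB$. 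This is seen by expanding $\xi\mapsto p(x,\xi)$ in a Fourier series over a cube of side $\sim2^j$ that contains the shell in its interior, estimating the coefficients $c_m(x)$ by integrating by parts $n+1$ times to get $\lvert c_m(x)\rvert\lesssim B(1+\lvert m\rvert)^{-(n+1)}$, and writing $p(x,D)=p(x,D)\widetilde\varphi_j(D)$ so that the series becomes a sum of modulated translations whose $L^2$ norms add up, since $\sum_{m\in\mathbb{Z}^n}(1+\lvert m\rvert)^{-(n+1)}<\infty$. This is precisely where the hypothesis $\lvert\alpha\rvert\leqslant n+1$ is used.

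Then I would apply the remark to both families. For $q_j^\#$ the first hypothesis alone yields $2^{j\lvert\alpha\rvert}\lVert\p_\xi^\alpha q_j^\#(\cdot,\xi)\rVert_{L^\infty}\lesssim B_\#$ uniformly in $j$, where $B_\#$ is the first of the two sums defining $A(q)$ (convolving in $x$ does not enlarge the $L^\infty_x$ norm of a $\xi$-derivative); moreover, since $\hat\chi$ is supported in $\{\lvert\zeta\rvert\leqslant1\}$ and $\rho<1$, the operator $q_j^\#(x,D)$ maps frequency $\sim2^j$ to frequency $\sim2^j$, so the blocks $q_j^\#(x,D)u$ have bounded overlap in frequency and $\lVert q^\#(x,D)u\rVert\lesssim B_\#\lVert u\rVert$, the finitely many small $j$'s being absorbed trivially. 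For $q_j^\flat$ I would write $q(x,\xi)-(\chi_{2^{-j\rho}}\ast_xq)(x,\xi)=\int\bigl(q(x,\xi)-q(x-y,\xi)\bigr)\chi_{2^{-j\rho}}(y)\,dy$ and apply the H\"older hypothesis to $\p_\xi^\alpha q$ to get $2^{j\lvert\alpha\rvert}\lVert\p_\xi^\alpha q_j^\flat(\cdot,\xi)\rVert_{L^\infty}\lesssim B_\flat\,2^{-j(\rho\lambda-\tau)}$ for $\lvert\alpha\rvert\leqslant n+1$, with $B_\flat$ the second sum defining $A(q)$ and $\rho\lambda-\tau>0$. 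The remark gives $\lVert q_j^\flat(x,D)\rVert_{L^2\to L^2}\lesssim B_\flat\,2^{-j(\rho\lambda-\tau)}$, and since $q_j^\flat(x,D)=q_j^\flat(x,D)\widetilde\varphi_j(D)$, the Cauchy--Schwarz inequality in $j$ together with the Littlewood--Paley square-function estimate gives $\lVert q^\flat(x,D)u\rVert\lesssim B_\flat\lVert u\rVert$. Adding the two contributions proves the estimate with $A(q)\lesssim B_\#+B_\flat$.

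The main obstacle is the elementary remark on single-shell symbols: one must produce an $L^2$ bound with no smoothness of $p$ in $x$ available, so the usual route --- integrate by parts in $\xi$, estimate the kernel, apply Schur's test --- breaks down because the kernel does not decay in the spatial variable. The Fourier-expansion-in-$\xi$ argument circumvents this, and making it run with exactly $n+1$ $\xi$-derivatives, so that the constant depends only on the seminorms allowed in $A(q)$, is the delicate point. The rest is bookkeeping: verifying that the two sums in $A(q)$ dominate $B_\#$ and $B_\flat$, and that the splitting respects the dyadic structure.
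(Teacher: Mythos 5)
The paper does not prove this theorem---it is quoted from Nagase's work, where it is established precisely by the symbol-smoothing technique you reconstruct (dyadic decomposition, mollification of the symbol in $x$ at scale $2^{-j\rho}$ with $\tau/\lambda<\rho<1$, and the single-shell $L^2$ lemma via Fourier expansion in $\xi$ using $n+1$ derivatives). Your argument is correct and follows essentially the same route as the cited source.
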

Combining Nagase's idea and results, and 
well-known facts on smooth symbols, 
one can obtain the fundamental theorem for algebra 
and the sharp G{\aa}rding inequality. 
\begin{lemma}[Chihara {\cite[Lemma~2]{chihara}}]
\label{theorem:algebra} 
Let $\sigma>1$. 
If $p_j(x,\xi)\in\mathscr{B}^\sigma{S^j}$ for $j=0,1$, 
then 
\begin{equation}
p_0(x,D)p_1(x,D)
\equiv
p_1(x,D)p_0(x,D)
\equiv
q(x,D),
\label{equation:product} 
\end{equation}
\begin{equation}
p_1(x,D)^\ast
\equiv
r(x,D) 
\label{equation:adjoint}
\end{equation}
modulo $L^2$-bounded operators, 
where 
$q(x,\xi)=p_0(x,\xi)p_1(x,\xi)$ 
and 
$r(x,\xi)=\bar{p}_1(x,\xi)$. 
More precisely, there exist a positive integer $\nu$ and $C>0$ such that 
for any $u{\in}L^2$ 
\begin{align*}
  \lVert(p_0(x,D)p_1(x,D)-q(x,D))u\rVert
& \leqslant
  C
  \lVert{p_0}\rVert_{\mathscr{B}^\sigma{S^0},\nu}
  \lVert{p_1}\rVert_{\mathscr{B}^\sigma{S^1},\nu}
  \lVert{u}\rVert,
\\
  \lVert(p_1(x,D)p_0(x,D)-q(x,D))u\rVert
& \leqslant
  C
  \lVert{p_0}\rVert_{\mathscr{B}^\sigma{S^0},\nu}
  \lVert{p_1}\rVert_{\mathscr{B}^\sigma{S^1},\nu}
  \lVert{u}\rVert,
\\
  \lVert(p_1(x,D)^\ast-r(x,D))u\rVert
& \leqslant
  C
  \lVert{p_1}\rVert_{\mathscr{B}^\sigma{S^1},\nu}
  \lVert{u}\rVert.
\end{align*}
\end{lemma}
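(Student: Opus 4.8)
\emph{Sketch of proof.} The plan is to combine the classical symbolic calculus for smooth symbols with Theorem~\ref{theorem:nagase} by symbol smoothing, as indicated just before the statement; the relevant machinery is in \cite{taylor2}. Fix $\delta$ with $1/\sigma<\delta<1$, which is possible because $\sigma>1$. For each of $p=p_0,p_1$ (of order $j=0$ and $j=1$ respectively) I would decompose the symbol dyadically in $\xi$ and mollify the $k$-th frequency block in $x$ at scale $2^{-\delta k}$, obtaining $p=p^\#+p^\flat$ with the following properties, whose constants are controlled by finitely many of the seminorms $\lVert p\rVert_{\mathscr{B}^\sigma S^j,l}$:
\[
  \lvert\partial_x^\beta\partial_\xi^\alpha p^\#(x,\xi)\rvert
  \leqslant
  C_{\alpha\beta}\,\langle\xi\rangle^{j-\lvert\alpha\rvert+\delta(\lvert\beta\rvert-\sigma)_+},
  \qquad
  \partial_\xi^\alpha p^\flat\in\mathscr{B}^\sigma S^{j-\sigma\delta-\lvert\alpha\rvert}.
\]
In particular $p^\#\in S^j_{1,\delta}$, and since $\sigma\delta>1$ the remainders $p_0^\flat,p_1^\flat$ have strictly negative order. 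Because right composition with a Fourier multiplier $\langle D\rangle^k$ is exact at the level of symbols, and because Theorem~\ref{theorem:nagase} applies (with $\tau=0$, $\lambda=1$) to any $\mathscr{B}^\sigma S^{m'}$-symbol of order $m'\leqslant0$ when $\sigma>1$, each of $p_j(x,D)$ and $p_j^\flat(x,D)$ is $L^2$-bounded, while $p_j^\#(x,D)$ is $L^2$-bounded by the Calder\'on--Vaillancourt theorem for $S^0_{1,\delta}$ with $\delta<1$.

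For \eqref{equation:product} I would expand $p_0(x,D)p_1(x,D)$ into the four terms $p_0^\#(x,D)p_1^\#(x,D)$, $p_0^\#(x,D)p_1^\flat(x,D)$, $p_0^\flat(x,D)p_1^\#(x,D)$, $p_0^\flat(x,D)p_1^\flat(x,D)$, and $q=p_0p_1$ into $p_0^\#p_1^\#+p_0^\#p_1^\flat+p_0^\flat p_1^\#+p_0^\flat p_1^\flat$, and compare piece by piece. Each of the three mixed products $p_0^\#p_1^\flat$, $p_0^\flat p_1^\#$, $p_0^\flat p_1^\flat$ is a symbol in $\mathscr{B}^\sigma S^{m'}$ with $m'\leqslant1-\sigma\delta<0$, so $(p_0^\#p_1^\flat)(x,D)$, $(p_0^\flat p_1^\#)(x,D)$, $(p_0^\flat p_1^\flat)(x,D)$ are $L^2$-bounded by Theorem~\ref{theorem:nagase}; and the corresponding operator products are $L^2$-bounded, being compositions of $L^2$-bounded operators (for $p_0^\flat(x,D)p_1^\#(x,D)$ one factors it as $[p_0^\flat(x,D)\langle D\rangle]\,[\langle D\rangle^{-1}p_1^\#(x,D)]$, where the first factor has symbol $\langle\xi\rangle p_0^\flat(x,\xi)\in\mathscr{B}^\sigma S^{1-\sigma\delta}$ and is bounded by Theorem~\ref{theorem:nagase}, and the second is bounded because $p_1^\#(x,D)$ maps $L^2$ into $H^{-1}$ and $\langle D\rangle^{-1}$ maps $H^{-1}$ into $L^2$; this is where $\sigma\delta>1$ is used). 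Hence the three differences are $L^2$-bounded, with the claimed quantitative bounds. For the purely smooth term, the standard calculus in $S^m_{1,\delta}$ gives, modulo an $L^2$-bounded operator, that $p_0^\#(x,D)p_1^\#(x,D)-(p_0^\#p_1^\#)(x,D)$ has symbol $\sum_{1\leqslant\lvert\alpha\rvert<N}\frac{1}{\alpha!}\partial_\xi^\alpha p_0^\#\,D_x^\alpha p_1^\#$ plus a remainder of arbitrarily negative order as $N\to\infty$; by the refined bound on $p_1^\#$ the $\alpha$-term lies in $S^{1-\lvert\alpha\rvert+\delta(\lvert\alpha\rvert-\sigma)_+}_{1,\delta}$, whose order is $0$ for $\lvert\alpha\rvert=1$ (since $\sigma>1$) and $<0$ for $\lvert\alpha\rvert\geqslant2$ (since $\sigma>1$, $\delta<1$), so each such term yields an $L^2$-bounded operator. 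Reassembling the four pieces gives $p_0(x,D)p_1(x,D)\equiv q(x,D)$, and exchanging the roles of $p_0$ and $p_1$ gives $p_1(x,D)p_0(x,D)\equiv q(x,D)$.

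For \eqref{equation:adjoint} I would run the same argument with $p_1=p_1^\#+p_1^\flat$: the operator $p_1^\flat(x,D)^\ast$ is $L^2$-bounded because $p_1^\flat(x,D)$ is, $\overline{p_1^\flat}(x,D)$ is $L^2$-bounded because its order is $1-\sigma\delta<0$, and the standard calculus shows that $p_1^\#(x,D)^\ast-\overline{p_1^\#}(x,D)$ has symbol $\sum_{1\leqslant\lvert\alpha\rvert<N}\frac{1}{\alpha!}\partial_\xi^\alpha D_x^\alpha\overline{p_1^\#}$ modulo a remainder of negative order, with each term in $S^{1-\lvert\alpha\rvert+\delta(\lvert\alpha\rvert-\sigma)_+}_{1,\delta}$ of order $\leqslant0$ exactly as before; adding the pieces yields $p_1(x,D)^\ast\equiv r(x,D)$. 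The delicate point throughout is that the argument must survive under the hypothesis $\sigma>1$ alone --- barely more than one $x$-derivative --- which is precisely what dictates the refined $S^j_{1,\delta}$ estimates for the smoothed symbols, the choice $1/\sigma<\delta<1$, and the systematic reduction of mixed smooth/rough compositions to $L^2$ boundedness via right multiplication by $\langle D\rangle^k$ together with Theorem~\ref{theorem:nagase}. Finally, since symbol smoothing is linear in the symbol and Theorem~\ref{theorem:nagase} invokes only $\leqslant n+1$ seminorms, bookkeeping all constants yields some finite $\nu$ and the asserted bilinear estimates.
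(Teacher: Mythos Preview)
The paper does not actually prove this lemma here; it is quoted from \cite[Lemma~2]{chihara}, and the only hint given is the sentence preceding the statement: ``Combining Nagase's idea and results, and well-known facts on smooth symbols, one can obtain the fundamental theorem for algebra and the sharp G{\aa}rding inequality.'' Your sketch is precisely that combination --- Nagase's symbol smoothing $p=p^\#+p^\flat$ with $p^\#\in S^j_{1,\delta}$ and $p^\flat\in\mathscr{B}^\sigma S^{j-\sigma\delta}$ for $1/\sigma<\delta<1$, followed by classical $S^m_{1,\delta}$ calculus on the smooth pieces and Theorem~\ref{theorem:nagase} on the rough ones --- so your approach coincides with what the paper indicates and with the standard proof in \cite{chihara} and \cite{taylor2}.
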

\begin{lemma}[Chihara {\cite[Lemma~3]{chihara}}]
\label{theorem:garding} 
Suppose that 
$p(x,\xi)=[p_{ij}(x,\xi)]_{i,j=1,\dotsc,l}$ 
is an $l{\times}l$ matrix whose entries belong to 
$\mathscr{B}^2S^1$, and that 
$$
p(x,\xi)+p(x,\xi)^\ast\geqslant0
$$
for $\lvert\xi\rvert\geqslant{R}$ with some $R>0$. 
Then there exists $C_1>0$ which is independent of $l$, such that 
for any $u\in(\mathscr{S})^l$ 
$$
\re(p(x,D)u,u)
\geqslant
-C_1A(p)\lVert{u}\rVert^2, 
$$
where 
$$
A(p)=\sup_{\substack{X\in\mathbb{C}^l
                     \\
                     \lvert{X}\rvert=1}}
     \lvert{}^tXPX\rvert,
\quad
P
=
[\lVert{p_{ij}}\rVert_{\mathscr{B}^2S^1,\nu}]_{i,j=1,\dotsc,l},
$$
and $\nu$ is some positive integer. 
\end{lemma}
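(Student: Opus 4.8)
**Proof proposal for Lemma~\ref{theorem:garding} (matrix-valued sharp G{\aa}rding inequality with dimension-independent constant).**

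The plan is to reduce the matrix-valued estimate to the scalar sharp G{\aa}rding inequality that is already available via Nagase's theorem (Theorem~\ref{theorem:nagase}) and the symbol calculus of Lemma~\ref{theorem:algebra}, and then to carry out the bookkeeping carefully so that the constant $C_1$ depends only on $A(p)$ and not on the size $l$ of the matrix. First I would introduce the scalar auxiliary symbol
\begin{equation*}
\phi(x,\xi)=\chi(\xi)\,{}^tX\,p(x,\xi)\,\bar X,
\qquad X\in\mathbb{C}^l,\ \lvert X\rvert=1,
\end{equation*}
where $\chi\in S^0$ is a smooth cut-off equal to $1$ for $\lvert\xi\rvert\geqslant 2R$ and supported in $\lvert\xi\rvert\geqslant R$. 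The hypothesis $p+p^\ast\geqslant0$ for $\lvert\xi\rvert\geqslant R$ forces $\re\phi(x,\xi)\geqslant0$ everywhere, so the classical scalar sharp G{\aa}rding inequality applies to $\phi(x,D)$, giving $\re(\phi(x,D)v,v)\geqslant -c\,\lVert v\rVert^2$ for scalar $v$, with $c$ controlled by finitely many seminorms of $\phi$ in $\mathscr{B}^2S^1$; by Lemma~\ref{theorem:algebra} and the definition of the matrix $P$, each such seminorm is bounded by $\sum_{i,j}\lvert X_i\rvert\lvert X_j\rvert\lVert p_{ij}\rVert_{\mathscr{B}^2S^1,\nu}\leqslant {}^t\tilde X P\tilde X\leqslant A(p)$, where $\tilde X=(\lvert X_1\rvert,\dots,\lvert X_l\rvert)$ — this is exactly the point at which the quadratic-form quantity $A(p)$ enters and replaces the naive bound $l^2\max_{i,j}\lVert p_{ij}\rVert$.

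The next step is to pass from the scalar estimate, valid for each fixed unit vector $X$, to the vector estimate for $u={}^t[u_1,\dots,u_l]\in(\mathscr{S})^l$. The standard device here is a $\mathbb{C}^l$-vector-valued sharp G{\aa}rding inequality: one treats $p(x,\xi)$ as a symbol taking values in the bounded operators on $\mathbb{C}^l$ and runs the Friedrichs-symmetrization / Fefferman--Phong-type construction of the standard proof, but \emph{fibrewise in the matrix variable}. Concretely, the Friedrichs approximation $\tilde p(x,D)$ of $p(x,D)$ satisfies $\re(\tilde p(x,D)u,u)\geqslant0$ directly from $p+p^\ast\geqslant0$ on the relevant frequency range (plus the cut-off correction), while the remainder $p(x,D)-\tilde p(x,D)$ is bounded on $(L^2)^l$ with norm estimated — using Nagase's Theorem~\ref{theorem:nagase} applied to the scalar symbols $\xi\mapsto {}^tX(p-\tilde p)\bar X$ uniformly over $\lvert X\rvert=1$ — by a constant times $A(p)$. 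The key observation making the constant $l$-independent is that Theorem~\ref{theorem:nagase} only involves $\xi$-derivatives up to order $n+1$ and suprema over $x$, so applying it to the one-parameter family of scalar symbols indexed by $X$ and then taking the supremum over $X$ produces exactly the quantity $A(p)=\sup_{\lvert X\rvert=1}\lvert {}^tXPX\rvert$ rather than any power of $l$.

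Handling the low-frequency region $\lvert\xi\rvert\leqslant R$ and the transition zone is routine: there $\langle\xi\rangle^{-1}\asymp 1$, so $p(x,\xi)$ restricted to $\lvert\xi\rvert\leqslant 2R$ together with its derivatives is bounded, and the corresponding operator is $(L^2)^l$-bounded with norm $\lesssim A(p)$ by the same argument; this contributes only to the constant $C_1$. The main obstacle — and the step I would spend the most care on — is precisely the uniformity in $l$: one must verify that every constant produced along the way (the Friedrichs remainder, the commutator errors from Lemma~\ref{theorem:algebra}, the low-frequency term) is bounded by a \emph{universal} constant times $A(p)$, with no hidden dependence on the matrix size entering through counting the $l^2$ entries of $p$. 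The clean way to guarantee this is to never estimate entrywise but always to test against unit vectors $X\in\mathbb{C}^l$ and use the elementary inequality $\bigl\lvert\sum_{i,j}X_i\,a_{ij}\,\bar X_j\bigr\rvert\leqslant {}^t\tilde X\,[\lvert a_{ij}\rvert]\,\tilde X$ to convert all such sums into the quadratic form defining $A(p)$; once this principle is applied consistently, the scalar machinery goes through verbatim and yields the stated inequality with $C_1$ depending only on $n$.
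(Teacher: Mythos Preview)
The paper does not actually prove this lemma: it is quoted as Lemma~3 of the author's earlier article \cite{chihara}, with no argument reproduced here, so there is no in-paper proof to compare against. The relevant question is only whether your sketch is internally sound.

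Your overall strategy --- matrix-valued Friedrichs symmetrization to secure $\re(\tilde p(x,D)u,u)\geqslant0$ from the symbol hypothesis, and then control of the order-zero remainder $r=p-\tilde p$ via Nagase's Theorem~\ref{theorem:nagase}, with the entrywise bounds assembled into the quadratic form $A(p)$ --- is the standard and correct route to a dimension-independent sharp G{\aa}rding inequality. The one place where your write-up wobbles is the scalar-to-vector passage in the second paragraph: knowing $\re(\phi_X(x,D)v,v)\geqslant -cA(p)\lVert v\rVert^2$ for every fixed unit vector $X\in\mathbb{C}^l$ and every \emph{scalar} function $v$ does not by itself give the estimate for $(p(x,D)u,u)$ with vector-valued $u$, because the components $u_1,\dotsc,u_l$ are in general independent functions, not scalar multiples of a single $v$. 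What does work --- and is really what your final paragraph is pointing at --- is the direct entrywise route: each remainder entry $r_{ij}$ lies in $\mathscr{B}^\sigma S^0$ with seminorm $\lesssim P_{ij}=\lVert p_{ij}\rVert_{\mathscr{B}^2S^1,\nu}$, so Nagase gives $\lVert r_{ij}(x,D)\rVert_{L^2\to L^2}\lesssim P_{ij}$, and then
\[
\bigl\lvert\re(r(x,D)u,u)\bigr\rvert
\leqslant\sum_{i,j}\lVert r_{ij}(x,D)\rVert\,\lVert u_j\rVert\,\lVert u_i\rVert
\lesssim\sum_{i,j}P_{ij}\,\lVert u_i\rVert\,\lVert u_j\rVert
\leqslant A(p)\,\lVert u\rVert^2,
\]
the last inequality being exactly the definition of $A(p)$ applied to the unit vector $(\lVert u_1\rVert,\dotsc,\lVert u_l\rVert)/\lVert u\rVert$. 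With that clarification the argument goes through, and the resulting constant depends only on $n$ (through the symmetrization and Nagase's theorem), not on $l$.
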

Loosely speaking, 
Theorem~\ref{theorem:nagase}, 
Lemma~\ref{theorem:algebra} 
and 
Lemma~\ref{theorem:garding} 
allow us to deal with 
$p(x,\xi)\in\mathscr{B}^2S^m$ ($m=0,1$) 
as if it belonged to $S^m$. 
Now, let us consider commutator estimates of 
pseudodifferential operators with constant coefficients. 
First we recall the Kato-Ponce commutator estimates. 
\begin{theorem}[Kato and Ponce {\cite[Lemma~X1]{kp}}]
\label{theorem:kato-ponce} 
If $\theta>0$, then for any $f,g\in\mathscr{S}$ 
\begin{equation}
\lVert
\langle{D}\rangle^\theta(fg)
-
f\langle{D}\rangle^\theta{g}
\rVert
\leqslant
C
(
\lVert\p{f}\rVert_{L^\infty}
\lVert{g}\rVert_{\theta-1}
+
\lVert{f}\rVert_\theta
\lVert{g}\rVert_{L^\infty}
).
\label{equation:kato-ponce}
\end{equation}
\end{theorem}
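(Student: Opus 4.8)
The plan is to pass to the Fourier side and read the commutator off a symbol. Taking Fourier transforms and using $\widehat{fg}=c\,\hat f\ast\hat g$, the function $\langle D\rangle^\theta(fg)-f\langle D\rangle^\theta g$ has Fourier transform
$$c\int\bigl(\langle\xi\rangle^\theta-\langle\eta\rangle^\theta\bigr)\hat f(\xi-\eta)\,\hat g(\eta)\,d\eta,$$
so the whole estimate is governed by the symbol $\langle\xi\rangle^\theta-\langle\eta\rangle^\theta$, in which $\xi-\eta$ is the frequency of $f$ and $\eta$ that of $g$. I would run a Bony paraproduct decomposition, writing $fg=\sum_jS_{j-2}f\,\Delta_jg+\sum_jS_{j-2}g\,\Delta_jf+\sum_j\Delta_jf\,\widetilde\Delta_jg$ together with the analogous decomposition of $f\langle D\rangle^\theta g$. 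This splits the commutator into a $g$-high part $\sum_j[\langle D\rangle^\theta,S_{j-2}f]\Delta_jg$, an $f$-high part $\sum_j\bigl(\langle D\rangle^\theta(S_{j-2}g\,\Delta_jf)-S_{j-2}(\langle D\rangle^\theta g)\,\Delta_jf\bigr)$, and a high--high part $\langle D\rangle^\theta R(f,g)-R(f,\langle D\rangle^\theta g)$, with $R(f,g)=\sum_j\Delta_jf\,\widetilde\Delta_jg$.

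The $g$-high part is the heart of the matter, and the only place the commutator structure is exploited. There the frequency of $f$ is $\lesssim2^{j-2}$ while that of $g$ is $\sim2^j$, so $\lvert\langle\xi\rangle^\theta-\langle\eta\rangle^\theta\rvert\lesssim\lvert\xi-\eta\rvert\,\langle\eta\rangle^{\theta-1}$ by the mean value theorem, and the symbolic calculus shows that, on frequencies $\sim2^j$, $[\langle D\rangle^\theta,S_{j-2}f]$ behaves like an operator of order $\theta-1$ whose coefficients are built from derivatives of $S_{j-2}f$. The decisive point — and the one that needs care — is to keep the slowly varying factor $S_{j-2}f$ in physical space, so that each coefficient obeys $\lVert\p^\alpha S_{j-2}f\rVert_{L^\infty}\lesssim2^{(\lvert\alpha\rvert-1)j}\lVert\p f\rVert_{L^\infty}$; since $\p_\xi^\alpha\langle\xi\rangle^\theta\sim2^{(\theta-\lvert\alpha\rvert)j}$ on $\lvert\xi\rvert\sim2^j$, every term of the symbolic expansion (and, after truncation at a high order, its remainder) carries the weight $2^{(\theta-1)j}\lVert\p f\rVert_{L^\infty}$, so that $\lVert[\langle D\rangle^\theta,S_{j-2}f]\Delta_jg\rVert\lesssim\lVert\p f\rVert_{L^\infty}\,\lVert\langle D\rangle^{\theta-1}\Delta_jg\rVert$. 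The summands are frequency-localized near $2^j$, so summing in $j$ by almost orthogonality yields the term $\lVert\p f\rVert_{L^\infty}\lVert g\rVert_{\theta-1}$ of \eqref{equation:kato-ponce}; note that without the cancellation one would be forced to measure $g$ in $H^\theta$ rather than in $H^{\theta-1}$.

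For the $f$-high and high--high parts no cancellation is needed. In the $f$-high part I would estimate $\langle D\rangle^\theta(S_{j-2}g\,\Delta_jf)$ and $S_{j-2}(\langle D\rangle^\theta g)\,\Delta_jf$ separately, using $\lVert\langle D\rangle^\theta u\rVert\lesssim2^{j\theta}\lVert u\rVert$ for $u$ of frequency $\lesssim2^j$ and the Bernstein bound $\lVert S_{j-2}(\langle D\rangle^\theta g)\rVert_{L^\infty}\lesssim2^{j\theta}\lVert g\rVert_{L^\infty}$, and then summing by almost orthogonality to get $\lVert f\rVert_\theta\lVert g\rVert_{L^\infty}$. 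The high--high remainder is trickier, because its summands $\Delta_jf\,\widetilde\Delta_jg$ are supported only in \emph{balls} of radius $\sim2^j$ and hence are not almost orthogonal; here I would use the square-function description of $H^\theta$. Estimating each dyadic block by the triangle inequality gives $2^{l\theta}\lVert\Delta_lR(f,g)\rVert\lesssim\lVert g\rVert_{L^\infty}\sum_{j\geqslant l-C}2^{(l-j)\theta}\bigl(2^{j\theta}\lVert\Delta_jf\rVert\bigr)$, which is the convolution of the sequence $\{2^{j\theta}\lVert\Delta_jf\rVert\}_j\in\ell^2$ (of norm $\sim\lVert f\rVert_\theta$) with the kernel $2^{-m\theta}\mathbf{1}_{\{m\geqslant-C\}}$; the latter lies in $\ell^1$ precisely because $\theta>0$, so Young's inequality places $\{2^{l\theta}\lVert\Delta_lR(f,g)\rVert\}_l$ in $\ell^2$, i.e.\ $\lVert R(f,g)\rVert_\theta\lesssim\lVert f\rVert_\theta\lVert g\rVert_{L^\infty}$, and the same goes for $R(f,\langle D\rangle^\theta g)$.

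The one genuine obstacle is the $g$-high term. A purely Fourier-side estimate of the integral above (via Young's or the Hausdorff--Young inequality) only returns the Fourier--Lebesgue norm $\lVert\widehat{\p f}\rVert_{L^1}$, which is strictly stronger than the $\lVert\p f\rVert_{L^\infty}$ demanded by \eqref{equation:kato-ponce}; extracting $\lVert\p f\rVert_{L^\infty}$ forces one to keep $S_{j-2}f$ on the physical side and balance its (a priori growing) higher derivatives against the decay of $\p_\xi^\alpha\langle\xi\rangle^\theta$ on the frequency support of $\Delta_jg$. The remaining care points are routine: using the $\ell^2$ square-function description of $H^\theta$ rather than the crude bound $\lVert\Delta_lR\rVert\leqslant\lVert R\rVert$ in the remainder term, and observing that the hypothesis $\theta>0$ is precisely what keeps the convolution kernel above summable.
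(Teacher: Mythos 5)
This statement is not proved in the paper at all: it is quoted verbatim from Kato--Ponce \cite[Lemma~X1]{kp}, and the author only remarks that the original reference proves an $L^p$ version. So there is no in-paper argument to compare against, and your proposal has to be judged on its own and against the classical proof. On its own terms it is correct: the Bony decomposition of both $fg$ and $f\langle D\rangle^\theta g$, the isolation of the low-high term $\sum_j[\langle D\rangle^\theta,S_{j-2}f]\Delta_jg$ as the only place where cancellation is used, the commutator bound $\lVert[\langle D\rangle^\theta,S_{j-2}f]\Delta_jg\rVert\lesssim\lVert\p f\rVert_{L^\infty}2^{(\theta-1)j}\lVert\Delta_jg\rVert$ (whether obtained by the symbolic expansion you describe or, more directly, from the kernel identity $[P_j(D),a]u(x)=\int K_j(x-y)(a(y)-a(x))u(y)\,dy$ with $\lVert\,|z|K_j\rVert_{L^1}\lesssim2^{(\theta-1)j}$), the Bernstein bounds in the high-low term, and the $\ell^1\ast\ell^2$ treatment of the ball-supported remainder, which is exactly where $\theta>0$ enters. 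This is the standard modern paraproduct proof and it delivers precisely the right-hand side of \eqref{equation:kato-ponce}. You are also right to flag that a naive Fourier-side estimate, such as the one the paper itself uses to prove its Lemma~\ref{theorem:leibniz} (mean value theorem on $p(\xi+\eta)-p(\xi)-p(\eta)$ plus Young's inequality), can only produce Sobolev or Fourier--Lebesgue norms of $f$, never $\lVert\p f\rVert_{L^\infty}$; that is why the paper keeps the two lemmas separate. The original Kato--Ponce argument reaches the same conclusion by a different route: a direct frequency splitting combined with the Coifman--Meyer theorem on bilinear multipliers, which handles the low-high cancellation wholesale rather than block by block. Your approach is more self-contained (it needs only Bernstein inequalities and almost orthogonality), while theirs transfers immediately to $L^p$, $1<p<\infty$, which is what they actually state.
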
  
Here we remark that Kato and Ponce actually proved 
$L^p$-version of \eqref{equation:kato-ponce}. 
Next we give the Leibniz formula for Fourier multipliers. 
\begin{lemma}
\label{theorem:leibniz}
Let $k=2,3,4,\dotsc$, and let $m\geqslant1$ and $\theta>n/2+1$. 
If $p(\xi)\in{S^m}$, then there exists a positive constant 
$C_{m,\theta}$ which is independent of $k$, such that 
for any $f_1,\dotsc,f_k\in\mathscr{S}$ 
\begin{equation}
\lVert{p(D)}(\prod_{j=1}^kf_j)\rVert
\leqslant
C_{m,\theta}^k
\sum_{\nu=1}^k
\lVert{f_\nu}\rVert_{m}
\prod_{\substack{j=1\\ j\ne\nu}}^k
\lVert{f_j}\rVert_{\theta-1},
\label{equation:chain} 
\end{equation}
\begin{equation}
\lVert
p(D)(\prod_{j=1}^kf_j)
-
\sum_{\nu=1}^k
\prod_{\substack{j=1\\ j\ne\nu}}^kf_j
p(D)f_\nu
\rVert
\leqslant
C_{m,\theta}^k
\sum_{\nu=1}^k
\lVert{f_\nu}\rVert_{m-1}
\prod_{\substack{j=1\\ j\ne\nu}}^k
\lVert{f_j}\rVert_\theta.
\label{equation:leibniz}
\end{equation}
\end{lemma}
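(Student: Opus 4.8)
The plan is to deduce both inequalities \eqref{equation:chain} and \eqref{equation:leibniz} from the Kato--Ponce commutator estimate \eqref{equation:kato-ponce} together with the algebra property of $H^{\theta-1}$ for $\theta-1>n/2$, by induction on the number of factors $k$. The underlying principle is that $\langle D\rangle^m$ (or more generally $p(D)$ with $p\in S^m$) acting on a product essentially falls on one factor at a time, the error being controlled by a commutator that costs one derivative; since $\theta-1>n/2$, the Sobolev space $H^{\theta-1}$ is a Banach algebra, so the ``passive'' factors can be multiplied together freely at the cost of one constant per multiplication, and this is exactly what produces the $C_{m,\theta}^k$ dependence on $k$.

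First I would establish the Banach algebra estimate $\lVert fg\rVert_{\theta-1}\leqslant C_\theta\lVert f\rVert_{\theta-1}\lVert g\rVert_{\theta-1}$ for $\theta-1>n/2$, and note the elementary embedding $\lVert f\rVert_{L^\infty}\leqslant C\lVert f\rVert_{\theta-1}$. Then for \eqref{equation:chain}, I would reduce to $p(\xi)=\langle\xi\rangle^m$ up to an $L^2$-bounded correction (since $p(\xi)\langle\xi\rangle^{-m}\in S^0$ gives an $L^2$-bounded multiplier, absorbing its operator norm into $C_{m,\theta}$), and then write $\langle D\rangle^m(\prod_j f_j)=\langle D\rangle^{m}(f_1\cdot\prod_{j\geqslant2}f_j)$; applying \eqref{equation:kato-ponce} with $\theta$ replaced by $m$ peels off $f_1$, yielding a main term $f_1\langle D\rangle^m(\prod_{j\geqslant2}f_j)$ plus an error bounded by $C(\lVert\partial f_1\rVert_{L^\infty}\lVert\prod_{j\geqslant2}f_j\rVert_{m-1}+\lVert f_1\rVert_m\lVert\prod_{j\geqslant2}f_j\rVert_{L^\infty})$. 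One then estimates $\lVert\prod_{j\geqslant2}f_j\rVert_{m-1}$ and $\lVert\prod_{j\geqslant2}f_j\rVert_{L^\infty}$ by iterating the algebra inequality (here one uses $m-1\geqslant0$, so $H^{m-1}\hookleftarrow$ products behave well once $m-1\leqslant\theta-1$, i.e.\ one controls the $H^{m-1}$ norm of a product by $\lVert f_{j_0}\rVert_{m-1}\prod_{j\neq j_0}\lVert f_j\rVert_{\theta-1}$ via repeated use of the bilinear estimate $\lVert uv\rVert_{m-1}\leqslant C(\lVert u\rVert_{m-1}\lVert v\rVert_{\theta-1}+\lVert u\rVert_{\theta-1}\lVert v\rVert_{m-1})$ valid since $\theta-1>n/2$), and then induct on the main term $f_1\langle D\rangle^m(\prod_{j\geqslant2}f_j)$, using $\lVert f_1 g\rVert\leqslant\lVert f_1\rVert_{L^\infty}\lVert g\rVert\leqslant C\lVert f_1\rVert_{\theta-1}\lVert g\rVert$. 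Tracking constants, each recursion step multiplies by a fixed constant depending only on $m,\theta,n$, giving the geometric factor $C_{m,\theta}^k$; the sum over which factor receives the $\lVert\cdot\rVert_m$ norm produces $\sum_{\nu=1}^k$.

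For \eqref{equation:leibniz} the strategy is the same but one keeps track of the ``first-order Leibniz'' main terms $\sum_\nu(\prod_{j\neq\nu}f_j)\,p(D)f_\nu$ rather than discarding them. Writing $g=\prod_{j\geqslant2}f_j$ and applying \eqref{equation:kato-ponce} gives $\langle D\rangle^m(f_1 g)=f_1\langle D\rangle^m g+g\langle D\rangle^m f_1+E_1$ after symmetrizing (or, more carefully, $\langle D\rangle^m(f_1g)-f_1\langle D\rangle^mg-g\langle D\rangle^mf_1$ is the fully symmetric remainder, bounded by $C\lVert f_1\rVert_\theta\lVert g\rVert_{m-1}+\cdots$ of the stated Leibniz-remainder type with loss of order $m-1$), then recursing the term $f_1\langle D\rangle^mg$ via the inductive hypothesis for \eqref{equation:leibniz} applied to $g$, and using \eqref{equation:chain} (already proved) to absorb $g\langle D\rangle^m f_1$ into the $\nu=1$ term up to an error of order $m-1$. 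Collecting all remainders and using the algebra estimates in $H^{m-1}$ and $H^\theta$ to bound the products of passive factors yields the right-hand side of \eqref{equation:leibniz}.

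The main obstacle is bookkeeping: making sure the inductive constant does not blow up faster than geometrically in $k$, and that every product of ``passive'' factors is estimated so that exactly one factor carries the higher-order norm ($\lVert\cdot\rVert_m$, $\lVert\cdot\rVert_{m-1}$ or $\lVert\cdot\rVert_\theta$) while all others carry only $\lVert\cdot\rVert_{\theta-1}$ — this forces careful use of the bilinear tame estimates $\lVert uv\rVert_\sigma\leqslant C(\lVert u\rVert_\sigma\lVert v\rVert_{\theta-1}+\lVert u\rVert_{\theta-1}\lVert v\rVert_\sigma)$ for $\sigma\in\{m-1,\theta\}$, all valid because $\theta-1>n/2$. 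A secondary point is the reduction from general $p\in S^m$ to $\langle\xi\rangle^m$: one writes $p(D)=p(D)\langle D\rangle^{-m}\cdot\langle D\rangle^m$ and notes $p(D)\langle D\rangle^{-m}$ is bounded on every $H^s$, so it can be commuted past the whole argument at the cost of one constant, but one must check this does not interfere with the Leibniz structure in \eqref{equation:leibniz}; in fact it only changes the passive-factor product by an $H^s$-bounded operator, which is harmless.
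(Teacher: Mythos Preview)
Your approach to \eqref{equation:chain} can probably be made to work, but note that under the hypothesis $\theta>n/2+1$ you only get $\lVert f\rVert_{L^\infty}\leqslant C\lVert f\rVert_{\theta-1}$, not $\lVert\partial f\rVert_{L^\infty}\leqslant C\lVert f\rVert_{\theta-1}$; the Kato--Ponce error terms therefore do not quite fit the right-hand side of \eqref{equation:chain} without extra care. More seriously, your treatment of \eqref{equation:leibniz} has a genuine gap at the $k=2$ base case. The estimate you need there,
\[
\lVert p(D)(fg)-g\,p(D)f-f\,p(D)g\rVert\leqslant C\bigl(\lVert f\rVert_{m-1}\lVert g\rVert_\theta+\lVert f\rVert_\theta\lVert g\rVert_{m-1}\bigr),
\]
does \emph{not} follow from Kato--Ponce \eqref{equation:kato-ponce}. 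Kato--Ponce bounds $\langle D\rangle^m(fg)-f\langle D\rangle^mg$ with a term $\lVert f\rVert_m\lVert g\rVert_{L^\infty}$ on the right, and subtracting $g\langle D\rangle^mf$ does nothing to lower that $\lVert f\rVert_m$ to $\lVert f\rVert_{m-1}$. The symmetric remainder genuinely gains one derivative over the one-sided commutator, and your ``symmetrizing'' assumes this gain rather than proving it. Your reduction $p(D)=Q\langle D\rangle^m$ with $Q$ of order $0$ does not help here either: the expression $p(D)(\prod f_j)-\sum_\nu(\prod_{j\ne\nu}f_j)p(D)f_\nu$ differs from $Q$ applied to the $\langle D\rangle^m$-Leibniz remainder by the commutators $[\,Q,\prod_{j\ne\nu}f_j\,]\langle D\rangle^mf_\nu$, which are not ``harmless'' without a separate argument tracking the $k$-dependence.

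The paper handles both estimates directly on the Fourier side and never invokes Kato--Ponce. For \eqref{equation:chain} it uses $\lvert p(\xi)\rvert\leqslant C\langle\xi\rangle^m$ together with $\langle\xi\rangle^m\leqslant C\sum_\nu\langle\eta_\nu\rangle^m$ inside the $k$-fold convolution, then Sobolev embedding. For the $k=2$ case of \eqref{equation:leibniz} it sets $\sigma(\xi,\eta)=p(\xi+\eta)-p(\xi)-p(\eta)$ and proves the pointwise symbol bound $\lvert\sigma(\xi,\eta)\rvert\leqslant C\langle\xi\rangle^{m-1}\langle\eta\rangle$ for $\lvert\xi\rvert\geqslant\lvert\eta\rvert$ via the mean value theorem applied to $p\in S^m$; the symmetric remainder is then written via Plancherel as an integral in $\sigma(\xi-\eta,\eta)\hat f(\xi-\eta)\hat g(\eta)$, split into the regions $\lvert\xi-\eta\rvert\gtrless\lvert\eta\rvert$, and estimated by Young's inequality using $\theta-1>n/2$. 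This works for any $p\in S^m$ with no reduction to $\langle\xi\rangle^m$. The case $k\geqslant3$ is then obtained, as you propose, by a telescoping identity reducing to the $k=2$ bound combined with \eqref{equation:chain}.
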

\begin{proof}
First we show \eqref{equation:chain}. 
We denote the Fourier transform of $f$ by $\hat{f}$ or $\mathscr{F}[f]$, 
and the convolution of functions on $\mathbb{R}^n$ by $*$ respectively. 
Using the Plancherel-Parseval formula and the Sobolev embedding, 
we deduce 
\begin{align*}
& \lVert{p(D)}(\prod_{j=1}^kf_j)\rVert
\\
& \quad
  =
  \left(
  \int_{\mathbb{R}^n}
  \lvert
  p(\xi)
  \hat{f}_1*\dotsb*\hat{f}_k(\xi)
  \rvert^2
  d\xi
  \right)^{1/2}
\\
& \quad
  \leqslant
  C
  \sum_{\nu=1}^k
  \left(
  \int_{\mathbb{R}^n}
  \lvert
  \hat{f}_1*\dotsb*\hat{f}_{\nu-1}*
  \mathscr{F}[\langle{D}\rangle^mf_\nu]
  *\hat{f}_{\nu+1}*\dotsb*\hat{f}_k(\xi)
  \rvert^2
  d\xi
  \right)^{1/2}
\\
& \quad
  =
  C
  \sum_{\nu=1}^k
  \lVert
  \prod_{\substack{j=1\\ j\ne\nu}}^kf_j
  \langle{D}\rangle^mf_\nu
  \rVert 
\\
& \quad
  \leqslant
  C
  \sum_{\nu=1}^k
  \prod_{\substack{j=1\\ j\ne\nu}}^k
  \lVert{f_j}\rVert_{L^\infty}
  \lVert{f_\nu}\rVert_{m}
\\
& \quad
  \leqslant
  CC_0^{k-1}
  \sum_{\nu=1}^k
  \prod_{\substack{j=1\\ j\ne\nu}}^k
  \lVert{f_j}\rVert_{\theta-1}
  \lVert{f_\nu}\rVert_{m}
\\
& \quad
  \leqslant
  C_1^k
  \sum_{\nu=1}^k
  \prod_{\substack{j=1\\ j\ne\nu}}^k
  \lVert{f_j}\rVert_{\theta-1}
  \lVert{f_\nu}\rVert_{m},
\end{align*}
where $C_1=\max\{C,C_0\}$. 
\par
Next we show \eqref{equation:leibniz}. 
Set $\sigma(\xi,\eta)=p(\xi+\eta)-p(\xi)-p(\eta)$ for short. 
Here we claim 
\begin{equation}
\lvert\sigma(\xi,\eta)\rvert
\leqslant
C\langle\xi\rangle^{m-1}\langle\eta\rangle
\quad\text{for}\quad
\lvert\xi\rvert\geqslant\lvert\eta\rvert.
\label{equation:sigma} 
\end{equation}
Indeed, the mean value theorem implies 
$$
\sigma(\xi,\eta)
=
\sum_{j=1}^n
\eta_j
\int_0^1
\frac{\p{p}}{\p\xi_j}(\xi+\rho\eta)
d\rho
-
p(\eta). 
$$
Then we have 
$$
\lvert\sigma(\xi,\eta)\rvert
\leqslant
C
\lvert\eta\rvert
\int_0^1
\langle\xi+\rho\eta\rangle^{m-1}
d\rho
+
C
\langle\eta\rangle^m.
$$
Since $\lvert\xi\rvert\geqslant\lvert\eta\rvert$ and $m-1\geqslant0$, 
we get 
$$
\lvert\sigma(\xi,\eta)\rvert
\leqslant
C\langle\xi\rangle^{m-1}\langle\eta\rangle
+
C\langle\eta\rangle^m,
$$
which is \eqref{equation:sigma}. 
\par
Now we show \eqref{equation:leibniz} for $k=2$. 
The Plancherel-Parseval formula gives 
\begin{align*}
& \lVert{p(D)(fg)-gp(D)f-fp(D)g}\rVert
\\
& \quad
  =
  \left(
  \int_{\mathbb{R}^n}
  \left\lvert
  \int_{\mathbb{R}^n}
  \sigma(\xi-\eta,\eta)
  \hat{f}(\xi-\eta)\hat{g}(\eta)
  d\eta
  \right\rvert^2
  d\xi
  \right)^{1/2}. 
\end{align*}
We split the above integration in $\eta$ into two pieces:
$$
\int_{\mathbb{R}^n}\dotsb d\eta
=
\int_{\lvert\xi-\eta\rvert\geqslant\lvert\eta\rvert}\dotsb d\eta
+
\int_{\lvert\xi-\eta\rvert<\lvert\eta\rvert}\dotsb d\eta.
$$
Then we have 
$$
\lVert{p(D)(fg)-gp(D)f-fp(D)g}\rVert
\leqslant
\text{I}+\text{II}, 
$$
\begin{align*}
  \text{I}
& =
  \left(
  \int_{\mathbb{R}^n}
  \left\lvert
  \int_{\lvert\xi-\eta\rvert\geqslant\lvert\eta\rvert}
  \sigma(\xi-\eta,\eta)
  \hat{f}(\xi-\eta)\hat{g}(\eta)
  d\eta
  \right\rvert^2
  d\xi
  \right)^{1/2},
\\
  \text{II}
& =
  \left(
  \int_{\mathbb{R}^n}
  \left\lvert
  \int_{\lvert\xi-\eta\rvert<\lvert\eta\rvert}
  \sigma(\xi-\eta,\eta)
  \hat{f}(\xi-\eta)\hat{g}(\eta)
  d\eta
  \right\rvert^2
  d\xi
  \right)^{1/2}.
\end{align*}
On one hand, 
applying \eqref{equation:sigma}, 
the Young and the Schwarz inequalities 
in order of precedence, 
we deduce 
\begin{align}
  \text{I}
& \leqslant
  C
  \left(
  \int_{\mathbb{R}^n}
  \left\lvert
  \int_{\mathbb{R}^n}
  \langle\xi-\eta\rangle^{m-1}\lvert\hat{f}(\xi-\eta)\rvert
  \langle\eta\rangle\lvert\hat{g}(\eta)\rvert
  d\eta
  \right\rvert^2
  d\xi
  \right)^{1/2}
\nonumber
\\
& \leqslant
  C
  \lVert{f}\rVert_{m-1}
  \int_{\mathbb{R}^n}
  \langle\eta\rangle\lvert\hat{g}(\eta)\rvert
  d\eta
\nonumber
\\ 
& =
  C
  \lVert{f}\rVert_{m-1}
  \int_{\mathbb{R}^n}
  \langle\eta\rangle^{-(\theta-1)}
   \lvert\langle\eta\rangle^\theta\hat{g}(\eta)\rvert
   d\eta
\nonumber
\\ 
& \leqslant
  C
  \left(
  \int_{\mathbb{R}^n}
  \langle\eta\rangle^{-2(\theta-1)}
  d\eta
  \right)^{1/2}
  \lVert{f}\rVert_{m-1}\lVert{g}\rVert_\theta
\nonumber
\\ 
& =
  C^\prime
  \lVert{f}\rVert_{m-1}\lVert{g}\rVert_\theta.
\label{equation:half1}  
\end{align}
Here we used $\theta-1>n/2$.  
On the other hand, 
using \eqref{equation:sigma} again, and 
changing variable by $\eta\mapsto\zeta=\xi-\eta$, 
we have 
$$
\text{II}
\leqslant
C
\left(
\int_{\mathbb{R}^n}
\left\lvert
\int_{\mathbb{R}^n}
\langle\zeta\rangle\lvert\hat{f}(\zeta)\rvert
\langle\xi-\zeta\rangle\lvert\hat{g}(\xi-\zeta)\rvert
d\zeta
\right\rvert^2
d\xi
\right)^{1/2}, 
$$
which is reduced to $\text{I}$. 
Then we get \eqref{equation:leibniz} for $k=2$:
\begin{equation}
\lVert{p(D)(fg)-gp(D)f-fp(D)g}\rVert
\leqslant
C_2
(
\lVert{f}\rVert_{m-1}\lVert{g}\rVert_\theta
+
\lVert{f}\rVert_\theta\lVert{g}\rVert_{m-1}
).
\label{equation:leibniz2}
\end{equation}
\par
Lastly, we prove \eqref{equation:leibniz} for $k\geqslant3$. 
Set $\prod_{j=1}^0=1$. 
Applying \eqref{equation:chain} and \eqref{equation:leibniz2} 
to the identity 
\begin{align*}
& p(D)(\prod_{j=1}^kf_j)
  -
  \sum_{\nu=1}^k
  \prod_{\substack{j=1\\ j\ne\nu}}^kf_j
  p(D)f_\nu
\\
& \quad
  =
  \sum_{\nu=1}^{k-1}
  \prod_{l=1}^{\nu-1}f_l
  \{
  p(D)(\prod_{j=\nu}^kf_j)
  -
  f_{\nu}p(D)(\prod_{j=\nu+1}^kf_j)
  -
  \prod_{j=\nu+1}^kf_jp(D)f_\nu
  \}, 
\end{align*}
we deduce 
\begin{align*}
& \lVert
  p(D)(\prod_{j=1}^kf_j)
  -
  \sum_{\nu=1}^k
  \prod_{\substack{j=1\\ j\ne\nu}}f_j
  p(D)f_\nu
  \rVert
\\
& \quad
  \leqslant
  \sum_{\nu=1}^{k-1}
  \prod_{l=1}^{\nu-1}\lVert{f_l}\rVert_{L^\infty}
  \lVert
  p(D)(\prod_{j=\nu}^kf_j)
  -
  f_{\nu}p(D)(\prod_{j=\nu+1}^kf_j)
  -
  \prod_{j=\nu+1}^kf_jp(D)f_\nu
  \rVert
\\
& \quad
  \leqslant
  C_2
  \sum_{\nu=1}^{k-1}
  \prod_{l=1}^{\nu-1}\lVert{f_l}\rVert_{L^\infty}
  (
  \lVert{f_\nu}\rVert_{m-1}
  \lVert\prod_{j=\nu+1}^kf_j\rVert_\theta
  +
  \lVert{f_\nu}\rVert_\theta
  \lVert\prod_{j=\nu+1}^kf_j\rVert_{m-1}
  )
\\
& \quad
  \leqslant
  C_2
  \sum_{\nu=1}^{k-1}C_3^{\nu-1}
  \prod_{l=1}^{\nu-1}\lVert{f_l}\rVert_\theta
  (
  \lVert{f_\nu}\rVert_{m-1}
  \lVert\prod_{j=\nu+1}^kf_j\rVert_\theta
  +
  \lVert{f_\nu}\rVert_\theta
  \lVert\prod_{j=\nu+1}^kf_j\rVert_{m-1}
  )
\\
& \quad
  \leqslant
  C_2
  \sum_{\nu=1}^{k-1}C_3^{\nu-1}
  \prod_{l=1}^{\nu-1}\lVert{f_l}\rVert_\theta
\\
& \qquad
  \times
  \{
  (k-\nu)C_3^{k-\nu}
  \lVert{f_\nu}\rVert_{m-1}
  \prod_{j=\nu+1}^k\lVert{f_j}\rVert_\theta
  +
  C_4^{k-\nu-1}
  \sum_{p=\nu+1}^k
  \prod_{\substack{j=\nu\\ j\ne{p}}}^k
  \lVert{f_j}\rVert_\theta
  \lVert{f_p}\rVert_{m-1}
  \}
\\
& \quad
  \leqslant
  C_5^kk
  \sum_{\nu=1}^k
  \lVert{f_\nu}\rVert_{m-1}
  \prod_{\substack{j=1\\ j\ne\nu}}^k
  \lVert{f_j}\rVert_\theta 
\\
& \quad
  \leqslant
  2^kC_5^k
  \sum_{\nu=1}^k
  \lVert{f_\nu}\rVert_{m-1}
  \prod_{\substack{j=1\\ j\ne\nu}}^k
  \lVert{f_j}\rVert_\theta, 
\end{align*}
where $C_5=\max\{C_2,C_3,C_4\}$. 
This completes the proof.
\end{proof}

\section{Summation properties}
\label{section:preliminaries}
This section consists of miscellaneous lemmas needed later. 
In particular, we obtain some fine summmation properties 
related with Gevrey estimates.   
We start by giving the properties of exponentially decaying functions. 
\begin{lemma}
\label{theorem:decay}
Let $s>0$, $\ep>0$ and $\theta\in\mathbb{R}$. 
If $\exp(\ep\langle{x}\rangle^{1/s})u_0{\in}H^\theta$, 
then there exists $q=q(n,\theta,\ep,s)>0$ such that 
for any multi-index $\alpha$ 
\begin{equation}
\lVert{x^\alpha}u_0\rVert_\theta
\leqslant
\lVert\exp(\ep\langle{x}\rangle^{1/s})u_0\rVert_\theta
q^{\lvert\alpha\rvert+1}\alpha!^s. 
\label{equation:decay}
\end{equation}
\end{lemma}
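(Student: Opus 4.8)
The plan is to bound $\lvert x^\alpha\rvert$ pointwise by a constant times $\exp(\ep\langle x\rangle^{1/s})$ with a prefactor that reproduces the desired $q^{\lvert\alpha\rvert+1}\alpha!^s$ growth, and then to absorb the weight $\langle x\rangle^l$ hidden in $\lVert\cdot\rVert_\theta$ by passing to a slightly smaller exponential rate. First I would note that $\lvert x^\alpha\rvert\leqslant\lvert x\rvert^{\lvert\alpha\rvert}\leqslant\langle x\rangle^{\lvert\alpha\rvert}$, so it suffices to control $\langle x\rangle^{\lvert\alpha\rvert}$. Writing $t=\langle x\rangle^{1/s}\geqslant1$ and $p=\lvert\alpha\rvert$, I want an inequality of the form $t^{sp}\leqslant C_1 C_2^{p}\,p!^{s}\exp\!\big(\tfrac{\ep}{2}t\big)$ uniformly in $t\geqslant1$. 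This follows from the elementary fact that for each $N$, $\tau^{N}\leqslant N!\,e^{\tau}$ (compare with the Taylor series of $e^\tau$); applying it with $\tau=\tfrac{\ep}{2s}t$ and $N=p$ gives $t^{p}\leqslant p!\,(2s/\ep)^{p}\exp(\tfrac{\ep}{2s}t)$, hence raising to the $s$-th power, $t^{sp}=\langle x\rangle^{p}\leqslant p!^{s}(2s/\ep)^{sp}\exp(\tfrac{\ep}{2}t)$. This is the heart of the argument and, I expect, the only step requiring any care — everything else is bookkeeping about which constants depend on which parameters.

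Next I would handle the weight $\langle x\rangle^{l}$ appearing in $\lVert\cdot\rVert_{\theta}=\lVert\cdot\rVert_{\theta,0}$ — actually $\lVert\cdot\rVert_\theta$ carries no $x$-weight, so this point is moot and one only needs $\langle D\rangle^\theta$; but $\langle D\rangle^\theta$ does not commute with multiplication by $x^\alpha$, so instead of estimating $\langle D\rangle^\theta(x^\alpha u_0)$ directly I would argue on the Fourier side. The cleaner route: observe $\lVert x^\alpha u_0\rVert_\theta=\lVert\langle D\rangle^\theta x^\alpha u_0\rVert=\lVert\p_\xi^\alpha(\langle\xi\rangle^\theta\widehat{u_0})\cdot(\text{factors of }i)\rVert$ after integration by parts — no, this reintroduces derivatives of $\langle\xi\rangle^\theta$ and is messy. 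The genuinely painless approach is to avoid the Fourier picture entirely: pick $\ep'\in(0,\ep)$ and note $\exp(\ep'\langle x\rangle^{1/s})\langle x\rangle^{\lvert\alpha\rvert}\leqslant q_1^{\lvert\alpha\rvert+1}\alpha!^s\exp(\ep\langle x\rangle^{1/s})$ pointwise (by the displayed elementary inequality with $\ep$ replaced by $\ep-\ep'$), so that multiplication by $x^\alpha$ maps $\exp(\ep\langle x\rangle^{1/s})H^\theta$ into $\exp(\ep'\langle x\rangle^{1/s})H^\theta$; but I still need $\langle D\rangle^\theta$ to commute with the exponential weight, which it does not. The standard fix is a commutator/conjugation estimate: for a weight $w(x)=\exp(\ep'\langle x\rangle^{1/s})$ with $s\geqslant1$ one has $\langle D\rangle^\theta w^{-1}\langle D\rangle^{-\theta}$ bounded on $L^2$ (since $\nabla w/w=O(\langle x\rangle^{1/s-1})$ is bounded when $s\geqslant1$, the conjugated operator lies in a nice symbol class), whence $\lVert w^{-1}v\rVert_\theta\leqslant C\lVert w^{-1}\langle D\rangle^\theta v\rVert$. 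Hence
\[
\lVert x^\alpha u_0\rVert_\theta
\leqslant C\,\lVert w^{-1}\langle D\rangle^\theta(w\,w^{-1}x^\alpha u_0)\rVert
\leqslant C'\,\lVert \langle x\rangle^{\lvert\alpha\rvert}\exp(-\ep'\langle x\rangle^{1/s})\,\exp(\ep\langle x\rangle^{1/s})u_0\rVert_\theta
\leqslant C''q_1^{\lvert\alpha\rvert+1}\alpha!^s\lVert\exp(\ep\langle x\rangle^{1/s})u_0\rVert_\theta,
\]
absorbing the constant $C''$ into $q$.

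Putting the pieces together: set $q=q(n,\theta,\ep,s)$ to be the maximum of the constant from the elementary inequality $\langle x\rangle^{p}\exp(-\tfrac{\ep-\ep'}{?}\langle x\rangle^{1/s})\leqslant q^{p}p!^{s}$ and the operator-norm constant from the weighted Sobolev comparison, and $\lvert\alpha\rvert+1$ rather than $\lvert\alpha\rvert$ in the exponent just to have a uniform lower bound $q\geqslant1$ for the $\alpha=0$ case. I expect the main obstacle to be precisely the fact that $\langle D\rangle^\theta$ and the exponential weight do not commute; the condition $s\geqslant1$ (equivalently, $\langle x\rangle^{1/s-1}$ bounded) is exactly what makes the conjugation $\langle D\rangle^\theta\exp(\pm\ep'\langle x\rangle^{1/s})\langle D\rangle^{-\theta}$ an $L^2$-bounded pseudodifferential operator, so this is where that hypothesis gets used. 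Everything else — the factorial gymnastics, the choice of $\ep'$, the reduction $\lvert x^\alpha\rvert\leqslant\langle x\rangle^{\lvert\alpha\rvert}$ — is routine.
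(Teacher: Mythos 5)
Your core idea --- factor $x^\alpha u_0=(x^\alpha e^{-\ep\langle x\rangle^{1/s}})\cdot(e^{\ep\langle x\rangle^{1/s}}u_0)$ and control the first factor by the elementary bound $\sup_{t>0}t^{p}e^{-\ep t^{1/s}}\leqslant q^{p}p!^{s}$ --- is exactly the paper's, and that elementary estimate is carried out correctly. But the step you dismiss as ``routine bookkeeping'' is where the proof actually lives, and as written it has a gap: to pass from $\lVert m_\alpha v\rVert_\theta$ to $Cq^{\lvert\alpha\rvert+1}\alpha!^{s}\lVert v\rVert_\theta$ with $m_\alpha=x^\alpha e^{-\ep\langle x\rangle^{1/s}}$, you need multiplication by $m_\alpha$ to be bounded as an operator on $H^\theta$ with norm $\leqslant q^{\lvert\alpha\rvert+1}\alpha!^{s}$, and for $\theta\ne0$ this requires bounding roughly $\lvert\theta\rvert$ \emph{derivatives} of $m_\alpha$ with the same growth in $\alpha$, not merely its sup norm. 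The paper does precisely this: it writes $\langle D\rangle^\theta m_\alpha\langle D\rangle^{-\theta}$, invokes $L^2$-boundedness of order-zero operators with norm controlled by $\lVert m_\alpha\rVert_{\mathscr{B}^\nu}$ for $\nu>\lvert\theta\rvert$, and then runs the Leibniz formula on $\p^\beta(x^\alpha e^{-\ep\langle x\rangle^{1/s}})$, absorbing the factors $\langle x\rangle^{(1/s-1)\lvert\beta-\gamma\rvert}$ produced by differentiating the exponential. Your pointwise inequality only supplies the $\beta=0$ case of this computation.

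The second problem is your conjugation of the \emph{growing} weight $w=\exp(\ep'\langle x\rangle^{1/s})$ and the claim that $s\geqslant1$ is ``exactly where that hypothesis gets used.'' The lemma is stated for all $s>0$ and is applied in Section~\ref{section:remark} with $s=1/2$, so an argument that needs $s\geqslant1$ proves strictly less than is asserted. The restriction is an artifact of your route: the paper never moves $\langle D\rangle^\theta$ past the growing exponential; it only ever multiplies by the decaying function $x^\alpha e^{-\ep\langle x\rangle^{1/s}}$, all of whose derivatives decay rapidly for every $s>0$, since the positive powers of $\langle x\rangle$ arising when $s<1$ are swallowed by the exponential. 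Incidentally, your displayed chain also has the exponents garbled: $\exp(-\ep'\langle x\rangle^{1/s})\exp(\ep\langle x\rangle^{1/s})=\exp((\ep-\ep')\langle x\rangle^{1/s})$ grows, so the middle term is not a bounded multiplier applied to $e^{\ep\langle x\rangle^{1/s}}u_0$; the clean factorization keeps the full rate $\ep$ in the decaying factor and needs no auxiliary $\ep'$ at all.
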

\begin{proof}
By the $L^2$-boundedness theorem 
for pseudodifferential operators of order zero, 
we deduce 
\begin{align}
  \lVert{x^\alpha}u_0\rVert_\theta
& =
  \lVert\langle{D}\rangle^\theta{x^\alpha}u_0\rVert
\nonumber
\\
& =
  \lVert
  \langle{D}\rangle^\theta
  (x^\alpha{e}^{-\ep\langle{x}\rangle^{1/s}})
  \langle{D}\rangle^{-\theta}
  \langle{D}\rangle^\theta
  e^{\ep\langle{x}\rangle^{1/s}}
  u_0
  \rVert
\nonumber
\\
& \leqslant
  C
  \lVert{x^\alpha{e}^{-\ep\langle{x}\rangle^{1/s}}}\rVert_{\mathscr{B}^\nu}
  \lVert{e^{\ep\langle{x}\rangle^{1/s}}u_0}\rVert_\theta, 
\label{equation:301}
\end{align}
where $\nu$ is a positive integer satisfying $\nu>\lvert\theta\rvert$. 
Set $\rho=\max\{0,1/s-1\}$.  
Using the Leibniz formula for $\lvert\beta\rvert\leqslant\nu$, we have 
\begin{align*}
& \lvert\p^\beta(x^\alpha{e}^{-\ep\langle{x}\rangle^{1/s}})\rvert
\\
& \quad
  =
  \left\lvert
  \sum_{\gamma\leqslant\beta}
  \frac{\beta!}{\gamma!(\beta-\gamma)!}
  \p^\gamma{x^\alpha}
  \p^{\beta-\gamma}
  e^{-\ep\langle{x}\rangle^{1/s}}
  \right\rvert
\\
& \quad
  \leqslant
  \sum_{\gamma\leqslant\beta,\alpha}
  \frac{\beta!}{\gamma!(\beta-\gamma)!}
  \frac{\alpha!}{\gamma!(\alpha-\gamma)!}
  \gamma!\lvert{x}^{\alpha-\gamma}\rvert
  \lvert{\p^{\beta-\gamma}e^{-\ep\langle{x}\rangle^{1/s}}}\rvert
\\
& \quad
  \leqslant
  C_\nu
  \sum_{\gamma\leqslant\beta,\alpha}
  \frac{\beta!}{\gamma!(\beta-\gamma)!}
  \frac{\alpha!}{\gamma!(\alpha-\gamma)!}
  \gamma!
  \langle{x}\rangle^{\lvert\alpha-\gamma\rvert
                     +(1/s-1)\lvert\beta-\gamma\rvert}
  e^{-\ep\langle{x}\rangle^{1/s}}
\\
& \quad
  \leqslant
  C_\nu2^{\lvert\alpha\rvert+\nu}\nu!
  \langle{x}\rangle^{\lvert\alpha\rvert+\rho\nu}
  e^{-\ep\langle{x}\rangle^{1/s}}
\\
& \quad
  \leqslant
  C_\nu2^{(1+\rho)\nu}\nu!4^{\lvert\alpha\rvert}
  \sup_{\tau>0}\tau^{\rho\nu}
  e^{-\ep\tau^{1/s}}
  \sup_{t>0}
  t^{\lvert\alpha\rvert}
  e^{-\ep{t}^{1/s}}
\\
& \quad
  =
  C_{\nu,s}^\prime
  4^{\lvert\alpha\rvert}
  \sup_{t>0}
  t^{\lvert\alpha\rvert}
  e^{-\ep{t}^{1/s}}
\\
& \quad
  =
  C_{\nu,s}^\prime
  4^{\lvert\alpha\rvert}
  t^{\lvert\alpha\rvert}
  e^{-\ep{t}^{1/s}}
  \bigg\vert_{t=(s\lvert\alpha\rvert/\ep)^s}
\\
& \quad
  =
  C_{\nu,s}^\prime
  (4s^se^{-s})^{\lvert\alpha\rvert}
  ({\lvert\alpha\rvert}^{\lvert\alpha\rvert}e^{-\lvert\alpha\rvert})^s
\\
& \quad
  \leqslant
  C_{\nu,s}^\prime
  (4s^se^{-s})^{\lvert\alpha\rvert}
  \lvert\alpha\rvert!^s.
\end{align*}  
Then there exists $q>0$ which is independent $\alpha$, such that 
\begin{equation}
\lVert{x^\alpha{e}^{-\ep\langle{x}\rangle^{1/s}}}\rVert_{\mathscr{B}^\nu}
\leqslant
q^{\lvert\alpha\rvert+1}\alpha!^s.
\label{equation:302}
\end{equation} 
The substitution of \eqref{equation:302} into \eqref{equation:301} 
gives \eqref{equation:decay}. 
\end{proof}
\par
Next we present a lemma concerned with factorials. 
\begin{lemma}
\label{theorem:factorial}
For any multi-indices $\alpha$, $\alpha^1,\dotsc,\alpha^p$ 
satisfying $\alpha=\alpha^1+\dotsc+\alpha^p$, 
\begin{equation}
\frac{\lvert\alpha^1\rvert!\dotsb\lvert\alpha^p\rvert!}
     {\alpha^1!\dotsb\alpha^p!}
\leqslant
\frac{\lvert\alpha\rvert!}{\alpha!}. 
\label{equation:factorial}
\end{equation}
\end{lemma}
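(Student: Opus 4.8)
The plan is to prove the multinomial-type inequality \eqref{equation:factorial} by induction on $p$, reducing everything to the case $p=2$. The case $p=1$ is trivial since then $\alpha^1=\alpha$. For $p=2$, writing $\alpha=\alpha^1+\alpha^2$, the claim reads
$$
\frac{\lvert\alpha^1\rvert!\,\lvert\alpha^2\rvert!}{\alpha^1!\,\alpha^2!}
\leqslant
\frac{\lvert\alpha\rvert!}{\alpha!},
$$
which after rearranging is equivalent to
$$
\frac{\alpha!}{\alpha^1!\,\alpha^2!}
\leqslant
\frac{\lvert\alpha\rvert!}{\lvert\alpha^1\rvert!\,\lvert\alpha^2\rvert!},
$$
i.e. the product of binomial coefficients $\prod_{k=1}^n\binom{\alpha_k}{\alpha^1_k}$ is bounded by the single binomial coefficient $\binom{\lvert\alpha\rvert}{\lvert\alpha^1\rvert}$. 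This is the standard fact that the number of ways to choose, coordinate by coordinate, a sub-multi-index of total size $\lvert\alpha^1\rvert$ is at most the number of ways to choose $\lvert\alpha^1\rvert$ objects out of $\lvert\alpha\rvert$ without regard to coordinates; combinatorially, the map sending a choice of $\alpha^1_k$ elements from the $k$-th block (for each $k$) to the union of those chosen elements, viewed as a subset of the disjoint union of blocks, is injective, and its image has size $\binom{\lvert\alpha\rvert}{\lvert\alpha^1\rvert}$ while its domain has size $\prod_k\binom{\alpha_k}{\alpha^1_k}$. Alternatively one can argue purely by generating functions: comparing coefficients in $(1+x)^{\alpha_1}\cdots(1+x)^{\alpha_n}=(1+x)^{\lvert\alpha\rvert}$ shows $\sum\prod_k\binom{\alpha_k}{j_k}=\binom{\lvert\alpha\rvert}{j}$ where the inner sum ranges over $j_1+\dots+j_n=j$, all terms nonnegative, whence each individual term is $\leqslant\binom{\lvert\alpha\rvert}{j}$.

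For the inductive step, suppose \eqref{equation:factorial} holds for $p-1$ factors. Given $\alpha=\alpha^1+\dots+\alpha^p$, set $\gamma=\alpha^1+\dots+\alpha^{p-1}$, so that $\alpha=\gamma+\alpha^p$. Applying the $p=2$ case to this splitting gives
$$
\frac{\lvert\gamma\rvert!\,\lvert\alpha^p\rvert!}{\gamma!\,\alpha^p!}
\leqslant
\frac{\lvert\alpha\rvert!}{\alpha!},
$$
and applying the induction hypothesis to $\gamma=\alpha^1+\dots+\alpha^{p-1}$ gives
$$
\frac{\lvert\alpha^1\rvert!\dotsb\lvert\alpha^{p-1}\rvert!}{\alpha^1!\dotsb\alpha^{p-1}!}
\leqslant
\frac{\lvert\gamma\rvert!}{\gamma!}.
$$
Multiplying these two inequalities (all quantities are positive) and cancelling $\lvert\gamma\rvert!/\gamma!$ yields
$$
\frac{\lvert\alpha^1\rvert!\dotsb\lvert\alpha^p\rvert!}{\alpha^1!\dotsb\alpha^p!}
\leqslant
\frac{\lvert\alpha\rvert!}{\alpha!},
$$
which is \eqref{equation:factorial}, completing the induction.

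The only genuine content is the base case $p=2$, so that is where I would spend care; the rest is a bookkeeping induction. I expect the cleanest writeup to use the generating-function comparison for the base case, since it avoids setting up an explicit combinatorial bijection and makes the nonnegativity of every term manifest. No step presents a real obstacle — the inequality is elementary — but one should be slightly careful that the reduction from the ratio form to the binomial-coefficient form is done correctly coordinatewise, i.e. that $\alpha!=\prod_k\alpha_k!$ and likewise for the $\alpha^j$, and that $\alpha^j\leqslant\alpha$ coordinatewise so all binomial coefficients that appear are the usual ones.
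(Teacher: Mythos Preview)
Your proof is correct. The approach differs from the paper's, though both rest on the same underlying generating-function idea. The paper works directly with the multinomial expansion of $(x_1+\dotsb+x_n)^{\lvert\alpha\rvert}=\prod_{j=1}^p(x_1+\dotsb+x_n)^{\lvert\alpha^j\rvert}$ in $n$ variables: comparing the coefficient of $x^\alpha$ on both sides yields the identity
\[
\frac{\lvert\alpha\rvert!}{\alpha!}
=
\sum_{\substack{\gamma^1+\dotsb+\gamma^p=\alpha\\ \lvert\gamma^j\rvert=\lvert\alpha^j\rvert}}
\frac{\lvert\alpha^1\rvert!\dotsb\lvert\alpha^p\rvert!}{\gamma^1!\dotsb\gamma^p!},
\]
from which the inequality follows immediately since the term $\gamma^j=\alpha^j$ sits among nonnegative summands. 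This handles all $p$ at once with no induction. Your route instead reduces to $p=2$ (essentially the Vandermonde identity, obtained from the one-variable expansion $(1+x)^{\lvert\alpha\rvert}=\prod_k(1+x)^{\alpha_k}$) and then inducts. The paper's argument is slightly slicker in that it avoids the inductive bookkeeping; yours has the advantage of isolating the only nontrivial step as a familiar binomial inequality. One small remark on your writeup: the phrase ``multiplying these two inequalities and cancelling $\lvert\gamma\rvert!/\gamma!$'' is really a chaining of inequalities (multiply the induction hypothesis by $\lvert\alpha^p\rvert!/\alpha^p!$, then apply the $p=2$ bound), so you may want to phrase it that way for clarity.
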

\begin{proof}
Let $n$ be the dimension of $\alpha$. 
Since 
$$
(x_1+\dotsb+x_n)^{\lvert\alpha\rvert}
=
\prod_{j=1}^p(x_1+\dotsb+x_n)^{\lvert\alpha^j\rvert}, 
\quad
x=(x_1,\dotsc,x_n)\in\mathbb{R}^n,
$$ 
the multinomial theorem gives 
$$
\sum_{\lvert\gamma\rvert=\lvert\alpha\rvert}
\frac{\lvert\alpha\rvert!}{\gamma!}x^\gamma
=
\sum_{\lvert\gamma^1\rvert=\lvert\alpha^1\rvert}
\dotsb
\sum_{\lvert\gamma^p\rvert=\lvert\alpha^p\rvert}
\frac{\lvert\alpha^1\rvert!\dotsb\lvert\alpha^p\rvert!}
     {\gamma^1!\dotsb\gamma^p!}
     x^{\gamma^1+\dotsb+\gamma^p}.
$$
Operating $\p^\alpha/\alpha!$ on the both sides of the above identity, 
we have 
$$
\frac{\lvert\alpha\rvert!}{\alpha!}
=
\sum_{\substack{\gamma^1+\dotsb+\gamma^p=\alpha
                \\
                \lvert\gamma^1\rvert=\lvert\alpha^1\rvert
                \\
                \dotsb
                \\
                \lvert\gamma^p\rvert=\lvert\alpha^p\rvert}}
\frac{\lvert\alpha^1\rvert!\dotsb\lvert\alpha^p\rvert!}
     {\gamma^1!\dotsb\gamma^p!},
$$
which implies \eqref{equation:factorial}. 
\end{proof}
\par
Now we present a lemma concerned with the nonlinearity and multi-indices. 
This plays a crucial role in the estimate of nonlinearity. 
Let $\alpha=(\alpha_1,\dotsc,\alpha_n)$ be a multi-index. 
Set 
$$
\alpha_\ast
=
(\max\{0,\alpha_1-1\},\dotsb,\max\{0,\alpha_n-1\}),
$$
$$
\alpha^\dagger
=
\prod_{j=1}^n\alpha_j^\dagger, 
\quad
\alpha_j^\dagger
=
\max\{1,\alpha_j\},
\quad
\alpha_\ast^\dagger=(\alpha_\ast)^\dagger.
$$ 
\begin{lemma}
\label{theorem:summation1}
Let $l$, $p$ and $q$ be integers satisfying 
$l\geqslant0$ and $p,q\geqslant2$ respectively. 
Set 
$$
N
=
\sum_{k=0}^l
\frac{(k+n-1)!}{k!(n-1)!}, 
$$
which is the number of $n$-dimensional multi-indices satisfying 
$\lvert\alpha\rvert\leqslant{l}$. 
For any vector 
$(X(\alpha))_{\lvert\alpha\rvert\leqslant{l}}\in[0,\infty)^N$, 
\begin{align}
& \sum_{\lvert\alpha\rvert\leqslant{l}}
  \sum_{\substack{\alpha(1)+\dotsb+\alpha(p)=\alpha
                  \\
                  \beta(1)+\dotsb+\beta(q)=\alpha}}
  \dfrac{(\alpha_\ast^\dagger)^2}
        {\prod_{j=1}^p\alpha(j)_\ast^\dagger
         \prod_{k=1}^q\beta(k)_\ast^\dagger}
  \prod_{j=1}^pX(\alpha(j))
  \prod_{k=1}^qX(\beta(k))
\nonumber
\\
& \quad
  \leqslant
  a^{n(p+q-2)}p^{2n}q^{2n}
  \left(
  \sum_{\lvert\alpha\rvert\leqslant{l}}
  X(\alpha)^2
  \right)^{(p+q)/2}, 
\label{equation:summation1}
\end{align}
where
$$
a
=
\left(
1+\sum_{j=1}^\infty\frac{1}{j^2}
\right)^{1/2}.
$$
\end{lemma}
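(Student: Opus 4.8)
The plan is to reduce the multi-dimensional sum to a product of one-dimensional sums and then estimate each one-dimensional factor. First I would separate the roles of the $\alpha(j)$-sum and the $\beta(k)$-sum: writing $Y(\gamma)=X(\gamma)/\gamma_\ast^\dagger$ when $\gamma\ne 0$ and keeping careful track of the factor $(\alpha_\ast^\dagger)^2$, the left-hand side becomes
\[
\sum_{\lvert\alpha\rvert\leqslant l}(\alpha_\ast^\dagger)^2
\Bigl(\sum_{\alpha(1)+\dotsb+\alpha(p)=\alpha}\prod_{j=1}^pY(\alpha(j))\Bigr)
\Bigl(\sum_{\beta(1)+\dotsb+\beta(q)=\alpha}\prod_{k=1}^qY(\beta(k))\Bigr),
\]
i.e. a diagonal pairing of the $p$-fold and $q$-fold convolution powers of the sequence $Y$. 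Then by Cauchy--Schwarz in the variable $\alpha$ this is bounded by the geometric mean of $\sum_\alpha(\alpha_\ast^\dagger)^2(Y^{*p})(\alpha)^2$ and the corresponding expression with $q$, so it suffices to estimate a single quantity of the form $\sum_{\lvert\alpha\rvert\leqslant l}(\alpha_\ast^\dagger)^2\bigl(\sum_{\alpha(1)+\dotsb+\alpha(p)=\alpha}\prod Y(\alpha(j))\bigr)^2$.

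Next I would absorb the weight $(\alpha_\ast^\dagger)^2$ into the convolution. The elementary inequality $\alpha_\ast^\dagger\leqslant\prod_{j=1}^p\alpha(j)_\ast^\dagger\cdot(\text{combinatorial factor})$ — more precisely, componentwise $\max\{1,\alpha_r-1\}\leqslant p\prod_j\max\{1,\alpha(j)_r-1\}$ when $\alpha_r=\sum_j\alpha(j)_r$ — lets me move the weight inside and cancel it against the denominator, at the cost of a factor $p^n$ (and similarly $q^n$ for the other sum); after Cauchy--Schwarz these contribute the $p^{2n}q^{2n}$ on the right. This converts the problem into estimating $\sum_\alpha\bigl(\sum_{\alpha(1)+\dotsb+\alpha(p)=\alpha}\prod X(\alpha(j))\bigr)^2$, which by Young's inequality for convolutions (or simply by $\lVert X^{*p}\rVert_{\ell^2}\leqslant\lVert X\rVert_{\ell^1}^{p-1}\lVert X\rVert_{\ell^2}$) is bounded by $\lVert X\rVert_{\ell^1}^{2(p-1)}\lVert X\rVert_{\ell^2}^2$.

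The final step is to control $\lVert X\rVert_{\ell^1}=\sum_{\lvert\alpha\rvert\leqslant l}X(\alpha)$ by $\lVert X\rVert_{\ell^2}=(\sum X(\alpha)^2)^{1/2}$ at the cost only of the constant $a^{n}$ per convolution factor, not of $N^{1/2}$. This is where the definition of $a$ enters: one writes $X(\alpha)=\alpha_\ast^\dagger Y(\alpha)/\alpha_\ast^\dagger$ — no, rather one applies Cauchy--Schwarz with the weight built from $\sum_j j^{-2}$. Concretely, for an $n$-dimensional multi-index, $\sum_\alpha X(\alpha)\leqslant\bigl(\sum_\alpha w(\alpha)^{-2}\bigr)^{1/2}\bigl(\sum_\alpha w(\alpha)^2X(\alpha)^2\bigr)^{1/2}$ with $w(\alpha)=\prod_{r=1}^n\alpha_r^\dagger$ chosen so that $\sum_{\alpha\in(\mathbb N\cup\{0\})^n}w(\alpha)^{-2}=\bigl(1+\sum_{j\geqslant1}j^{-2}\bigr)^n=a^{2n}$, and the weight $w(\alpha)^2=(\alpha^\dagger)^2$ is exactly what I have available from the $(\alpha_\ast^\dagger)^2$-type factors left over in the chain of estimates. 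Tracking the bookkeeping so that the leftover weights line up with what the Cauchy--Schwarz step needs is the one place requiring care, and combining the factors $a^n$ from each of the $p+q-2$ convolution steps (one per extra summand beyond the first in each group, shared between the two groups after Cauchy--Schwarz) yields the exponent $n(p+q-2)$. The main obstacle is therefore the combinatorial accounting in the two middle steps — ensuring the weight $(\alpha_\ast^\dagger)^2$ is neither lost nor double-counted as it migrates from the outer sum, through the convolution, into the $\ell^1$-to-$\ell^2$ conversion — rather than any single hard inequality.
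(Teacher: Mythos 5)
Your overall architecture (view the left-hand side as a diagonal pairing of the $p$-fold and $q$-fold convolution powers, Cauchy--Schwarz in $\alpha$, Young's inequality, and a weighted Cauchy--Schwarz to convert an $\ell^1$ norm into an $\ell^2$ norm at cost $a^n$ per factor) is the same mechanism the paper implements through nested Schwarz inequalities, and the exponent count $p+q-2$ is right. But there is a genuine gap in the weight bookkeeping, and it is exactly at the place you flag as ``requiring care.'' In your second step you cancel the full numerator $(\alpha_\ast^\dagger)^2$ against the \emph{entire} product $\prod_{j}\alpha(j)_\ast^\dagger$ in each group, reducing the problem to $\sum_\alpha\bigl(\sum_{\alpha(1)+\dotsb+\alpha(p)=\alpha}\prod_jX(\alpha(j))\bigr)^2$ with plain, unweighted $X$. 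After Young's inequality you then need $\lVert X\rVert_{\ell^1}\leqslant a^n\lVert X\rVert_{\ell^2}$, and this is false: taking $X(\alpha)\equiv1$ gives $\lVert X\rVert_{\ell^1}=N$ versus $a^nN^{1/2}$. The weight $(\alpha^\dagger)^2$ you invoke in the final Cauchy--Schwarz is \emph{not} ``left over'' at that point --- it was consumed when you cancelled against all $p$ (resp.\ $q$) denominator factors.

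The repair, which is what the paper's proof actually does, is to cancel $(\alpha_\ast^\dagger)^2$ against only \emph{one} denominator factor from each group: for each decomposition there is some $j$ with $\alpha(j)_\ast^\dagger\gtrsim\alpha_\ast^\dagger/p$ (this costs the combinatorial factor $p$ per coordinate, plus another $p$ for summing over which $j$ it is --- hence $p^{2n}$), and the remaining $p-1$ factors keep their weights, i.e.\ stay as $Y(\gamma)=X(\gamma)/\gamma_\ast^\dagger$. Young's inequality then yields $\lVert Y^{\ast(p-1)}\ast X\rVert_{\ell^2}\leqslant\lVert Y\rVert_{\ell^1}^{p-1}\lVert X\rVert_{\ell^2}$, and it is $\lVert Y\rVert_{\ell^1}=\sum_\alpha X(\alpha)/\alpha_\ast^\dagger\leqslant a^n\lVert X\rVert_{\ell^2}$ --- not $\lVert X\rVert_{\ell^1}$ --- that the weighted Cauchy--Schwarz with $\sum_k(k^\dagger)^{-2}=a^2$ controls. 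With that correction (and noting that your componentwise inequality $\max\{1,\alpha_r-1\}\leqslant p\prod_j\max\{1,\alpha(j)_r-1\}$ needs a harmless extra absolute constant: e.g.\ $\alpha(1)_r=\alpha(2)_r=2$ gives $3\not\leqslant2$), the argument closes and is essentially the paper's proof reorganized.
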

\begin{proof}
When $n=1$, there exist $j$ and $k$ such that 
$
\alpha_\ast^\dagger
\leqslant
p\alpha(j)_\ast^\dagger,
q\beta(k)_\ast^\dagger
$. 
Without loss of generality, we can assume $j=p$ and $k=q$. 
Using the Schwarz inequality for the finite sum again and again, we have 
\begin{align}
& \sum_{\lvert\alpha\rvert\leqslant{l}}
  \sum_{\substack{\alpha(1)+\dotsb+\alpha(p)=\alpha
                  \\
                  \beta(1)+\dotsb+\beta(q)=\alpha}}
  \dfrac{(\alpha_\ast^\dagger)^2}
        {\prod_{j=1}^p\alpha(j)_\ast^\dagger
         \prod_{k=1}^q\beta(k)_\ast^\dagger}
  \prod_{j=1}^pX(\alpha(j))
  \prod_{k=1}^qX(\beta(k))
\nonumber
\\
& \quad
  \leqslant
  p^2q^2
  \sum_{\substack{\alpha(1)+\dotsb+\alpha(p)\leqslant{l}
                  \\
                  \beta(1)+\dotsb+\beta(q)
                  =
                  \alpha(1)+\dotsb+\alpha(p)}}
  \prod_{j=1}^{p-1}\frac{X(\alpha(j))}{\alpha(j)_\ast^\dagger}
  \prod_{k=1}^{q-1}\frac{X(\beta(k))}{\beta(k)_\ast^\dagger}
  X(\alpha(p))X(\beta(q))
\nonumber
\\
& \quad
  =
  p^2q^2
  \sum_{\alpha(1)=0}^l
  \frac{X(\alpha(1))}{\alpha(1)_\ast^\dagger}
  \dotsb
  \sum_{\alpha(p-1)=0}^{l-\alpha(1)-\dotsb-\alpha(p-2)}
  \frac{X(\alpha(p-1))}{\alpha(p-1)_\ast^\dagger}
\nonumber
\\
& \quad
  \times
  \sum_{\alpha(p)=0}^{l-\alpha(1)-\dotsb-\alpha(p-1)}
  X(\alpha(p))
\nonumber
\\
& \quad
  \times
  \sum_{\beta(1)=0}^{\alpha(1)+\dotsb+\alpha(p)}
  \frac{X(\beta(1))}{\beta(1)_\ast^\dagger}
  \dotsb
  \sum_{\beta(q-1)=0}^{\alpha(1)+\dotsb+\alpha(p)-\beta(1)-\dotsb-\beta(q-2)}
  \frac{X(\beta(q-1))}{\beta(q-1)_\ast^\dagger}
\nonumber
\\
& \quad
  \times
  X(\alpha(1)+\dotsb+\alpha(p)-\beta(1)-\dotsb-\beta(q-1))
\nonumber
\\
& \quad
  \leqslant
  p^2q^2
  \sum_{\alpha(1)=0}^l
  \frac{X(\alpha(1))}{\alpha(1)_\ast^\dagger}
  \dotsb
  \sum_{\alpha(p-1)=0}^{l-\alpha(1)-\dotsb-\alpha(p-2)}
  \frac{X(\alpha(p-1))}{\alpha(p-1)_\ast^\dagger}
\nonumber
\\
& \quad
  \times
  \sum_{\beta(1)=0}^{l}
  \frac{X(\beta(1))}{\beta(1)_\ast^\dagger}
  \dotsb
  \sum_{\beta(q-1)=0}^{l-\beta(1)-\dotsb-\beta(q-2)}
  \frac{X(\beta(q-1))}{\beta(q-1)_\ast^\dagger}
\nonumber
\\
& \quad
  \times
  \sum_{\alpha(p)=\gamma\geqslant0}^l
  X(\alpha(p))
  X(\alpha(1)+\dotsb+\alpha(p)-\beta(1)-\dotsb-\beta(q-1))
\nonumber
\\
& \quad
  \leqslant
  p^2q^2
  \sum_{\alpha(1)=0}^l
  \frac{X(\alpha(1))}{\alpha(1)_\ast^\dagger}
  \dotsb
  \sum_{\alpha(p-1)=0}^{l-\alpha(1)-\dotsb-\alpha(p-2)}
  \frac{X(\alpha(p-1))}{\alpha(p-1)_\ast^\dagger}
\nonumber
\\
& \quad
  \times
  \sum_{\beta(1)=0}^{l}
  \frac{X(\beta(1))}{\beta(1)_\ast^\dagger}
  \dotsb
  \sum_{\beta(q-1)=0}^{l-\beta(1)-\dotsb-\beta(q-2)}
  \frac{X(\beta(q-1))}{\beta(q-1)_\ast^\dagger}
\nonumber
\\
& \quad
  \times
  \left(
  \sum_{\alpha(p)=\gamma\geqslant0}^l
  X(\alpha(p))^2
  \right)^{1/2}
\nonumber
\\
& \quad
  \times
  \left(
  \sum_{\alpha(p)=\gamma\geqslant0}^l
  X(\alpha(1)+\dotsb+\alpha(p)-\beta(1)-\dotsb-\beta(q-1))^2
  \right)^{1/2}
\nonumber
\\
& \quad
  \leqslant
  p^2q^2
  \left(
  \sum_{\alpha=0}^l
  \frac{X(\alpha)}{\alpha_\ast^\dagger}
  \right)^{p+q-2}
  \sum_{\alpha=0}^l
  X(\alpha)^2
\nonumber
\\
& \quad
  \leqslant
  p^2q^2a^{p+q-2}
  \left(
  \sum_{\alpha=0}^l
  X(\alpha)^2
  \right)^{(p+q)/2},
\label{equation:303}
\end{align}
where we denote  
$\gamma=\beta(1)+\dotsb+\beta(q-1)-\alpha(1)-\dotsb-\alpha(p-1)$. 
\par
When $n=2$, using \eqref{equation:303} twice, we deduce 
\begin{align*}
& \sum_{\lvert\alpha\rvert\leqslant{l}}
  \sum_{\substack{\alpha(1)+\dotsb+\alpha(p)=\alpha
                  \\
                  \beta(1)+\dotsb+\beta(q)=\alpha}}
  \dfrac{(\alpha_\ast^\dagger)^2}
        {\prod_{j=1}^p\alpha(j)_\ast^\dagger
         \prod_{k=1}^q\beta(k)_\ast^\dagger}
  \prod_{j=1}^pX(\alpha(j))
  \prod_{k=1}^qX(\beta(k))
\\
& \quad
  \leqslant
  a^{p+q-2}p^2q^2
  \sum_{\alpha_2=0}^l
  \sum_{\substack{\alpha_2(1)+\dotsb+\alpha_2(p)=\alpha_2
                  \\
                  \beta_2(1)+\dotsb+\beta_2(q)=\alpha_2}}
  \dfrac{\{(\alpha_2)_\ast^\dagger\}^2}
        {\prod_{j=1}^p\alpha_2(j)_\ast^\dagger
         \prod_{k=1}^q\beta_2(k)_\ast^\dagger}
\\
& \qquad
  \times
  \prod_{j=1}^p
  \left(
  \sum_{\alpha_1(j)\leqslant{l}-\alpha_2}
  X(\alpha_1(j),\alpha_2(j))^2
  \right)^{1/2}
\\
& \qquad\quad
  \times
  \prod_{k=1}^q
  \left(
  \sum_{\beta_1(k)\leqslant{l}-\alpha_2}
  X(\beta_1(k),\beta_2(k))^2
  \right)^{1/2}
\\
& \leqslant
  a^{2(p+q-2)}p^4q^4
  \left(
  \sum_{\lvert\alpha\rvert\leqslant{l}}
  X(\alpha)^2
  \right)^{(p+q)/2}.
\end{align*}
In the same way, we can obtain \eqref{equation:summation1} 
for any $n\geqslant3$. We omit the detail. 
\end{proof}
\par
We conclude this section by giving an estimate of matrices 
used for some linear systems later. 
\begin{lemma}
\label{theorem:summation2}
Let $l$ be a positive integer, 
and let $N$ be the same integer as in Lemma~\ref{theorem:summation1}. 
Suppose that 
$B=[b_{\alpha,\beta}]_{\lvert\alpha\rvert\leqslant{l},\beta\leqslant\alpha}$ 
is an $N{\times}N$ lower triangular matrix 
whose entries are suffixed by multi-indices. 
Note that $b_{\alpha,\beta}=0$ unless $\beta\leqslant\alpha$. 
For any 
$X=[X(\alpha)]_{\lvert\alpha\rvert\leqslant{l}}\in\mathbb{C}^N$, 
\begin{align}
& \lvert{}^tXB\bar{X}\rvert
\nonumber
\\
& \quad
  \leqslant
  2^nn!
  \lvert{X}\rvert^2
  \max_{\substack{\sigma\in{S_n}
        \\
        \nu=0,1,\dotsc,n
        \\
        {\sigma(1)}<\dotsb<{\sigma(\nu)}
        \\
        {\sigma(\nu+1)}<\dotsb<{\sigma(n)}}}
\nonumber
\\
& \qquad
  \times
  \left(
  \sum_{\substack{\alpha_{\sigma(1)}+\dotsb+\alpha_{\sigma(\nu)}
                  \leqslant
                  l-\alpha_{\sigma(\nu+1)}-\dotsb-\alpha_{\sigma(n)}
                  \\
                  \beta_{\sigma(1)}
                  =
                  \alpha_{\sigma(1)},\dotsc,2\alpha_{\sigma(1)}-1
                  \\
                  \dotsb
                  \\
                  \beta_{\sigma(\nu)}
                  =
                  \alpha_{\sigma(\nu)},\dotsc,2\alpha_{\sigma(\nu)}-1}}
  \max_{\substack{\beta_{\sigma(\nu+1)}
                  =
                  2\alpha_\sigma(\nu+1),\dotsc
                  \\
                  \dotsb
                  \\
                  \beta_{\sigma(n)}
                  =
                  2\alpha_{\sigma(n)},\dotsc}}
  \lvert{b_{\beta,\beta-\alpha}}\rvert^2
  \right)^{1/2}, 
\label{equation:summation2}
\end{align}
where $S_n$ is the $n$-dimensional symmetric group. 
\end{lemma}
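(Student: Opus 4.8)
The plan is to estimate the bilinear form ${}^tXB\bar X=\sum_{\lvert\alpha\rvert\leqslant l}\sum_{\beta\leqslant\alpha}b_{\alpha,\beta}X(\alpha)\overline{X(\beta)}$ by reindexing so that in each term the gap $\alpha-\beta$ becomes a fixed multi-index, and then by splitting the resulting sum into a main part, where a component of the index can be controlled by a convergent geometric-type series, and a tail part, which is dominated by a maximum. Concretely, I would first substitute $\gamma=\alpha-\beta\geqslant0$, so that $b_{\alpha,\beta}=b_{\beta+\gamma,\beta}$; writing $\beta=\alpha$ in the notation of the statement, the entry $b_{\beta,\beta-\alpha}$ appearing on the right-hand side is exactly the matrix entry with row $\beta$ and gap $\alpha$. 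By the Schwarz inequality in the pair $(X(\alpha),X(\beta))$ one has $\lvert{}^tXB\bar X\rvert\leqslant\lvert X\rvert^2$ times the operator-norm-type quantity $\bigl(\sum_{\gamma}\sup\lvert b_{\cdot,\cdot}\rvert^2\bigr)^{1/2}$; the real work is to organize which index one sums over (giving an $\ell^2$ factor) and which one takes a supremum over.

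The key combinatorial device is a one-dimensional dichotomy applied coordinate by coordinate, exactly as in the proof of Lemma~\ref{theorem:summation1}. For a single coordinate $\alpha_k$ ranging over $0,1,2,\dotsc$ and $\beta_k$ ranging over $0,\dotsc,\alpha_k$, I split according to whether $\beta_k<2\alpha_k$ (equivalently $\alpha_k\leqslant 2\alpha_k-\beta_k$, the ``main'' regime where $\beta_k$ and $\alpha_k$ are comparable, so one can sum in $\beta_k$ over a range of length at most $\alpha_k$ and pay only a factor $2$) or $\beta_k\geqslant 2\alpha_k$ (the ``tail'' regime, where $\alpha_k$ is small compared to $\beta_k$ and one takes a supremum over $\beta_k$). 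This is why the right-hand side of \eqref{equation:summation2} features, for a subset $\{\sigma(1),\dotsc,\sigma(\nu)\}$ of coordinates, a sum with $\beta_{\sigma(i)}=\alpha_{\sigma(i)},\dotsc,2\alpha_{\sigma(i)}-1$, and for the complementary coordinates $\{\sigma(\nu+1),\dotsc,\sigma(n)\}$ a maximum over $\beta_{\sigma(j)}=2\alpha_{\sigma(j)},\dotsc$. Iterating the dichotomy over the $n$ coordinates produces $2^n$ cases, each indexed by a choice of which coordinates are ``main'' and which are ``tail''; collecting the orderings within each group accounts for the $n!$ and the symmetric-group notation (the maximum over $\sigma\in S_n$ with the two increasing blocks is just a bookkeeping way to enumerate the $2^n$ subsets together with an order). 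The factor $2^n$ comes from the $n$ binary splits (the length-$\leqslant\alpha_k$ sum in each main coordinate being bounded by one $\ell^1$-in-$\beta_k$ factor that is in turn Schwarz-estimated), and the $n!$ from the reordering needed to perform the coordinatewise sums sequentially.

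In detail I would proceed as follows. First apply Schwarz in the doubled variable to get $\lvert{}^tXB\bar X\rvert\leqslant\lvert X\rvert^2\,\sup_{\lvert Y\rvert=1}\sum_{\alpha,\beta}\lvert b_{\alpha,\beta}\rvert\,\lvert Y(\alpha)\rvert\,\lvert Y(\beta)\rvert$, or more directly bound ${}^tXB\bar X$ by $\lvert X\rvert^2$ times the square root of $\sum_{\alpha}\bigl(\sum_{\beta\leqslant\alpha}\lvert b_{\alpha,\beta}\rvert\bigr)\bigl(\sum_{\beta\leqslant\alpha}\lvert b_{\alpha,\beta}\rvert\bigr)$-type expressions; the cleanest route is Schur's test, estimating the matrix norm of $\lvert B\rvert$ by $\sqrt{R C}$ where $R,C$ are the maximal absolute row and column sums, but organized so that after fixing the gap $\gamma=\alpha-\beta$ one sums over the row multi-index $\beta$ in the main coordinates and takes sup in the tail coordinates. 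Second, carry out the coordinatewise dichotomy: fix all coordinates but the $k$th, split the $\beta_k$-sum at $2\alpha_k$, bound the main piece by $(\sum_{\beta_k=\alpha_k}^{2\alpha_k-1}\lvert b\rvert^2)^{1/2}\cdot(\#\text{terms})^{1/2}\leqslant 2^{1/2}(\sum\lvert b\rvert^2)^{1/2}\cdot$(something absorbed), and the tail piece by the supremum. Third, iterate over $k=1,\dotsc,n$, at each stage peeling one coordinate, accumulating the factor $2$ each time (total $2^n$) and the reordering freedom (total $n!$), and noting that the constraint $\lvert\alpha\rvert\leqslant l$ transfers to the stated constraint $\alpha_{\sigma(1)}+\dotsb+\alpha_{\sigma(\nu)}\leqslant l-\alpha_{\sigma(\nu+1)}-\dotsb-\alpha_{\sigma(n)}$ on the main coordinates. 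Fourth, identify the surviving sum-of-squares with $\sum_{\beta}\lvert b_{\beta,\beta-\alpha}\rvert^2$ in the notation of the statement (the dummy $\beta$ there being the row index, $\alpha$ the gap), and take the maximum over the $2^n n!$ arrangements to get \eqref{equation:summation2}.

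The main obstacle I expect is purely notational rather than conceptual: keeping the bookkeeping of ``which index is summed, which is supremized'' consistent across the $n$ coordinates while respecting the single global constraint $\lvert\alpha\rvert\leqslant l$, and making sure that the sequential coordinatewise Schwarz applications do not lose more than the claimed $2^n n!$ and produce exactly the ranges $\beta_{\sigma(i)}=\alpha_{\sigma(i)},\dotsc,2\alpha_{\sigma(i)}-1$ for main coordinates and $\beta_{\sigma(j)}=2\alpha_{\sigma(j)},\dotsc$ for tail ones. A secondary subtlety is that, unlike Lemma~\ref{theorem:summation1}, here there is no factorial weight to exploit, so the convergence of the ``main'' sums has to come entirely from the $\ell^2$ structure of $X$ and the counting of terms, which is why one cannot avoid the supremum over tail coordinates and must accept the $\max$ over $S_n$ on the right-hand side. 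Once the one-coordinate case is set up carefully, the induction on $n$ is routine, so I would write the $n=1$ step in full and then indicate the iteration, exactly in the spirit of the preceding lemma's proof.
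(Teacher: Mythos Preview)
Your plan is essentially the paper's: reindex by the gap, dichotomize each coordinate, and apply Cauchy--Schwarz to factor out $\lvert X\rvert^2$. Two points need sharpening. First, your dichotomy is misstated in the original variables: if $\beta_k\in\{0,\dotsc,\alpha_k\}$ then $\beta_k<2\alpha_k$ is automatic, so there is nothing to split. In the original (row $\alpha$, column $\beta\leqslant\alpha$) variables the correct split is $\beta_k<\alpha_k/2$ versus $\beta_k\geqslant\alpha_k/2$; only after the substitution (new row $\beta:=\alpha$, new gap $\alpha:=\alpha-\beta$, entry $b_{\beta,\beta-\alpha}$) does this become the $\beta_k\in[\alpha_k,2\alpha_k-1]$ versus $\beta_k\geqslant2\alpha_k$ dichotomy visible on the right-hand side of \eqref{equation:summation2}. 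Second, the paper does not induct on $n$ or peel coordinates one at a time: it performs the full $2^n$-fold split at once (indexed by $(\sigma,\nu)$), bounds the sum over regions by $2^nn!$ times the maximum region, and then applies Schwarz \emph{twice}---first over the tail coordinates $\beta_{\sigma(\nu+1)},\dotsc,\beta_{\sigma(n)}$ (pulling the $\max\lvert b_{\beta,\beta-\alpha}\rvert$ out and separating $\lvert X(\beta)\rvert$ from $\lvert X(\beta-\alpha)\rvert$ in $\ell^2$), then over the main coordinates $\alpha_{\sigma(1)},\dotsc,\alpha_{\sigma(\nu)},\beta_{\sigma(1)},\dotsc,\beta_{\sigma(\nu)}$ (producing the $\ell^2$ sum of $\lvert b\rvert^2$ and another $\lvert X\rvert^2$). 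Your Schur-test/operator-norm detour is not needed and would not directly yield the specific $\ell^2$-in-main, $\max$-in-tail structure of the stated bound; the two-stage Schwarz on the already-dichotomized sum is both simpler and exactly what is required.
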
  
\begin{proof}
We split ${}^tXB\bar{X}$ into several pieces according 
to the index of the entries of $B$: 
\begin{align}
& \lvert{}^tXB\bar{X}\rvert
\nonumber
\\
& \quad
  =
  \left\lvert
  \sum_{\lvert\alpha\rvert\leqslant{l}}
  \sum_{\beta\leqslant\alpha}
  b_{\alpha,\beta}
  X(\alpha)\bar{X}(\beta)
  \right\rvert
\nonumber
\\
& \quad
  =
  \left\lvert
  \sum_{\lvert\alpha\rvert\leqslant{l}}
  \sum_{\substack{\sigma\in{S_n}
                  \\
                  \nu=0,1,\dotsc,n
                  \\
                  {\sigma(1)}<\dotsb<{\sigma(\nu)}
                  \\
                  {\sigma(\nu+1)}<\dotsb<{\sigma(n)}}}
  \sum_{\substack{\beta_{\sigma(1)}<\alpha_{\sigma(1)}
                  \\
                  \dotsb
                  \\
                  \beta_{\sigma(\nu)}<\alpha_{\sigma(\nu)}
                  \\
                  \beta_{\sigma(\nu+1)}\geqslant\alpha_{\sigma(\nu+1)}/2
                  \\
                  \dotsb
                  \\
                  \beta_{\sigma(n)}\geqslant\alpha_{\sigma(n)}/2}}
  b_{\alpha,\beta}X(\alpha)\bar{X}(\beta)  
  \right\rvert
\nonumber
\\
& \quad
  \leqslant
  2^nn!
  \max_{\substack{\sigma\in{S_n}
                  \\
                  \nu=0,1,\dotsc,n
                  \\
                  {\sigma(1)}<\dotsb<{\sigma(\nu)}
                  \\
                  {\sigma(\nu+1)}<\dotsb<{\sigma(n)}}}
  \sum_{\lvert\alpha\rvert\leqslant{l}}
  \sum_{\substack{\beta_{\sigma(1)}<\alpha_{\sigma(1)}
                  \\
                  \dotsb
                  \\
                  \beta_{\sigma(\nu)}<\alpha_{\sigma(\nu)}
                  \\
                  \beta_{\sigma(\nu+1)}\geqslant\alpha_{\sigma(\nu+1)}/2
                  \\
                  \dotsb
                  \\
                  \beta_{\sigma(n)}\geqslant\alpha_{\sigma(n)}/2}}
  \lvert{b_{\alpha,\beta}}\rvert
  \lvert{X(\alpha)}\rvert
  \lvert{X(\beta)}\rvert  
\nonumber
\\
& \quad
  =
  2^nn!
  \max_{\substack{\sigma\in{S_n}
                  \\
                  \nu=0,1,\dotsc,n
                  \\
                  {\sigma(1)}<\dotsb<{\sigma(\nu)}
                  \\
                  {\sigma(\nu+1)}<\dotsb<{\sigma(n)}}}
  \sum_{\lvert\alpha\rvert\leqslant{l}}
  \sum_{\substack{\lvert\beta\rvert\leqslant{l}
                  \\
                  \beta_{\sigma(1)}
                  =
                  \alpha_{\sigma(1)},\dotsc,2\alpha_{\sigma(1)}-1
                  \\
                  \dotsb
                  \\
                  \beta_{\sigma(\nu)}
                  =
                  \alpha_{\sigma(\nu)},\dotsc,2\alpha_{\sigma(\nu)}-1
                  \\
                  \beta_{\sigma(\nu+1)}
                  =
                  2\alpha_{\sigma(\nu+1)},\dotsb
                  \\
                  \dotsb
                  \\
                  \beta_{\sigma(n)}
                  =
                  2\alpha_{\sigma(n)},\dotsb}}
  \lvert{b_{\beta,\beta-\alpha}}\rvert
  \lvert{X(\beta)}\rvert
  \lvert{X(\beta-\alpha)}\rvert.  
\label{equation:311}
\end{align}
By the Schwarz inequality to the summation on 
$\beta_{\sigma(\nu+1)},\dotsc,\beta_{\sigma(n)}$, 
\eqref{equation:311} becomes
\begin{align}
  \lvert{}^tXB\bar{X}\rvert
& \leqslant
  2^nn!
  \max_{\substack{\sigma\in{S_n}
                  \\
                  \nu=0,1,\dotsc,n
                  \\
                  {\sigma(1)}<\dotsb<{\sigma(\nu)}
                  \\
                  {\sigma(\nu+1)}<\dotsb<{\sigma(n)}}}
  \sum_{\lvert\alpha\rvert\leqslant{l}}
  \sum_{\substack{\lvert\beta\rvert\leqslant{l}
                  \\
                  \beta_{\sigma(1)}
                  =
                  \alpha_{\sigma(1)},\dotsc,2\alpha_{\sigma(1)}-1
                  \\
                  \dotsb
                  \\
                  \beta_{\sigma(\nu)}
                  =
                  \alpha_{\sigma(\nu)},\dotsc,2\alpha_{\sigma(\nu)}-1}}
\nonumber
\\
& \quad
  \times
  \max_{\substack{\beta_{\sigma(\nu+1)}
                  =
                  2\alpha_{\sigma(\nu+1)},\dotsb
                  \\
                  \dotsb
                  \\
                  \beta_{\sigma(n)}
                  =
                  2\alpha_{\sigma(n)}/2,\dotsb}}
  \lvert{b_{\beta,\beta-\alpha}}\rvert
\nonumber
\\
& \qquad
  \times
  \left(
  \sum_{\substack{\beta_{\sigma(\nu+1)}
                  =
                  2\alpha_{\sigma(\nu+1)},\dotsb
                  \\
                  \dotsb
                  \\
                  \beta_{\sigma(n)}
                  =
                  2\alpha_{\sigma(n)},\dotsb}}
  \lvert{X(\beta)}\rvert^2
  \right)^{1/2}
\nonumber
\\
& \qquad\quad
  \times
  \left(
  \sum_{\substack{\beta_{\sigma(\nu+1)}
                  =
                  2\alpha_{\sigma(\nu+1)},\dotsb
                  \\
                  \dotsb
                  \\
                  \beta_{\sigma(n)}
                  =
                  2\alpha_{\sigma(n)},\dotsb}}
  \lvert{X(\beta-\alpha)}\rvert^2
  \right)^{1/2}.
\label{equation:312}
\end{align}
If we apply the Schwarz inequality to the summation on 
$\alpha_{\sigma(1)},\dotsb,\alpha_{\sigma(\nu)}$ 
and 
$\beta_{\sigma(1)},\dotsb,\beta_{\sigma(\nu)}$, 
then \eqref{equation:312} becomes

\begin{align*}
  \lvert{}^tXB\bar{X}\rvert
& \leqslant
  2^nn!
  \max_{\substack{\sigma\in{S_n}
                  \\
                  \nu=0,1,\dotsc,n
                  \\
                  \sigma(1)<\dotsb<\sigma(\nu)
                  \\
                  \sigma(\nu+1)<\dotsb<\sigma(n)}}
  \sum_{\alpha_{\sigma(\nu+1)}+\dotsb+\alpha_{\sigma(n)}\leqslant{l}}
\\
& \quad
  \times
  \left(
  \sum_{\substack{\alpha_{\sigma(1)}+\dotsb+\alpha_{\sigma(\nu)}
                  \leqslant
                  l-\alpha_{\sigma(\nu+1)}-\dotsb-\alpha_{\sigma(n)}
                  \\
                  \beta_{\sigma(1)}
                  =
                  \alpha_{\sigma(1)},\dotsc,2\alpha_{\sigma(1)}-1
                  \\
                  \dotsb
                  \\
                  \beta_{\sigma(\nu)}
                  =
                  \alpha_{\sigma(\nu)},\dotsc,2\alpha_{\sigma(\nu)}-1}}
  \max_{\substack{\beta_{\sigma(\nu+1)}
                  =
                  2\alpha_\sigma(\nu1),\dotsc
                  \\
                  \dotsb
                  \\
                  \beta_{\sigma(n)}
                  =
                  2\alpha_{\sigma(n)},\dotsc}}
  \lvert{b_{\beta,\beta-\alpha}}\rvert^2
  \right)^{1/2}
\\
& \qquad
  \times
  \left(
  \sum_{\substack{\beta_{\sigma(1)}
                  =
                  \alpha_{\sigma(1)},\dotsc,2\alpha_{\sigma(1)}-1
                  \\
                  \dotsb
                  \\
                  \beta_{\sigma(\nu)}
                  =
                  \alpha_{\sigma(\nu)},\dotsc,2\alpha_{\sigma(\nu)}-1
                  \\
                  \beta_{\sigma(\nu+1)}
                  =
                  2\alpha_{\sigma(\nu+1)},\dotsb
                  \\
                  \dotsb
                  \\
                  \beta_{\sigma(n)}
                  =
                  2\alpha_{\sigma(n)}/2,\dotsb}}
  \lvert{X(\beta)}\rvert^2
  \right)^{1/2}
\\
& \qquad\quad
  \times
  \left(
  \sum_{\substack{\alpha_{\sigma(1)}+\dotsb+\alpha_{\sigma(\nu)}
                  \leqslant
                  l-\alpha_{\sigma(\nu+1)}-\dotsb-\alpha_{\sigma(n)}
                  \\
                  \beta_{\sigma(\nu+1)}
                  =
                  2\alpha_{\sigma(\nu+1)},\dotsb
                  \\
                  \dotsb
                  \\
                  \beta_{\sigma(n)}
                  =
                  2\alpha_{\sigma(n)}/2,\dotsb}}
  \lvert{X(\beta-\alpha)}\rvert^2
  \right)^{1/2}
\\
& \leqslant
  2^nn!\lvert{X}\rvert^2
  \max_{\substack{\sigma\in{S_n}
                  \\
                  \nu=0,1,\dotsc,n
                  \\
                  \sigma(1)<\dotsb<\sigma(\nu)
                  \\
                  \sigma(\nu+1)<\dotsb<\sigma(n)}}
  \sum_{\alpha_{\sigma(\nu+1)}+\dotsb+\alpha_{\sigma(n)}\leqslant{l}}
\\
& \quad
  \times
  \left(
  \sum_{\substack{\alpha_{\sigma(1)}+\dotsb+\alpha_{\sigma(\nu)}
                  \leqslant
                  l-\alpha_{\sigma(\nu+1)}-\dotsb-\alpha_{\sigma(n)}
                  \\
                  \beta_{\sigma(1)}
                  =
                  \alpha_{\sigma(1)},\dotsc,2\alpha_{\sigma(1)}-1
                  \\
                  \dotsb
                  \\
                  \beta_{\sigma(\nu)}
                  =
                  \alpha_{\sigma(\nu)},\dotsc,2\alpha_{\sigma(\nu)}-1}}
  \max_{\substack{\beta_{\sigma(\nu+1)}
                  =
                  2\alpha_\sigma(\nu1),\dotsc
                  \\
                  \dotsb
                  \\
                  \beta_{\sigma(n)}
                  =
                  2\alpha_{\sigma(n)},\dotsc}}
  \lvert{b_{\beta,\beta-\alpha}}\rvert^2
  \right)^{1/2}.
\end{align*}
This completes the proof. 
\end{proof}

\section{Linear systems}
\label{section:linear}
In this section we recall the $L^2$-well-posedness for some systems 
developed in \cite{chihara}. 
Consider the initial value problem of the form
\begin{alignat}{2}
   (I_{2N}\p_t-iE_{2N}\Delta+\sum_{k=1}^nB^k(t,x)\p_k)w 
 & = 
   g(t,x) 
&
   \quad \text{in} \quad 
 & (0,T)\times\mathbb{R}^n, 
\label{equation:pde2}
\\ 
   w(0,x) 
 & = 
   w_0(x) 
& 
   \quad \text{in} \quad 
 & \mathbb{R}^n,  
\label{equation:data2}
\end{alignat}
where 
$w(t,x)$ is a $\mathbb{C}^{2N}$-valued unknown function of 
$(t,x)\in[0,T]\times\mathbb{R}^n$, 
$I_p$ is $p{\times}p$ identity matrix, 
$$
E_{2N}=[I_N]\oplus[-I_N], 
\quad
N
=
\sum_{j=0}^l
\frac{(j+n-1)!}{j!(n-1)!}
$$
which is the number of kinds of multi-indices of order at most $l$, and 
$$
B^k(t,x)
=
\begin{bmatrix}
B^{k,1}(t,x) & B^{k,2}(t,x)
\\ 
B^{k,3}(t,x) & B^{k,4}(t,x)
\end{bmatrix},
$$
$$
\quad
B^{k,m}(t,x)
=
[b_{\alpha,\beta}^{k,m}(t,x)
]_{\lvert\alpha\rvert,\lvert\beta\rvert\leqslant{l}}, 
\quad
b_{\alpha,\beta}^{k,m}(t,x)=0
\quad\text{unless}\quad
\beta\leqslant\alpha. 
$$
We here assume that the Doi-type conditions, that is, 
there exists a nonnegative function 
$\phi(t,y)$ on $[0,T]\times\mathbb{R}$ such that 
$\phi(t,y){\in}C([0,T];\mathscr{B}^2(\mathbb{R}^2))$, 
\begin{equation}
\sup_{t\in[0,T]}
\int_{-\infty}^{+\infty}
\phi(t,y)dy
+
\sup_{t\in[0,T]}
\left\lvert
\int_{-\infty}^{+\infty}
\p_t\phi(t,y)dy
\right\rvert
<+\infty, 
\label{equation:doi1}
\end{equation}
\begin{align}
& 2^nn!
  \sum_{\substack{m=1,4
        \\
        k=1,\dotsc,n}}
  \max_{\substack{\sigma\in{S_n}
        \\
        \nu=0,1,\dotsc,n
        \\
        {\sigma(1)}<\dotsb<{\sigma(\nu)}
        \\
        {\sigma(\nu+1)}<\dotsb<{\sigma(n)}}}
\nonumber
\\
& \quad
  \times
  \left(
  \sum_{\substack{\alpha_{\sigma(1)}+\dotsb+\alpha_{\sigma(\nu)}
                  \leqslant
                  l-\alpha_{\sigma(\nu+1)}-\dotsb-\alpha_{\sigma(n)}
                  \\
                  \beta_{\sigma(1)}
                  =
                  \alpha_{\sigma(1)},\dotsc,2\alpha_{\sigma(1)}-1
                  \\
                  \dotsb
                  \\
                  \beta_{\sigma(n)}
                  =
                  \alpha_{\sigma(\nu)},\dotsc,2\alpha_{\sigma(\nu)}-1}}
  \max_{\substack{\beta_{\sigma(\nu+1)}
                  =
                  2\alpha_\sigma(\nu1),\dotsc
                  \\
                  \dotsb
                  \\
                  \beta_{\sigma(n)}
                  =
                  2\alpha_{\sigma(n)},\dotsc}}
  \lvert{b_{\beta,\beta-\alpha}^{k,m}(t,x)}\rvert^2
  \right)^{1/2}
\nonumber
\\
& \leqslant
  \phi(t,x_j)
\label{equation:doi2}
\end{align}
for 
$(t,x)=(t,x_1,\dotsc,x_n)\in[0,T]\times\mathbb{R}^n$, 
$j=1,\dotsc,n$. 
One can prove that 
the initial value problem 
\eqref{equation:pde2}-\eqref{equation:data2} 
is $L^2$-well-posed by using the block-diagonalization technique 
in \cite{chihara}, 
and Doi's transformation in \cite{doi1}. 
See \cite{chihara} for the detail. 
To state the energy inequality needed later, 
we here introduce some pseudodifferential operators as follows: 
\begin{align*}
  \Lambda(t)
& =
  I_{2N}
  +
  \frac{1}{2}
  E_{2N}
  \sum_{k=1}^n 
  \begin{bmatrix}
  0 & B^{k,2}(t,x)
  \\
  B^{k,3}(t,x) & 0 
  \end{bmatrix}
  \p_k
  (\mu^2-\Delta)^{-1},
\\
  K(t)
& =
  [I_Nk_1(t,x,D)]\oplus[I_Nk_1^\prime(t,x,D)], 
\\
  k_1(t,x,\xi)
& =
  e^{-p(t,x,\xi)}, 
  \quad
  k_1^\prime(t,x,\xi)
  =
  e^{p(t,x,\xi)}, 
\\
  p(t,x,\xi)
& =
  \sum_{k=1}^n
  \int_0^{x_k}\phi(t,y)dy
  \xi_k(\mu^2+\lvert\xi\rvert^2)^{-1/2}. 
\end{align*}
It is easy to see that 
$K(t)\Lambda(t)$ is automorphic on $(L^2)^{2N}$ 
provided that $\mu>0$ is sufficiently large. 
More precisely, 
there exists a positive constants 
$M$ and $\mu$ depending only on 
\begin{align*}
& \sup_{t\in[0,T]}
  \lVert\phi(t,\cdot)\rVert_{\mathscr{B}^2}
\\
  +
& \sup_{t\in[0,T]}
  \int_{-\infty}^{+\infty}
  \phi(t,y)dy
  +
  \sup_{t\in[0,T]}
  \left\lvert
  \int_{-\infty}^{+\infty}
  \p_t\phi(t,y)dy
  \right\rvert
\\
  +
& 2^nn!
  \sum_{\substack{m=1,4
        \\
        k=1,\dotsc,n}}
  \max_{\substack{\sigma\in{S_n}
        \\
        \nu=0,1,\dotsc,n
        \\
        {\sigma(1)}<\dotsb<{\sigma(\nu)}
        \\
        {\sigma(\nu+1)}<\dotsb<{\sigma(n)}}}
\\
  \times
& \left(
  \sum_{\substack{\alpha_{\sigma(1)}+\dotsb+\alpha_{\sigma(\nu)}
                  \leqslant
                  l-\alpha_{\sigma(\nu+1)}-\dotsb-\alpha_{\sigma(n)}
                  \\
                  \beta_{\sigma(1)}
                  =
                  \alpha_{\sigma(1)},\dotsc,2\alpha_{\sigma(1)}-1
                  \\
                  \dotsb
                  \\
                  \beta_{\sigma(n)}
                  =
                  \alpha_{\sigma(\nu)},\dotsc,2\alpha_{\sigma(\nu)}-1}}
  \max_{\substack{\beta_{\sigma(\nu+1)}
                  =
                  2\alpha_\sigma(\nu1),\dotsc
                  \\
                  \dotsb
                  \\
                  \beta_{\sigma(n)}
                  =
                  2\alpha_{\sigma(n)},\dotsc}}
  \lVert
  b_{\beta,\beta-\alpha}^{k,m}(t,\cdot)
  \rVert^2_{\mathscr{B}^2}
  \right)^{1/2}
\end{align*}
such that 
$$
M^{-1}\lVert{w}\rVert
\leqslant
\lVert{K(t)\Lambda(t)w}\rVert
\leqslant
M\lVert{w}\Vert.
$$
Now we state $L^2$-well-posedness. 
\begin{lemma}
\label{theorem:doi} 
Assume \eqref{equation:doi1} and \eqref{equation:doi2}. 
Then, the initial value problem 
{\rm \eqref{equation:pde2}}-{\rm \eqref{equation:data2}} 
is $L^2$-well-posed, that is, 
for any $w_0\in(L^2)^{2N}$ 
and 
$g{\in}L^1(0,T;(L^2)^{2N})$, 
{\rm \eqref{equation:pde2}}-{\rm \eqref{equation:data2}} 
has a unique solution $w$ belonging to 
$C([0,T];(L^2)^{2N})$. 
Moreover, $w$ satisfies 
\begin{align}
\lVert{K(t)\Lambda(t)w}\rVert^2
& =
  \lVert{K(t)\Lambda(t)w_0}\rVert^2
\nonumber
\\
& +
  \int_0^t
  2\re
  (Q(\tau)K(\tau)\Lambda(\tau)w(\tau),
   K(\tau)\Lambda(\tau)w(\tau))
  d\tau
\nonumber
\\
& +
  \int_0^t
  2\re
  (R(\tau)K(\tau)\Lambda(\tau)w(\tau),
   K(\tau)\Lambda(\tau)w(\tau))
  d\tau
\nonumber
\\
& +
  \int_0^t
  2\re
  (K(\tau)\Lambda(\tau)w(\tau),
   K(\tau)\Lambda(\tau)g(\tau))
  d\tau, 
\label{equation:identity}
\end{align}
for all $t\in[0,T]$, where 
$$  
\sigma(Q(t))(x,\xi)
=
\sum_{j=1}^n
2\phi(t,x_j)\xi_j^2(\mu^2+\lvert\xi\rvert^2)^{-1/2}
+
i\sum_{j=1}^n
B^{k,\text{diag}}(t,x)\xi_j,    
$$
$$
B^{k,\text{diag}}(t,x)
=
\begin{bmatrix}
B^{k,1}(t,x) & 0
\\
0 & B^{k,4}(t,x)
\end{bmatrix},
$$
$$
\sup_{t\in[0,T]}
\lVert{R(t)w}\rVert
\leqslant
CM\lVert{w}\rVert.
$$
\end{lemma}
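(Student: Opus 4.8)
The proof follows the scheme of \cite{chihara}: we conjugate \eqref{equation:pde2} by the automorphism $K(t)\Lambda(t)$ of $(L^2)^{2N}$, derive the energy identity \eqref{equation:identity} as an \emph{a priori} estimate, and then obtain existence and uniqueness by a standard regularization argument.

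First I would set $v=K(t)\Lambda(t)w$ and compute the equation satisfied by $v$. The operator $\Lambda(t)$ differs from the identity by an operator whose symbol is of size $O(\mu^{-1})$ uniformly, so for $\mu$ sufficiently large $\Lambda(t)$ is an automorphism of $(L^2)^{2N}$ close to the identity; it is chosen so that $[-iE_{2N}\Delta,\Lambda(t)]$ cancels, modulo $L^2$-bounded operators, the off-diagonal first-order coefficients $B^{k,2},B^{k,3}$. This is the block-diagonalization of \cite{chihara}, and the error operators are controlled by Lemma~\ref{theorem:algebra}. Next, $K(t)$ is Doi's transformation from \cite{doi1}: since $\p_{x_k}p(t,x,\xi)=\phi(t,x_k)\xi_k(\mu^2+\lvert\xi\rvert^2)^{-1/2}$, the commutator $[-iE_{2N}\Delta,K(t)]$ produces, at the principal level, the nonnegative symbol $\sum_{j=1}^n2\phi(t,x_j)\xi_j^2(\mu^2+\lvert\xi\rvert^2)^{-1/2}$; assembling this with the remaining block-diagonal first-order part $\sum_k B^{k,\text{diag}}\p_k$ yields precisely the operator $Q(t)$ with the stated principal symbol. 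All other contributions — from the symbol calculus, from $\p_t\Lambda$ and $\p_tK$, and from the conjugation errors — are collected into an $L^2$-bounded operator $R(t)$ whose norm is dominated by $CM$, where $M$ is attached to the invertibility of $K(t)\Lambda(t)$ and to the quantities listed before the statement.

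The key step is to apply the sharp G\aa rding inequality (Lemma~\ref{theorem:garding}) to $Q(t)$, whose entries lie in $\mathscr{B}^2S^1$. For this one must check that $\sigma(Q(t))(x,\xi)+\sigma(Q(t))(x,\xi)^\ast\geqslant0$ for $\lvert\xi\rvert\geqslant R$ with some $R>0$, i.e. that the Hermitian matrix $\frac{i}{2}\sum_k(B^{k,\text{diag}}-(B^{k,\text{diag}})^\ast)\xi_k$ is dominated by $\sum_j 2\phi(t,x_j)\xi_j^2(\mu^2+\lvert\xi\rvert^2)^{-1/2}I_{2N}$ for large $\lvert\xi\rvert$. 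This is exactly where the Doi condition \eqref{equation:doi2} and the matrix-norm bound of Lemma~\ref{theorem:summation2} enter: Lemma~\ref{theorem:summation2} bounds $\lvert{}^tXB^{k,\text{diag}}\bar{X}\rvert$ by the max-over-permutations sum appearing on the left-hand side of \eqref{equation:doi2}, and \eqref{equation:doi2} bounds the latter by $\phi(t,x_j)$; combining these with an elementary comparison of $\sum_j\phi(t,x_j)\xi_j^2(\mu^2+\lvert\xi\rvert^2)^{-1/2}$ with the off-diagonal term, and taking $\mu$ large to absorb the low-frequency discrepancy, gives the required semi-positivity. Lemma~\ref{theorem:garding} then yields $\re(Q(t)v,v)\geqslant-C_1A(Q(t))\lVert v\rVert^2\geqslant-C\lVert v\rVert^2$ with $C$ depending only on the listed quantities. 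I expect this verification — matching the combinatorial structure of the lower-triangular blocks $B^{k,m}$, the summation estimate of Lemma~\ref{theorem:summation2}, and the Doi condition \eqref{equation:doi2} — to be the main technical obstacle.

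Granting the above, differentiating $\lVert v(t)\rVert^2$ and using that $v$ solves $(\p_t-iE_{2N}\Delta)v=Q(t)v+R(t)v+K(t)\Lambda(t)g$ modulo the skew-adjoint $-iE_{2N}\Delta$ gives $\frac{d}{dt}\lVert v\rVert^2=2\re(Q(t)v,v)+2\re(R(t)v,v)+2\re(v,K(t)\Lambda(t)g)$, whose integration is \eqref{equation:identity}; a Gronwall argument together with $M^{-1}\lVert w\rVert\leqslant\lVert K(t)\Lambda(t)w\rVert\leqslant M\lVert w\rVert$ yields the \emph{a priori} bound $\sup_{[0,T]}\lVert w(t)\rVert\leqslant C(\lVert w_0\rVert+\lVert g\rVert_{L^1(0,T;(L^2)^{2N})})$. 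Finally, existence is obtained by the standard method: regularize the coefficients $B^k$ and the data, solve the smoothed problems (for which $(L^2)^{2N}$-well-posedness is classical), use the uniform bound to extract a limit in $C([0,T];(L^2)^{2N})$, and verify it solves \eqref{equation:pde2}-\eqref{equation:data2}; uniqueness and the continuity $w\in C([0,T];(L^2)^{2N})$ follow directly from \eqref{equation:identity} applied to the difference of two solutions with $w_0=0$, $g=0$, and from the equation itself. See \cite{chihara} and \cite{doi1} for the details of the block-diagonalization and of Doi's transformation.
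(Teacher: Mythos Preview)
Your proposal is correct and follows essentially the same approach as the paper: the paper's own proof simply states that it is ``basically the same as that of \cite[Lemma~6]{chihara}'' with the sole addition that Lemma~\ref{theorem:summation2} is used to evaluate the coefficient matrices, and you have correctly unpacked exactly this scheme --- block-diagonalization via $\Lambda(t)$, Doi's transformation via $K(t)$, and the application of Lemma~\ref{theorem:summation2} together with \eqref{equation:doi2} to control the quadratic form $\lvert{}^tXB^{k,\text{diag}}\bar X\rvert$ in the sharp G\aa rding step.
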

\begin{proof}
The proof of Lemma~\ref{theorem:doi} is basically same as that of 
\cite[Lemma~6]{chihara}. 
In particular, 
we make use of Lemma~\ref{theorem:summation2} 
to evaluate the matrices of coefficients. 
We here omit the detail. 
\end{proof}

\section{Nonlinear estimates}
\label{section:nonlinear}
This section is devoted to estimating nonlinearity. 
For the sake of convenience, we use the following notation. 
$$
X^l_{\theta,s,r}
=
\left(
\sum_{\lvert\alpha\rvert\leqslant{l}}
\frac{r^{2\lvert\alpha\rvert}}{\alpha_\ast!^{2s}}
\lVert{J^\alpha}u\rVert_\theta^2
\right)^{1/2}, 
$$
$$
r>0,
\quad 
\alpha_\ast!
=
\prod_{j=1}^n
\max\{\alpha_j-1,0\}!, 
\quad 
(\alpha)
=
\prod_{j=1}^n
\max\{\alpha_j,1\}. 
$$
\par
First, we obtain an estimate related to the commutator 
$[J^\alpha,\p_j]$. 
\begin{lemma}
\label{theorem:commutation}
For $u\in\mathscr{S}$ and $j=1,\dotsc,n$, 
\begin{equation}
\left(
\sum_{\lvert\alpha\rvert\leqslant{l}}
\frac{r^{2\lvert\alpha\rvert}}{\alpha_\ast!^{2s}}
\lVert{J^\alpha}\p_ju\rVert_{\theta-1}^2
\right)^{1/2} 
\leqslant
\sqrt{2}(1+2r)X^l_{\theta,s,r}. 
\label{equation:commutation}
\end{equation}
\end{lemma}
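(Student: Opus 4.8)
The plan is to commute $\p_j$ past $J^\alpha$ and to control the resulting commutator. The crucial algebraic fact is that $J_k = x_k + 2it\p_k$ satisfies $[\p_j, J_k] = \delta_{jk}$, so $\p_j$ commutes with $J_k$ for $k \ne j$ and $\p_j J_j = J_j \p_j + 1$. Iterating in the single variable $x_j$, one gets the standard identity $\p_j J_j^{\alpha_j} = J_j^{\alpha_j}\p_j + \alpha_j J_j^{\alpha_j - 1}$, hence, writing $\alpha = \alpha' + \alpha_j e_j$ where $\alpha'$ collects the other components,
$$
J^\alpha \p_j u = \p_j J^\alpha u - \alpha_j J^{\alpha - e_j} u,
$$
(with the convention that the second term is absent when $\alpha_j = 0$). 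So $J^\alpha \p_j u$ is a sum of two terms, one of $H^{\theta-1}$-norm at most $\lVert J^\alpha u\rVert_\theta$ (since $\p_j\colon H^\theta \to H^{\theta-1}$ is bounded with norm $\le 1$), and one of norm $\alpha_j \lVert J^{\alpha-e_j}u\rVert_{\theta-1} \le \alpha_j \lVert J^{\alpha-e_j}u\rVert_\theta$.

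Next I would insert this into the weighted $\ell^2$-sum. First use $(a+b)^2 \le 2a^2 + 2b^2$ to split the left-hand side of \eqref{equation:commutation} into $\sqrt 2$ times $\bigl(\sum r^{2|\alpha|}\alpha_\ast!^{-2s}\lVert\p_j J^\alpha u\rVert_{\theta-1}^2\bigr)^{1/2}$ plus $\sqrt 2$ times $\bigl(\sum r^{2|\alpha|}\alpha_\ast!^{-2s}\alpha_j^2\lVert J^{\alpha-e_j}u\rVert_\theta^2\bigr)^{1/2}$. The first of these is bounded by $X^l_{\theta,s,r}$ directly. For the second, reindex $\beta = \alpha - e_j$, so $|\alpha| = |\beta| + 1$ and $\alpha_j = \beta_j + 1$; the extra factor $r^{2|\alpha|} = r^2 \cdot r^{2|\beta|}$ produces the $2r$ in the bound, and one needs the ratio $\alpha_j^2 \alpha_\ast!^{-2s}/\beta_\ast!^{-2s}$ to be controlled. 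Since $\alpha_\ast! = \beta_{j,\ast}! \prod_{k\ne j}\beta_{k,\ast}!$ where, for $\alpha_j = \beta_j + 1$, one has $\max\{\alpha_j - 1, 0\}! = \beta_j!$ while $\max\{\beta_j-1,0\}!$ divides $\beta_j!$ with quotient $\beta_j^\dagger = \max\{\beta_j,1\}$, one gets $\alpha_\ast! = \beta_\ast! \cdot \beta_j^\dagger$, so $\alpha_\ast!^{-2s}/\beta_\ast!^{-2s} = (\beta_j^\dagger)^{-2s} \le (\beta_j^\dagger)^{-2} \le \alpha_j^{-2}$ using $s \ge 1$ and $\alpha_j = \beta_j + 1 \le 2\beta_j^\dagger$. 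Hence the offending factor $\alpha_j^2 \cdot \alpha_\ast!^{-2s}/\beta_\ast!^{-2s} \le 4$, which after extending the $\beta$-sum to all $|\beta|\le l$ yields $2r X^l_{\theta,s,r}$ for the second piece. Collecting, the total is $\sqrt 2(X^l_{\theta,s,r} + 2r X^l_{\theta,s,r}) = \sqrt 2(1+2r)X^l_{\theta,s,r}$, which is \eqref{equation:commutation}.

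The only point requiring care — and the main obstacle — is the bookkeeping of the factorial weights: verifying that $\alpha_\ast!^{-2s}\alpha_j^2 \lesssim \beta_\ast!^{-2s}$ after the shift $\alpha \mapsto \alpha - e_j = \beta$, which hinges on the elementary identity $\max\{\alpha_j-1,0\}! = \beta_j^\dagger \cdot \max\{\beta_j-1,0\}!$ and on $s \ge 1$. Everything else is the Plancherel bound $\lVert\p_j v\rVert_{\theta-1} \le \lVert v\rVert_\theta$ (equivalently $|\xi_j|\langle\xi\rangle^{\theta-1} \le \langle\xi\rangle^\theta$), the triangle inequality in $\ell^2$, and reindexing. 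I would also note at the outset that it suffices to prove the estimate for $u \in \mathscr S$, which is granted in the hypothesis, so all sums are finite and all manipulations are legitimate.
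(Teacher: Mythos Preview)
Your argument is essentially identical to the paper's: compute $[J^\alpha,\p_j]=-\alpha_j J^{\alpha-e_j}$, split via $(a+b)^2\le 2a^2+2b^2$, bound the first piece by $X^l_{\theta,s,r}$, and reindex the commutator piece by $\beta=\alpha-e_j$, controlling the weight ratio by $(\beta_j+1)^2/(\beta_j^\dagger)^{2s}\le (\beta_j+1)^2/\max\{\beta_j^2,1\}\le 4$. One small slip in your write-up: the intermediate inequality ``$(\beta_j^\dagger)^{-2}\le\alpha_j^{-2}$'' is the wrong way around (since $\beta_j^\dagger\le\alpha_j$), but your intended conclusion $\alpha_j^2(\beta_j^\dagger)^{-2s}\le 4$ follows directly from $\alpha_j=\beta_j+1\le 2\beta_j^\dagger$ and $s\ge 1$, so the proof stands.
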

\begin{proof}
A simple computation gives 
$$
[J^\alpha,\p_j]
=
\begin{cases}
-\alpha_jJ^{\alpha-e_j} & \alpha_j\ne0
\\
0 & \alpha_j=0 
\end{cases}
$$ 
On one hand, when $\alpha_j=0$, we have 
\begin{equation}
\lVert{J^\alpha\p_ju}\rVert_{\theta-1}
=
\lVert{\p_jJ^\alpha}u\rVert_{\theta-1}
\leqslant
\lVert{J^\alpha}u\rVert_{\theta}.
\label{equation:zerocase} 
\end{equation}
On the other hand, when $\alpha\ne0$, we have 
\begin{align}
  \lVert{J^\alpha\p_ju}\rVert_{\theta-1}
& \leqslant
  \lVert{\p_jJ^\alpha}u\rVert_{\theta-1}
  +
  \alpha_j
  \lVert{J^{\alpha-e_j}u}\rVert_{\theta-1}
\nonumber
\\
& \leqslant
  \lVert{J^\alpha}u\rVert_{\theta}
  +
  \alpha_j
  \lVert{J^{\alpha-e_j}u}\rVert_{\theta}.
\label{equation:nonzerocase} 
\end{align}
Substituting \eqref{equation:zerocase} and \eqref{equation:nonzerocase} 
into the left hand side of \eqref{equation:commutation}, 
we deduce 
\begin{align*}
& \left(
  \sum_{\lvert\alpha\rvert\leqslant{l}}
  \frac{r^{2\lvert\alpha\rvert}}{\alpha_\ast!^{2s}}
  \lVert{J^\alpha\p_ju}\rVert_{\theta-1}^2
  \right)^{1/2} 
\\
& \leqslant
  \sqrt{2}
  \left(
  \sum_{\lvert\alpha\rvert\leqslant{l}}
  \frac{r^{2\lvert\alpha\rvert}}{\alpha_\ast!^{2s}}
  \lVert{J^\alpha}u\rVert_{\theta}^2
  \right)^{1/2} 
  +
  \sqrt{2}
  \left(
  \sum_{\lvert\alpha\rvert\leqslant{l}}
  \frac{r^{2\lvert\alpha\rvert}\alpha_j^2}{\alpha_\ast!^{2s}}
  \lVert{J^{\alpha-e_j}u}\rVert_{\theta}^2
  \right)^{1/2} 
\\
& =
  \sqrt{2}
  \left(
  \sum_{\lvert\alpha\rvert\leqslant{l}}
  \frac{r^{2\lvert\alpha\rvert}}{\alpha_\ast!^{2s}}
  \lVert{J^\alpha}u\rVert_{\theta}^2
  \right)^{1/2} 
  +
  \sqrt{2}r
  \left(
  \sum_{\lvert\beta\rvert\leqslant{l-1}}
  \frac{r^{2\lvert\beta\rvert}(\beta_j+1)^2}{(\beta+e_j)_\ast!^{2s}}
  \lVert{J^{\beta}u}\rVert_{\theta}^2
  \right)^{1/2} 
\\
& \leqslant
  \sqrt{2}(1+2r)X^l_{\theta,s,r}.
\end{align*}
Here we used 
$(\beta_j+1)^2/\max\{\beta_j^2,1\}\leqslant4$. 
\end{proof}
\par
Secondly, we show the lower order estimates of the nonlinearity. 
\begin{lemma}
\label{theorem:lower}
Let $\theta>n/2+2$. Set $\psi=\lvert{x}\rvert^2/4t$ and 
\begin{align*}
  f_{\theta,\alpha}
& =
  \langle{D}\rangle^\theta
  J^\alpha
  f(u,\p{u})
\\
& -
  \sum_{j=1}^n
  \sum_{\alpha^\prime\leqslant\alpha}
  \frac{\alpha!}{\alpha^\prime!(\alpha-\alpha^\prime)!}
\\
& \quad
  \times
  \biggl\{
  (2it)^{\lvert\alpha^\prime\rvert}
  \p^{\alpha^\prime}
  \frac{\p{f}}{\p{v_j}}
  (e^{-i\psi}u,\p{e^{-i\psi}\p{u}})
  \p_j\langle{D}\rangle^\theta{J^{\alpha-\alpha^\prime}}u
\\
& \quad\qquad
  +
  (-1)^{\lvert\alpha-\alpha^\prime\rvert}
  \p^{\alpha^\prime}
  \left(
  e^{2i\psi}
  \p^{\alpha^\prime}
  \frac{\p{f}}{\p\bar{v}_j}
  (e^{-i\psi}u,\p{e^{-i\psi}\p{u}})
  \right)
  \p_j
  \overline{\langle{D}\rangle^\theta{J^{\alpha-\alpha^\prime}}u}
  \biggr\}.    
\end{align*}
Then, there exist a positive constant 
$C_{\theta,n}$ 
such that for any 
$u\in\mathscr{S}$ and $l\in\mathbb{N}$, 
$$
\left(
\sum_{\lvert\alpha\rvert\leqslant{l}}
\frac{r^{2\lvert\alpha\rvert}}{\alpha_\ast!^{2s}}
\lVert{f_{\theta,\alpha}}\rVert^2
\right)^{1/2} 
\leqslant
C_{\theta,n}
\sum_{p=1}^\infty
A_p
(C_{\theta,n}X^{l-1}_{\theta,s,r})^{2p}
(C_{\theta,n}X^l_{\theta,s,r}).
$$
\end{lemma}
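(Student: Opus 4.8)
The plan is to expand $J^\alpha f(u,\partial u)$ using the series \eqref{equation:nonlinearity} for $f$, the chain rule for the weighted operators $J_k = x_k + 2it\partial_k$, and then extract the top-order terms, which is exactly what $f_{\theta,\alpha}$ subtracts off; the point is that those subtracted terms are the ``bad'' first-derivative-on-highest-$J$-power contributions, so that $f_{\theta,\alpha}$ contains only products in which every factor is hit by fewer than $|\alpha|$ copies of $J$, or the top-order factor carries at most $\theta$ derivatives rather than $\theta+1$. First I would use the gauge invariance \eqref{equation:gauge}: writing $\psi = |x|^2/4t$, one has the conjugation identity $J_k = e^{i\psi}(2it\partial_k)e^{-i\psi}$, so $J^\alpha = (2it)^{|\alpha|}e^{i\psi}\partial^\alpha e^{-i\psi}$, and by \eqref{equation:gauge} the nonlinearity transforms as $e^{i\psi}f(e^{-i\psi}u,e^{-i\psi}\partial u)$; combined with the commutation relation $e^{-i\psi}\partial_j = (\partial_j - i(\partial_j\psi))e^{-i\psi}$ this is precisely how the factors $e^{-i\psi}u$, $\partial(e^{-i\psi}\partial u)$ and the phase $e^{2i\psi}$ enter the definition of $f_{\theta,\alpha}$. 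Applying $\partial^\alpha$ by the Leibniz rule distributes the $\alpha$ derivatives over the arguments of $f$; the term where all of $\partial^{\alpha-\alpha'}$ lands on a single highest-order factor, together with the $\langle D\rangle^\theta$ commutator from Lemma~\ref{theorem:kato-ponce} or Lemma~\ref{theorem:leibniz}, is the top-order piece that has been removed.

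Next, for each fixed monomial $z^\alpha\bar z^\beta$ with $|\alpha|=p+1$, $|\beta|=p$ in \eqref{equation:nonlinearity}, the remainder $f_{\theta,\alpha}$ is a finite sum of terms, each of which is a product of $2p+1$ factors of the form $\langle x\rangle^{-\text{(something)}}\partial^{\gamma}J^{\delta}(\text{$u$ or $\partial u$})$ — all but one of these having $|\delta| \le l-1$, and the exceptional one having $|\delta|\le l$ but only $\theta$ (not $\theta+1$) derivatives. I would estimate such a product in $L^2$ by putting the exceptional factor in $H^\theta$ and the remaining $2p$ factors in $L^\infty$, using the Sobolev embedding $H^{\theta-1}\hookrightarrow L^\infty$ (valid since $\theta-1 > n/2 + 1 > n/2$) and the generalized Leibniz/product estimates \eqref{equation:chain}--\eqref{equation:leibniz} of Lemma~\ref{theorem:leibniz} to absorb the $\langle D\rangle^\theta$ and $p(D) = \partial_j\langle D\rangle^\theta$ operators onto one factor at a time. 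The commutator estimate Lemma~\ref{theorem:commutation} converts $\|J^\delta\partial_j u\|_{\theta-1}$-type norms back into the norm $X^l_{\theta,s,r}$; this is why a factor $(1+2r)$ is harmless and gets absorbed into $C_{\theta,n}$.

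The combinatorial heart of the argument is the weighted summation over multi-indices. After the $L^\infty$/$H^\theta$ splitting, summing $r^{2|\alpha|}/\alpha_\ast!^{2s}$ times the square of the product over $|\alpha|\le l$ produces exactly the left-hand side of \eqref{equation:summation1} in Lemma~\ref{theorem:summation1}, once one accounts for the factorial weights: the factors $\alpha_\ast!^{-s}$ distribute over the $2p+1$ subindices $\alpha(1),\dots,\alpha(p),\beta(1),\dots,\beta(p)$ at the cost of the ratio $(\alpha_\ast^\dagger)^2/(\prod\alpha(j)_\ast^\dagger\prod\beta(k)_\ast^\dagger)$ via Lemma~\ref{theorem:factorial} applied with exponent $s\ge1$, and the extra $\alpha_\ast^\dagger$-type factor in the numerator is the price paid for the one derivative lost when passing from $J^\alpha\partial_j$ to $\partial_j J^\alpha$ plus lower-order commutator (each $\partial_j$ hitting a $J$-power produces a factor $\alpha_j \le \alpha_\ast^\dagger + 1$). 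Lemma~\ref{theorem:summation1} then bounds this by $a^{n(2p-1)}p^{2n}(p)^{2n}(X^l_{\theta,s,r})^{2p+1}$ where the clean index is the one left in $H^\theta$, i.e. we get $(X^{l-1}_{\theta,s,r})^{2p}X^l_{\theta,s,r}$ up to constants; the polynomial-in-$p$ factors $p^{4n}$ and the powers of $a$ and $C_{m,\theta}^{2p+1}$ from \eqref{equation:chain} are all absorbed into $C_{\theta,n}^{2p}$, and finally multiplying by the coefficient bound $A_p \ge \sum|f_{\alpha\beta}|$ and summing over $p$ gives the stated series $\sum_p A_p (C_{\theta,n}X^{l-1}_{\theta,s,r})^{2p}(C_{\theta,n}X^l_{\theta,s,r})$.

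I expect the main obstacle to be bookkeeping rather than any single hard estimate: one must verify that, after the Leibniz expansion, \emph{every} surviving term in $f_{\theta,\alpha}$ indeed has at most one factor carrying the full $l$ copies of $J$ and at most $\theta$ derivatives, so that all other factors are genuinely lower order and can be placed in $L^\infty$ with $X^{l-1}$-norms. In particular the terms where $\partial^{\alpha'}$ (with $\alpha'\ne 0$) hits the $\frac{\partial f}{\partial v_j}$ coefficient or the phase $e^{2i\psi}$ must be checked to produce only $\langle x\rangle$-growth controlled by the $\langle x\rangle^{-2m-|\alpha|}$-type weights and factorial growth $\alpha'!^s$ controlled by Lemma~\ref{theorem:decay}-style bounds; this is where the positive-constant $C_{\theta,n}$ must be chosen large enough to dominate, uniformly in $p$, the combinatorial constants coming from Lemma~\ref{theorem:leibniz} (the $C_{m,\theta}^k$), Lemma~\ref{theorem:summation1} (the $a^{n(p+q-2)}p^{2n}q^{2n}$), and the multinomial coefficients $\alpha!/(\alpha'!(\alpha-\alpha')!)$ appearing in the definition of $f_{\theta,\alpha}$ itself. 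Once the structural claim is in place, the rest is a mechanical assembly of the cited lemmas.
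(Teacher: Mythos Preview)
Your outline is correct and matches the paper's approach: the paper makes your informal splitting explicit by writing $f_{\theta,\alpha}=g_{\theta,\alpha}+h_{\theta,\alpha}$, where $g_{\theta,\alpha}$ is the Leibniz remainder from distributing $\langle D\rangle^\theta$ over the $(2p+1)$-fold product (bounded via \eqref{equation:leibniz} with all factors in $H^{\theta-1}$) and $h_{\theta,\alpha}$ collects the commutators $[J^{\alpha^q},\partial_j]$, after which both pieces are fed into Lemma~\ref{theorem:summation1} and Lemma~\ref{theorem:commutation} exactly as you describe. One clarification on your anticipated obstacle: once you use the conjugation $J^\alpha=(2it)^{|\alpha|}e^{i\psi}\partial^\alpha e^{-i\psi}$ together with the gauge invariance, the phases $e^{\pm i\psi}$ and $e^{2i\psi}$ cancel completely and the expansion is purely in terms of products $\prod J^{\alpha^q}\partial_{q}u\cdot\prod\overline{J^{\alpha^{q'}}\partial_{q'}u}$ (the $(-1)^{|\cdot|}$ coming from $\overline{J_k}=x_k-2it\partial_k$), so no $\langle x\rangle$-weights or Lemma~\ref{theorem:decay}-type bounds enter this argument at all.
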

\begin{proof}
For any multi-indices 
$\beta,\bar{\beta}\in(\mathbb{N}\cup\{0\})^{n+1}$ 
satisfying 
$\lvert\beta\rvert=p+1$ 
and 
$\lvert\bar{\beta}\rvert=p$, 
and for $q=0,1,\dotsc,2p$, set 
$$
\p_{q,\beta\bar{\beta}}
=
\begin{cases}
1 
& 
(q\leqslant\beta_0)
\\
1 
& 
(2p+1\leqslant{q}\leqslant2p+\beta_0)
\\
\p_j 
& 
(\beta_0+\dotsb+\beta_{j-1}\leqslant{q}\leqslant\beta_0+\dotsb+\beta_j-1)
\\
\p_j 
& 
(p+\bar{\beta}_0+\dotsb+\bar{\beta}_{j-1}+1\leqslant{q}\leqslant{p}+\bar{\beta}_0+\dotsb+\bar{\beta}_j) 
\end{cases}
$$
where 
$\beta=(\beta_0,\beta_1,\dotsc,\beta_n)$ 
and 
$\bar{\beta}=(\bar{\beta}_0,\bar{\beta}_1,\dotsc,\bar{\beta}_n)$. 
We split $f_{\theta,\alpha}$ into two parts: 
$f_{\theta,\alpha}=g_{\theta,\alpha}+h_{\theta,\alpha}$, 
\begin{align*}
  g_{\theta,\alpha}
& =
  \sum_{p=1}^\infty
  \sum_{\substack{{\lvert\beta\rvert=p+1} \\ {\lvert\bar{\beta}}\rvert=p}}
  f_{\beta\bar\beta}
  \sum_{\alpha^0+\dotsb+\alpha^{2p}=\alpha}
  \frac{\alpha!}{\alpha^0!\dotsb\alpha^{2p}!}
  (-1)^{\lvert\alpha^{p+1}+\dotsb+\alpha^{2p}\rvert}
\\
& \quad
  \times 
  \Biggl\{
  \langle{D}\rangle^\theta
  \left(
  \prod_{q=0}^pJ^{\alpha^q}\p_{q,\beta\bar{\beta}}u
  \prod_{q^\prime=p+1}^{2p}
  \overline{J^{\alpha^{q^\prime}}\p_{q^\prime,\beta\bar\beta}u}
  \right)
\\
& \qquad 
  -
  \sum_{q_1=\beta_0}^p
  \langle{D}\rangle^\theta  
  J^{\alpha^{q_1}}\p_{q_1,\beta\bar{\beta}}u
  \prod_{\substack{{q=0} \\ {q{\ne}q_1}}}^p
  J^{\alpha^q}\p_{q,\beta\bar{\beta}}u
  \prod_{q^\prime=p+1}^{2p}
  \overline{J^{\alpha^{q^\prime}}\p_{q^\prime,\beta\bar{\beta}}u}
\\
& \quad\qquad 
  -
  \prod_{q=0}^p
  J^{\alpha^q}\p_{q,\beta\bar{\beta}}u
  \sum_{q_1=p+1+\bar{\beta}_0}^{2p}
  \overline{\langle{D}\rangle^\theta 
            J^{\alpha^{q_1}}\p_{q_1,\beta\bar{\beta}}u} 
  \prod_{\substack{{q^\prime=p+1} \\ {q^\prime{\ne}q_1}}}^p
  \overline{J^{\alpha^{q^\prime}}\p_{q^\prime,\beta\bar{\beta}}u}  
  \Biggr\}, 
\\
  h_{\theta,\alpha}
& =
  \sum_{p=1}^\infty
  \sum_{\substack{{\lvert\beta\rvert=p+1} \\ {\lvert\bar{\beta}}\rvert=p}}
  f_{\beta\bar\beta}
  \sum_{\alpha^0+\dotsb+\alpha^{2p}=\alpha}
  \frac{\alpha!}{\alpha^0!\dotsb\alpha^{2p}!}
  (-1)^{\lvert\alpha^{p+1}+\dotsb+\alpha^{2p}\rvert}
\\
& \quad
  \times 
  \Biggl\{
  \sum_{q_1=\beta_0}^p
  \langle{D}\rangle^\theta  
  [J^{\alpha^{q_1}},\p_{q_1,\beta\bar{\beta}}]u
  \prod_{\substack{{q=0} \\ {q{\ne}q_1}}}^p
  J^{\alpha^q}\p_{q,\beta\bar{\beta}}u
  \prod_{q^\prime=p+1}^{2p}
  \overline{J^{\alpha^{q^\prime}}\p_{q^\prime,\beta\bar{\beta}}u}
\\
& \quad\qquad 
  +
  \prod_{q=0}^p
  J^{\alpha^q}\p_{q,\beta\bar{\beta}}u
  \sum_{q_1=p+1+\bar{\beta}_0}^{2p}
  \overline{\langle{D}\rangle^\theta 
            [J^{\alpha^{q_1}},\p_{q_1,\beta\bar{\beta}}]u} 
  \prod_{\substack{{q^\prime=p+1} \\ {q^\prime{\ne}q_1}}}^p
  \overline{J^{\alpha^{q^\prime}}\p_{q^\prime,\beta\bar{\beta}}u}  
  \Biggr\}.
\end{align*}
Using Theorem~\ref{theorem:kato-ponce} and Lemma~\ref{theorem:leibniz}, we deduce 
\begin{align*}
& \Biggl\lVert
  \langle{D}\rangle^\theta
  \left(
  \prod_{q=0}^pJ^{\alpha^q}\p_{q,\beta\bar{\beta}}u
  \prod_{q^\prime=p+1}^{2p}
  \overline{J^{\alpha^{q^\prime}}\p_{q^\prime,\beta\bar\beta}u}
  \right)
\\
& \quad 
  -
  \sum_{q_1=\beta_0}^p
  \langle{D}\rangle^\theta  
  J^{\alpha^{q_1}}\p_{q_1,\beta\bar{\beta}}u
  \prod_{\substack{{q=0} \\ {q{\ne}q_1}}}^p
  J^{\alpha^q}\p_{q,\beta\bar{\beta}}u
  \prod_{q^\prime=p+1}^{2p}
  \overline{J^{\alpha^{q^\prime}}\p_{q^\prime,\beta\bar{\beta}}u}
\\
& \qquad 
  -
  \prod_{q=0}^p
  J^{\alpha^q}\p_{q,\beta\bar{\beta}}u
  \sum_{q_1=p+1+\bar{\beta}_0}^{2p}
  \overline{\langle{D}\rangle^\theta 
            J^{\alpha^{q_1}}\p_{q_1,\beta\bar{\beta}}u} 
  \prod_{\substack{{q^\prime=p+1} \\ {q^\prime{\ne}q_1}}}^p
  \overline{J^{\alpha^{q^\prime}}\p_{q^\prime,\beta\bar{\beta}}u}  
  \Biggr\rVert 
\\
& \leqslant
  C_\theta^{2p+1-(\beta_0+\bar{\beta}_0)}
  (2p+1-(\beta_0+\bar{\beta}_0))
  \prod_{q=0}^{2p}
  \lVert
  J^{\alpha^q}\p_{q,\beta\bar{\beta}}u  
  \rVert_{\theta-1}
\\
& \leqslant
  C_\theta^{2p+1}
  (2p+1)
  \prod_{q=0}^{2p}
  \lVert
  J^{\alpha^q}\p_{q,\beta\bar{\beta}}u  
  \rVert_{\theta-1}
\end{align*}
This estimate and the Minkowski inequality show that 
\begin{align*}
& \sum_{\lvert\alpha\rvert\leqslant{l}}
\frac{r^{2\lvert\alpha}\rvert}{\alpha_\ast!^{2s}}
\lVert{g_{\theta,\alpha}}\rVert^2
\\
& \leqslant
  \sum_{\lvert\alpha\rvert\leqslant{l}}
  \frac{r^{2\lvert\alpha\rvert}}{\alpha_\ast!^{2s}}
  \Biggl\{
  \sum_{p=1}^\infty
  A_p(2p+1)C_\theta^{2p+1}
  \sum_{\alpha^0+\alpha^{2p}=\alpha}
  \frac{\alpha!}{\alpha^0!\dotsb\alpha^{2p}!}
  \prod_{q=0}^{2p}
  \lVert{J^{\alpha^q}\p_{q,\beta\bar{\beta}}u}\rVert_{\theta-1}
  \Biggr\}^2
\\
& \leqslant
  \biggl[
  \sum_{p=1}^\infty
  A_p(2p+1)C_\theta^{2p+1}
  \biggl\{
  \sum_{\lvert\alpha\rvert\leqslant{l}}
\\
& \times
  \biggl(
  \sum_{\alpha^0+\alpha^{2p}=\alpha}
  \frac{\alpha!}{\alpha^0!\dotsb\alpha^{2p}!}
  \frac{r^{2\lvert\alpha\rvert}}{\alpha_\ast!^{2s}}
  \prod_{q=0}^{2p}
  \lVert{J^{\alpha^q}\p_{q,\beta\bar{\beta}}u}\rVert_{\theta-1}
  \biggr)^2
  \biggr\}^{1/2}
  \biggr]^2
\\
& \leqslant 
  \biggl[
  \sum_{p=1}^\infty
  A_p(2p+1)C_\theta^{2p+1}
  \biggl\{
  \sum_{\lvert\alpha\rvert\leqslant{l}}
\\
& \times
  \biggl(
  \sum_{\alpha^0+\alpha^{2p}=\alpha}
  \frac{\alpha!}{\alpha^0!\dotsb\alpha^{2p}!}
  \left(
  \frac{\alpha_\ast^0!\dotsb\alpha_\ast^{2p}!}{\alpha_\ast!}
  \right)^{s-1}
  \prod_{q=0}^{2p}
  \frac{r^{2\lvert\alpha\rvert}}{\alpha_\ast!^{2s}}
  \lVert{J^{\alpha^q}\p_{q,\beta\bar{\beta}}u}\rVert_{\theta-1}
  \biggr)^2
  \biggr\}^{1/2}
  \biggr]^2
\\
& \leqslant 
  \biggl[
  \sum_{p=1}^\infty
  A_p(2p+1)C_\theta^{2p+1}
  \biggl\{
  \sum_{\lvert\alpha\rvert\leqslant{l}}
  \biggl(
  \sum_{\alpha^0+\alpha^{2p}=\alpha}
  \frac{(\alpha)}{(\alpha^0)\dotsb(\alpha^{2p})}
  \prod_{q=0}^{2p}
  A(\alpha^q)
  \biggr)^2
  \biggr\}^{1/2}
  \biggr]^2,
\end{align*}
where 
$$
A(\alpha)
=
\frac{r^{2\lvert\alpha}\rvert}{\alpha_\ast!^{2s}}
\max
\Bigl\{
\lVert{J^\alpha}u\rVert_{\theta-1}, 
\lVert{J^\alpha}\p_1u\rVert_{\theta-1},
\dotsc, 
\lVert{J^\alpha}\p_nu\rVert_{\theta-1}
\Bigr\}.
$$
Using Lemma~\ref{theorem:summation1} and \eqref{equation:commutation}, 
we deduce 
\begin{align}
& \sum_{\lvert\alpha\rvert\leqslant{l}}
  \frac{r^{2\lvert\alpha}\rvert}{\alpha_\ast!^{2s}}
  \lVert{g_{\theta,\alpha}}\rVert^2
\nonumber
\\
& \leqslant
  \biggl[
  \sum_{p=1}^\infty
  A_p(2p+1)C_\theta^{2p+1}(a^n)^{2p}
  \left(\sum_{\lvert\alpha\rvert\leqslant{l-1}}A(\alpha)^2\right)^p
  \left(\sum_{\lvert\alpha\rvert\leqslant{l}}A(\alpha)^2\right)^{1/2}
  \biggr]^2
\nonumber
\\
& \leqslant
  \biggl[
  \sum_{p=1}^\infty
  A_p(2p+1)C_\theta^{2p+1}(a^n)^{2p}C_r^{2p+1}
  (X_{\theta,s,r}^{l-1})^{2p}
  X_{\theta,s,r}^l
  \biggr]^2. 
\label{equation:estofg}
\end{align}
In the same way, we can get 
\begin{equation}
\sum_{\lvert\alpha\rvert\leqslant{l}}
\frac{r^{2\lvert\alpha}\rvert}{\alpha_\ast!^{2s}}
\lVert{h_{\theta,\alpha}}\rVert^2
\leqslant
\biggl[
\sum_{p=1}^\infty
A_pC_{\theta,n}^{2p+1}
(X_{\theta,s,r}^{l-1})^{2p}
X_{\theta,s,r}^l
\biggr]^2. 
\label{equation:estofh} 
\end{equation}
Combining \eqref{equation:estofg} and \eqref{equation:estofh}, 
we obtain Lemma~\ref{theorem:lower}. 
\end{proof}
To use the linear estimates obtained in Section~\ref{section:linear}, 
we need the estimates of coefficient matrices of the system for 
${}^t\left[
r^{\lvert\alpha\rvert}\langle{D}\rangle^\theta J^\alpha u/\alpha_\ast!^s,
r^{\lvert\alpha\rvert}\langle{D}\rangle^\theta \overline{J^\alpha u}/\alpha_\ast!^s
\right]_{\lvert\alpha\rvert\leqslant{l}}$. 
For this purpose, we here define some matrices appearing in the system as follows. 
For $j=1,\dotsc,n$ and $l\in\mathbb{N}$, we set 
$$
B_j^l
=
\begin{bmatrix}
C_{j,1}^l & C_{j,2}^l
\\
\overline{C_{j,2}^l} & \overline{C_{j,1}^l}
\end{bmatrix} 
\quad
C_{j,1}^l
=
\Bigl[b_{j,1,\alpha\beta}^l\Bigr]_{\lvert\alpha\rvert,\lvert\beta\rvert\leqslant{l}},
\quad
C_{j,2}^l
=
\Bigl[b_{j,2,\alpha\beta}^l\Bigr]_{\lvert\alpha\rvert,\lvert\beta\rvert\leqslant{l}},
$$
$$
b_{j,1,\alpha\beta}^l
=
\begin{cases}
\dfrac{\beta_\ast!^sr^{\lvert\alpha-\beta\rvert}}{\alpha_\ast!^s}
\displaystyle\sum_{p=1}^\infty
\displaystyle\sum_{\vert\gamma\rvert-1=\lvert\bar{\gamma}\rvert=p}
f_{\gamma\bar{\gamma}}\gamma_j
&
\\
\quad\times
\displaystyle\sum_{j,1}
\frac{\alpha!}{\alpha^0!\dotsb\alpha^{2p}!\beta!}
(-1)^{\lvert\alpha^{p+1}+\dotsb+\alpha^{2p}\rvert}
& 
\\
\qquad\times
\displaystyle\prod_{\substack{q=0 \\ q\ne\gamma_1+\dotsb+\gamma_{j-1}}}^p
J^{\alpha^q}\p_{q,\gamma\bar{\gamma}}u
\displaystyle\prod_{q^\prime=p+1}^{2p}
\overline{J^{\alpha^{q^\prime}}\p_{q^\prime,\gamma\bar{\gamma}}u}
& 
\ \text{if}\quad \beta\leqslant\alpha,
\\
0 
&
\ \text{otherwise},
\end{cases}
$$
$$
\sum_{j,1}
=
\sum_{\alpha^0+\dotsb+\alpha^{m-1}+\alpha^{m+1}\dotsb+\alpha^{2p}=\alpha-\beta},
\quad
m=\gamma_0+\dotsb+\gamma_{j-1},
$$
$$
b_{j,2,\alpha\beta}^l
=
\begin{cases}
\dfrac{\beta_\ast!^sr^{\lvert\alpha-\beta\rvert}}{\alpha_\ast!^s}
\displaystyle\sum_{p=1}^\infty
\displaystyle\sum_{\vert\gamma\rvert-1=\lvert\bar{\gamma}\rvert=p}
f_{\gamma\bar{\gamma}}\bar{\gamma}_j
&
\\
\quad\times
\displaystyle\sum_{j,2}
\frac{\alpha!}{\alpha^0!\dotsb\alpha^{2p}!\beta!}
(-1)^{\lvert\alpha^{p+1}+\dotsb+\alpha^{2p}\rvert}
& 
\\
\qquad\times
\displaystyle\prod_{q=0}^p
J^{\alpha^q}\p_{q,\gamma\bar{\gamma}}u
\displaystyle\prod_{\substack{q^\prime=p+1 \\ q^\prime\ne p+\bar{\gamma}_0+\dotsb+\bar{\gamma}_{j-1}+1}}^{2p}
\overline{J^{\alpha^{q^\prime}}\p_{q^\prime,\gamma\bar{\gamma}}u}
& 
\ \text{if}\quad \beta\leqslant\alpha,
\\
0 
&
\ \text{otherwise},
\end{cases}
$$
$$
\sum_{j,2}
=
\sum_{\alpha^0+\dotsb+\alpha^{m-1}+\alpha^{m+1}\dotsb+\alpha^{2p}=\alpha-\beta},
\quad
m=p+\bar{\gamma}_0+\dotsb+\bar{\gamma}_{j-1}.
$$
We need the estimates of the above matrices later. 
\begin{lemma}
\label{theorem:coefficients}
Let $\theta>n/2+3$. 
Then, there exists a positive constant $C_{\theta,n}$ which is 
independent of $l\in\mathbb{N}$, such that 
for $j=1,\dotsc,n$ and 
for any $u{\in}C^1([0,T];\mathscr{S})$ solving \eqref{equation:pde1}, 
\begin{equation}
\lVert{B_j^l}(t)\rVert_{\mathscr{B}^2}
\leqslant
C_{\theta,n}
\sum_{p=1}^\infty
A_p
(C_{\theta,n}X_{\theta,s,r}^{l-1}(t))^{2p-1} 
(C_{\theta,n}X_{\theta,s,r}^l(t)),
\label{equation:flat2}
\end{equation}
\begin{align}
  \lVert\p_tB_l^l(t)\rVert_{\mathscr{B}^0}
& \leqslant
  C_{\theta,n}
  \sum_{p=1}^\infty
  A_p
  (C_{\theta,n}X_{\theta,s,r}^{l-1}(t))^{2p-1}
  (C_{\theta,n}X_{\theta,s,r}^l(t))
\nonumber
\\
& \times
  \left(
  1
  +
  \sum_{q=1}^\infty(C_{\theta,n}X_{\theta,s,r}^{l-1}(t))^{2q}
  \right),
\label{equation:flatt}
\end{align}
\end{lemma}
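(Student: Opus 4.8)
The plan is to reduce both bounds to the computation that already underlies Lemma~\ref{theorem:lower}, the only genuinely new feature being the bookkeeping of the single factor that has been extracted as a coefficient of a first order derivative. First I would observe that, by the very way the matrices $B_j^l$ are constructed, $\sum_{j=1}^nB_j^l(t,x)\p_j$ applied to $w_l$ reproduces precisely the product terms that were subtracted off in the definition of $f_{\theta,\alpha}$ in Lemma~\ref{theorem:lower} (the Gevrey weight $\beta_\ast!^sr^{\lvert\alpha-\beta\rvert}\alpha_\ast!^{-s}$ in each entry being chosen exactly so that, on the component indexed by $\alpha$, the weight $r^{\lvert\alpha\rvert}\alpha_\ast!^{-s}$ is recovered). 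Consequently each entry $b_{j,m,\alpha\beta}^l(t,x)$ equals, apart from the constant $f_{\gamma\bar\gamma}$, the scalar $\gamma_j$ or $\bar\gamma_j$, the multinomial coefficient $\alpha!/(\alpha^0!\dotsb\alpha^{2p}!\beta!)$ and the above weight, a product of exactly $2p$ functions of the form $J^{\alpha^q}\p_{q,\gamma\bar\gamma}u$ or its complex conjugate, with $\alpha^0+\dotsb+\alpha^{2p}=\alpha-\beta$ (one index omitted). Since $\theta>n/2+3$ furnishes the Sobolev embeddings $H^{\theta-1}\hookrightarrow\mathscr{B}^2$ and $H^{\theta-3}\hookrightarrow\mathscr{B}^0$, the Leibniz rule together with $H^{\theta-1}$-continuity of pointwise multiplication bounds the $\mathscr{B}^2$-norm of such a product by $C_{\theta,n}^{2p}\prod_q\lVert J^{\alpha^q}\p_{q,\gamma\bar\gamma}u\rVert_{\theta-1}$, and each factor here is controlled, through \eqref{equation:commutation}, by the quantity $A(\alpha^q)$ already used in the proof of Lemma~\ref{theorem:lower}.

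Because the $B_j^l$ are lower triangular in the multi-index order, I would then apply Lemma~\ref{theorem:summation2} with the $\mathscr{B}^2$-norms $\lVert b_{j,m,\beta,\beta-\alpha}^l\rVert_{\mathscr{B}^2}$ in place of $\lvert b_{\beta,\beta-\alpha}\rvert$, reducing $\lVert B_j^l(t)\rVert_{\mathscr{B}^2}$ to a maximum over $\sigma\in S_n$ and $\nu$ of the square root of a weighted double sum of the squared entries. To estimate that sum I would follow the passage leading to \eqref{equation:estofg}: using $s\geqslant1$, the monotonicity of factorials and Lemma~\ref{theorem:factorial}, the weight $(\beta_\ast!^sr^{\lvert\alpha-\beta\rvert}\alpha_\ast!^{-s})^2$ and the multinomial factor are absorbed into $\prod_q r^{2\lvert\alpha^q\rvert}\alpha^q_\ast!^{-2s}$ up to a bounded combinatorial constant and a factor $(\alpha)^2/\bigl((\beta)^2\prod_q(\alpha^q)^2\bigr)$, after which Lemma~\ref{theorem:summation1} (applied, exactly as in the proof of Lemma~\ref{theorem:lower}, to the sums over the decompositions $\alpha-\beta=\sum_q\alpha^q$ and over $\lvert\alpha\rvert\leqslant l$) performs the summation. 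The counting point is that among the $2p$ free multi-indices with total order $\lvert\alpha-\beta\rvert\leqslant l$ at most one can have order $l$ — two would force $\lvert\alpha\rvert\geqslant 2l$ — so $2p-1$ of the factors are bounded by $X_{\theta,s,r}^{l-1}(t)$ and one by $X_{\theta,s,r}^l(t)$; summing the resulting series in $p$ against $A_p$ and the constants $C_{\theta,n}^{2p}$ (which swallow the $a^n$ coming from Lemma~\ref{theorem:summation1}) gives \eqref{equation:flat2}.

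For \eqref{equation:flatt} I would differentiate each entry in $t$ by the Leibniz rule, so that every summand contains exactly one factor $\p_t(J^{\alpha^q}\p_{q,\gamma\bar\gamma}u)$. Using $[\,\p_t,J_k\,]=2i\p_k$ and the equation \eqref{equation:pde1} in the form $\p_tu=i\Delta u+f(u,\p u)$, this factor becomes a combination of terms $J^\beta\p^\mu u$ with $\lvert\beta\rvert\leqslant\lvert\alpha^q\rvert$ and $\lvert\mu\rvert\leqslant3$, and of terms $J^{\alpha^q}\p^\mu f(u,\p u)$ with $\lvert\mu\rvert\leqslant1$. Since only the $\mathscr{B}^0$-norm is now needed, the two spare derivatives afforded by $\theta>n/2+3$ absorb the Laplacian, while the nonlinear part is expanded through \eqref{equation:nonlinearity} into products of $2q+1$ further factors of the same type, each again dominated by $X_{\theta,s,r}^{l-1}(t)$. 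Feeding this back into Lemma~\ref{theorem:summation2}, Lemma~\ref{theorem:factorial} and Lemma~\ref{theorem:summation1} exactly as above, the $i\Delta u$ and commutator terms reproduce the bound \eqref{equation:flat2}, whereas the nonlinear part of $\p_tu$ supplies the extra series $\sum_{q=1}^\infty(C_{\theta,n}X_{\theta,s,r}^{l-1}(t))^{2q}$, and combining the two yields \eqref{equation:flatt}.

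The step I expect to be the real obstacle is the combinatorial bookkeeping in the last two paragraphs: one must check that after inserting the weight $\beta_\ast!^sr^{\lvert\alpha-\beta\rvert}\alpha_\ast!^{-s}$ and the multinomial factor into the squared entries, the double sum produced by Lemma~\ref{theorem:summation2} — with its ranges $\beta_{\sigma(i)}=\alpha_{\sigma(i)},\dotsc,2\alpha_{\sigma(i)}-1$ and $\beta_{\sigma(i)}=2\alpha_{\sigma(i)},\dotsc$ — is still exactly of the shape to which Lemma~\ref{theorem:summation1} applies, once one substitutes $\alpha\mapsto\alpha-\beta$ in the inner decomposition, and that the balance of levels comes out as $2p-1$ against $1$ rather than, say, $2p$ against $0$ or $2p-2$ against $2$. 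This is the same accounting performed for the linear coefficients in \cite{chihara}, so I would organise the proof so as to reuse that argument essentially verbatim and isolate only the two new ingredients: the factor omitted in forming $B_j^l$, and, for \eqref{equation:flatt}, the use of the equation to turn $\p_t$ into spatial derivatives plus nonlinear terms.
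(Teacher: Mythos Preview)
Your proposal is correct and matches the paper's argument step for step: entrywise $H^{\theta-1}$ bounds via \eqref{equation:chain} and the embedding $H^{\theta-1}\hookrightarrow\mathscr{B}^2$, Lemma~\ref{theorem:summation2} for the matrix norm, the factorial/weight rearrangement using $s\geqslant1$ and Lemma~\ref{theorem:factorial}, and finally Lemma~\ref{theorem:summation1}---the combinatorial reduction you single out as the obstacle (controlling the ratio $(\alpha)/\bigl((\beta)((\alpha-\beta))\bigr)$ on the index ranges produced by Lemma~\ref{theorem:summation2} so that the double sum collapses to a single sum over $\lvert\beta\rvert\leqslant l$) is precisely what the paper works out explicitly. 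For \eqref{equation:flatt} the paper takes a slightly cleaner route than your commutator expansion: since $[\p_t-i\Delta,J_k]=0$, one has $\p_t(J^{\alpha^q}\p_ju)=i\Delta J^{\alpha^q}\p_ju+J^{\alpha^q}\p_jf(u,\p u)$ with the Laplacian already on the outside, so the commutator $[\p_t,J^{\alpha^q}]$ never needs to be unfolded and the two extra derivatives are absorbed immediately by $H^{\theta-3}\hookrightarrow\mathscr{B}^0$.
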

\begin{proof}
Simple computation shows that 
\begin{align}
  \lVert{B_j^l(t)}\rVert_{\mathscr{B}^2}
& \leqslant
  2\sum_{k=1,2}  
  \lVert{C_{j,k}^l(t)}\rVert_{\mathscr{B}^2}
\nonumber
\\
& \leqslant
  2\sum_{k=1,2}
  \left\lvert
  \Bigl[
  \lVert
  b_{j,k,\alpha\beta}^l(t)
  \rVert_{\mathscr{B}^2}
  \Bigr]_{\lvert\alpha\rvert, \lvert\beta\rvert \leqslant l}
  \right\rvert
\nonumber
\\
& =
  \sum_{k=1,2}I_k(t). 
\label{eqution:sobolev}
\end{align}
We show that $I_1(t)$ bounded by the right hand side of \eqref{equation:flat2}. 
For the sake of convenience, set 
$$
A(\alpha)
=
\begin{cases}
\max
\Bigl\{
\lVert{J^\alpha{u}}\rVert, 
\lVert{J^\alpha}\p_1u\rVert,
\dotsc,
\lVert{J^\alpha}\p_nu\rVert
\Bigr\} 
& 
\ \text{if}\quad \lvert\alpha\rvert\leqslant{l-1},
\\
0 
& 
\ \text{if}\quad \lvert\alpha\rvert=l.
\end{cases}
$$
Using \eqref{equation:chain}, we have for any $\beta\leqslant\alpha$ 
\begin{align*}
  \lVert{b_{j,1,\alpha(\alpha-\beta)}}\rVert_{\theta-1}
& \leqslant
  \frac{(\alpha-\beta)_\ast!^sr^{\lvert\beta\rvert}}{\alpha_\ast!^s}
  C_{\theta,n}
  \sum_{p=1}^\infty
  A_pp^2C_{\theta,n}^{2p}
\\
& \quad \times
  \sum_{\alpha^1+\dotsb+\alpha^{2p}=\beta}
  \frac{\alpha!}{\alpha^1!\dotsb\alpha^{2p}!}
  \prod_{q=1}^{2p}A(\alpha^q)
\\
& \leqslant
  C_{\theta,n}
  \sum_{p=1}^\infty
  A_pp^2C_{\theta,n}^{2p}
\\
& \quad \times
  \sum_{\alpha^1+\dotsb+\alpha^{2p}=\beta}
  \frac{(\alpha-\beta)_\ast!^{s-1}\alpha_\ast^1!^{s-1}\dotsb\alpha_\ast^{2p}!^{s-1}}{\alpha_\ast!^{s-1}}
\\
& \qquad \times
  \frac{(\alpha)}{(\alpha^1)\dotsb(\alpha^{2p})((\alpha-\beta))}
  \prod_{q=1}^{2p}
  A(\alpha^q)
  \frac{r^{\lvert\alpha^q\rvert}}{\alpha_\ast^q!^s} 
\\
& \leqslant
  C_{\theta,n}
  \frac{(\alpha)}{(\beta)((\alpha-\beta))}
  \sum_{p=1}^\infty
  A_pp^2C_{\theta,n}^{2p}
  \frac{(\beta)}{(\alpha^1)\dotsb(\alpha^{2p})}
  \prod_{q=1}^{2p}
  A(\alpha^q)
  \frac{r^{\lvert\alpha^q\rvert}}{\alpha_\ast^q!^s}. 
\end{align*}
Using \eqref{equation:summation2} and the above estimates, we deduce 
\begin{align}
  I_1(t) 
& \leqslant
  2^nn!
  \max_{\substack{\sigma{\in}S_n
                  \\
                  \nu=0,1,\dotsc,n-1
                  \\
                  \sigma(1)<\dotsb<\sigma(\nu)
                  \\
                  \sigma(\nu+1)<\dotsb<\sigma(n)}}
  \sum_{\beta_{\sigma(\nu+1)}+\dotsb+\beta_{\sigma(n)}\leqslant l}
\nonumber
\\
& \times
  \Biggl\{
  \sum_{\substack{\beta_{\sigma(1)}
                  \\
                  \vdots
                  \\
                  \beta_{\sigma(\nu)}}}
  \sum_{\substack{\alpha_{\sigma(1)}<2\beta_{\sigma(1)}
                  \\
                  \vdots
                  \\
                  \alpha_{\sigma(\nu)}<2\beta_{\sigma(\nu)}}}
  \max_{\substack{\alpha_{\sigma(\nu+1)}\geqslant2\beta_{\sigma(\nu1)}
                  \\
                  \vdots
                  \\
                  \alpha_{\sigma(n)}\geqslant2\beta_{\sigma(n)}}} 
  \frac{(\alpha)^2}{(\beta)^2((\alpha-\beta))^2}
\nonumber
\\
& \times
  \left(
  \sum_{p=1}^\infty
  A_pp^2C_{\theta,n}^{2p}
  \sum_{\alpha^1+\dotsb+\alpha^{2p}=\beta}
  \frac{(\beta)}{(\alpha^1)\dotsm(\alpha^{2p})}
    \prod_{q=1}^{2p}
  A(\alpha^q)
  \frac{r^{\lvert\alpha^q\rvert}}{\alpha_\ast^q!^s}
  \right)^2
  \Biggr\}^{1/2}.
\label{equation:step51}
\end{align}
We here remark that if 
$$
\alpha_{\sigma(1)}<2\beta_{\sigma(1)},
\dotsc,
\alpha_{\sigma(\nu)}<2\beta_{\sigma(\nu)},
\quad
\alpha_{\sigma(\nu+1)}\geqslant2\beta_{\sigma(\nu+1)},
\dotsc,
\alpha_{\sigma(n)}\geqslant2\beta_{\sigma(n)},
$$
then
\begin{align*}
  \frac{(\alpha)}{(\beta)((\alpha-\beta))}
& =
  \frac{(\alpha_{\sigma(1)})}{(\beta_{\sigma(1)})}
  \dotsm
  \frac{(\alpha_{\sigma(\nu)})}{(\beta_{\sigma(\nu)})}
  \frac{(\alpha_{\sigma(\nu+1)})}{((\alpha-\beta)_{\sigma(\nu+1)})}
  \dotsm
  \frac{(\alpha_{\sigma(n)})}{((\alpha-\beta)_{\sigma(n)})}
\\
& \quad\times
  \frac{1}{((\alpha-\beta)_{\sigma(1)})\dotsm((\alpha-\beta)_{\sigma(\nu)})}
  \frac{1}{(\beta_{\sigma(\nu+1)})\dotsm(\beta_{\sigma(n)})}
\\
& \leqslant
  \frac{2^n}{((\alpha-\beta)_{\sigma(1)})\dotsm((\alpha-\beta)_{\sigma(\nu)})
             (\beta_{\sigma(\nu+1)})\dotsm(\beta_{\sigma(n)})}.
\end{align*}
Substituting this into \eqref{equation:step51} and 
using the Schwarz inequality to the summation on 
$\beta_{\sigma(\nu+1)},\dotsc,\beta_{\sigma(n)}$, 
we deduce 
\begin{align*}
  I_1(t) 
& \leqslant
  2^{2n}n!^2
  \max_{\substack{\sigma{\in}S_n
                  \\
                  \nu=0,1,\dotsc,n-1
                  \\
                  \sigma(1)<\dotsb<\sigma(\nu)
                  \\
                  \sigma(\nu+1)<\dotsb<\sigma(n)}}
  2^\nu
  \sum_{\beta_{\sigma(\nu+1)}+\dotsb+\beta_{\sigma(n)}\leqslant l}
  \frac{1}{(\beta_{\sigma(\nu+1)})\dotsm(\beta_{\sigma(n)})}
\\
& \times
  \Biggl\{
  \sum_{\substack{\beta_{\sigma(1)}
                  \\
                  \vdots
                  \\
                  \beta_{\sigma(\nu)}}}
   \sum_{\substack{\alpha_{\sigma(1)}<2\beta_{\sigma(1)}
                  \\
                  \vdots
                  \\
                  \alpha_{\sigma(\nu)}<2\beta_{\sigma(\nu)}}}
\\
& \times
  \biggl(
  \sum_{p=1}^\infty
  A_pp^2C_{\theta,n}^{2p}
  \sum_{\alpha^1+\dotsb+\alpha^{2p}=\beta}
  \frac{(\beta)}{(\alpha^1)\dotsm(\alpha^{2p})}
  \prod_{q=1}^{2p}
  A(\alpha^q)
  \frac{r^{\lvert\alpha^q\rvert}}{\alpha_\ast^q!^s}
  \biggr)^2
  \Biggr\}^{1/2}.
\\
& \leqslant
  2^{2n}n!^2a^n
  \Biggl\{
  \sum_{\lvert\beta\rvert\leqslant{l}}
\\
& \times
  \biggl(
  \sum_{p=1}^\infty
  A_pp^2C_{\theta,n}^{2p}
  \sum_{\alpha^1+\dotsb+\alpha^{2p}=\beta}
  \frac{(\beta)}{(\alpha^1)\dotsm(\alpha^{2p})}
  \prod_{q=1}^{2p}
  A(\alpha^q)
  \frac{r^{\lvert\alpha^q\rvert}}{\alpha_\ast^q!^s}
  \biggr)^2
  \Biggr\}^{1/2}.
\end{align*}
The Minkowski inequality shows that 
\begin{align*}
  I_1(t)
& \leqslant
  2^{2n}n!^2a^n
  \sum_{p=1}^\infty
  A_pp^2C_{\theta,n}^{2p}
\\
& \times
  \left\{
  \sum_{\lvert\beta\rvert\leqslant{l}}
  \left(
  \sum_{\alpha^1+\dotsb+\alpha^{2p}=\beta}
  \frac{(\beta)}{(\alpha^1)\dotsm(\alpha^{2p})}
  \prod_{q=1}^{2p}
  A(\alpha^q)
  \frac{r^{\lvert\alpha^q\rvert}}{\alpha_\ast^q!^s}
  \right)^2
  \right\}^{1/2}.
\end{align*}
Applying \eqref{equation:summation1} to this, we have 
\begin{align}
  I_1(t)
& \leqslant
  2^{2n}n!^2a^n
  \sum_{p=1}^\infty
  A_pp^{2n+2}C_{\theta,n}^{2p}(a^n)^{2p}
\nonumber
\\
& \times
  \left(
  \sum_{\lvert\alpha\rvert\leqslant{l-1}}
  A(\alpha)^2
  \frac{r^{\lvert\alpha^q\rvert}}{\alpha_\ast^q!^s}
  \right)^{(2p-1)/2}
  \left(
  \sum_{\lvert\alpha\rvert\leqslant{l}}
  A(\alpha)^2
  \frac{r^{\lvert\alpha^q\rvert}}{\alpha_\ast^q!^s}
  \right)^{1/2}.
\label{equation:step52} 
\end{align}
Since $A(\alpha)=0$ for $\lvert\alpha\rvert=l$, 
and $p^{2n+2}\leqslant e^p(2n+2)!$, 
\eqref{equation:step52} is bounded by 
$$
I_1(t)
\leqslant
e^22^{2n}n!^2a^n(2n+2)!
\sum_{p=1}^n
A_p
\left\{
eC_{\theta,n}a^n
\left(
\sum_{\lvert\alpha\rvert\leqslant{l-1}}
A(\alpha)^2
\frac{r^{\lvert\alpha^q\rvert}}{\alpha_\ast^q!^s}
\right)^{1/2}
\right\}^{2p}. 
$$
Using \eqref{equation:commutation} for $r\leqslant1$, we have 
$$
I_1(t)
\leqslant
e^22^{2n}n!^2a^n(2n+2)!
\sum_{p=1}^n
A_p
\Bigl(eC_{\theta,n}a^nX_{\theta,s,r}^{l-1}(t)\Bigr)^{2p}. 
$$
The estimates of $I_2(t)$, 
\eqref{equation:flat2} and \eqref{equation:flatt} 
can be obtained similarly. 
If $u$ solves \eqref{equation:pde1}, then 
$$
\p_tJ^\alpha\p_ju
=
i\Delta{J^\alpha}\p_ju
+
J^\alpha\p_jf(u,\p{u}).
$$
Applying this formula to the time-derivatives of the matrices, 
we can show \eqref{equation:flatt} 
in the same way as \eqref{equation:flat2}. 
We here omit the detail. 
\end{proof}
%
%
\section{Uniform energy estimates}
\label{section:energy}
In this section we show that 
$\{X_{\theta,s,r}^l(t)\}_{l=0,1,2,\dotsc}$ 
is bounded in $C[-T,T]$. 
If this is true, then there exists a constant $C_0>0$ such that 
\begin{equation}
X_{\theta,s,r}^\infty(t)
=
\left(
\sum_{\alpha}
\frac{r^{2\lvert\alpha\rvert}}{\alpha_\ast!^s}
\lVert{J^\alpha{u(t)}}\rVert_\theta^2
\right)^{1/2}
\leqslant
C_0 
\label{equation:convergenceatt}
\end{equation}
for $t\in[-T,T]$. 
Let $u{\in}C([-T,T];H^\theta)$ be a solution to 
\eqref{equation:pde1}-\eqref{equation:data1} 
with $e^{\ep\langle{x}\rangle^{1/s}}u_0{\in}H^\theta$. 
Theorem~\ref{theorem:previous} shows that 
$X_{\theta,s,r}^l(t)$ is well-defined for any $l=0,1,2,\dotsc$. 
Lemma~\ref{theorem:decay} implies that 
there exist positive constants $M$ and $r$ such that 
\begin{equation}
X_{\theta,s,r}^\infty(0)
=
\left(
\sum_{\alpha}
\frac{r^{2\lvert\alpha\rvert}}{\alpha_\ast!^s}
\lVert{x^\alpha{u_0}}\rVert_\theta^2
\right)^{1/2}
\leqslant
M. 
\label{equation:convergenceat0}
\end{equation}
Without loss of generality, we may assume $r\leqslant1$. 
Since the finite sum $X_{\theta,s,r}^l(t)$ is well-defined, 
it suffices to prove \eqref{equation:convergenceatt} for small $T>0$. 
In order to make use of the energy estimates in Section~\ref{section:linear}, 
we here define functions and pseudodifferential operators:
$$
w^l
=
{}^t
\left[
\frac{r^{\lvert\alpha\rvert}}{\alpha_\ast!^s}
\langle{D}\rangle^\theta{J^\alpha}u,
\frac{r^{\lvert\alpha\rvert}}{\alpha_\ast!^s}
\overline{\langle{D}\rangle^\theta{J^\alpha}u}
\right]_{\lvert\alpha\rvert\leqslant{l}}, 
\quad
g^l
=
{}^t
\left[
\frac{r^{\lvert\alpha\rvert}}{\alpha_\ast!^s}
f_{\theta,\alpha},
\frac{r^{\lvert\alpha\rvert}}{\alpha_\ast!^s}
\overline{f_{\theta,\alpha}}
\right]_{\lvert\alpha\rvert\leqslant{l}},  
$$
$$
k_1^l(t,x,\xi)
=
\exp
\left(
A
\sum_{j=1}^n
\xi_j(\nu^2+\lvert\xi\rvert^2)^{-1/2}
\int_{-\infty}^{x_j}
\phi^l(t,s)ds
\right),
$$
\begin{align*}
  \phi^l(t,s)
& =
  \sum_{j=1}^n
  \sum_{\lvert\alpha\rvert\leqslant{l}}
  \frac{r^{\lvert\alpha\rvert}}{\alpha_\ast!^s}
  \int\dotsm\int_{\mathbb{R}^{n-1}}
\\
& \quad\times
  \lvert
  \langle{D}\rangle^\delta
  J^\alpha
  u(t,x_1,\dotsc,x_{j-1},s,x_{j+1},\dotsc,x_n)
  \rvert^2
\\
& \quad\times
  dx_1{\dotsm}dx_{j-1}dx_{j+1}{\dotsm}dx_n,
\\
  \delta
& =
  \frac{\theta}{2}+\frac{n}{4}-1
  >
  \frac{n+1}{2},  
\end{align*}
where $A$ and $\nu$ are positive constans determined later, 
$$
N(l)
=
\sum_{m=0}^l\frac{(l+n-1)!}{l!(n-1)!},
$$
\begin{align*}
  k^l(t,x,\xi) 
& =
  [I_{N(l)}k_1^l(t,x,\xi)]\oplus[I_{N(l)}k_1^l(t,x,\xi)^{-1}],
\\
  k^l_{\text{inv}}(t,x,\xi)
& =
  k^l(t,x,\xi)^{-1},
\end{align*}
$$
K^l(t)=k^l(t,x,D),
\quad
K^l_{\text{inv}}(t)=k^l_{\text{inv}}(t,x,D),
$$
$$
\tilde{\Lambda}^l(t)
=
\frac{1}{2}
\sum_{j=1}^n
E_{2N(l)}
\Bigl(
B_j^l(t,x)-B_j^{l,\text{diag}}(t,x)
\Bigr)
\p_j(\nu^2-\Delta)^{-1},
$$
$$
\Lambda^l(t)
=
I_{2N(l)}-i\tilde{\Lambda}^l(t),
\quad
\Lambda^l_{\text{inv}}(t)
=
I_{2N(l)}+i\tilde{\Lambda}^l(t).
$$
\par
First we determine $A$ and $\nu$. 
On one hand, 
in the same way as the proof of Lemma~\ref{theorem:coefficients}, 
we have 
$$
\sum_{j=1}^n
\lvert{B_j^l(t,x)}\rvert
\leqslant
C_{\theta,n}^2
\phi^l(t,x_m)
\sum_{p=1}^\infty
A_p
\Bigl(C_{\theta,n}X_{\theta-2,s,r}^{l-1}(t)\Bigr)^{2p-1}
$$
for $(t,x)\in[-T,T]\times\mathbb{R}^n$ and $m=1,\dotsc,n$. 
Hence, if $X_{\theta-2,s,r}^l(t)\leqslant4M$, then 
there exists a positive constant 
A depending only on $M$, $\theta$, $n$ and $\{A_p\}_{p=1,2,3\dotsc}$ 
such that 
$\lvert{B_j^l(t,x)}\rvert\leqslant{A}\phi^l(t,x_m)$ 
for $(t,x)\in[-T,T]\times\mathbb{R}^n$ and $m=1,\dotsc,n$. 
On the other hand, it is easy to see that 
\begin{align*}
  K^l(t)\Lambda^l(t)\Lambda^l_{\text{inv}}(t)K^l_{\text{inv}}(t) 
& =
  I_{N(l)}+R^l_1(t),
\\
  \Lambda^l_{\text{inv}}(t)K^l_{\text{inv}}(t)K^l(t)\Lambda^l(t) 
& =
  I_{N(l)}+R^l_2(t).
\end{align*}
We here remark that 
$R^l_1(t)$ and $R^l_2(t)$ are pseudodifferential operators of order $-1$ 
and 
$$
\lVert{R^l_1(t))}\rVert,
\lVert{R^l_2(t)}\rVert
=
\mathcal{O}(\nu^{-1}). 
$$
Nagase's theorem shows that 
if $X_{\theta-1,s,r}^l(t)\leqslant2M$, 
then there exist $\nu_0\geqslant1$ and $C_M>0$ which are independent of $l$, 
such that 
$K^l(t)\Lambda^l(t)$ is invertible, and 
\begin{equation}
\lVert{K^l(t)\Lambda^l(t)}\rVert, 
\lVert{(K^l(t)\Lambda^l(t))^{-1}}\rVert
\leqslant
C_M
\label{equation:normestimates}
\end{equation}
for $\nu\geqslant\nu_0$. 
Set $\nu=\nu_0$ below. 
\par
Now we begin the proof of \eqref{equation:convergenceatt} for some small $T>0$. 
Without loss of generality, we may assume that 
$$
\lVert{K^l(0)\Lambda^l(0)w^l(0)}\rVert
\leqslant
M
$$
for $l=0,1,2,\dotsc$. 
It suffices to consider only the forward direction in time. 
Let $T_l$ be a positive time defined by 
$$
T_l
=
\sup
\Bigl\{
T>0
\ \Big\vert \ 
X_{\theta-1,s,r}^l(t)^2
+
\lVert{K^l(t)\Lambda^l(t)w^l(t)}\rVert^2
\leqslant
4M^2 
\quad \text{for}\quad 
t\in[0,T]
\Bigr\}.
$$
We remark that \eqref{equation:normestimates} is valid for $t\in[0,T_l]$. 
Using the Schwarz inequality, we have 
\begin{align}
  \frac{d}{dt}
  X_{\theta-1,s,r}^l(t)^2 
& =
  2\re
  \sum_{\lvert\alpha\rvert\leqslant{l}}
  \left(
  \frac{r^{\lvert\alpha\rvert}}{\alpha_\ast!^s}
  \p_t\langle{D}\rangle^{\theta-1}J^\alpha{u},
  \frac{r^{\lvert\alpha\rvert}}{\alpha_\ast!^s}
  \langle{D}\rangle^{\theta-1}J^\alpha{u},
  \right)
\nonumber
\\
& =
  2\re
  \sum_{\lvert\alpha\rvert\leqslant{l}}
  \left(
  \frac{r^{\lvert\alpha\rvert}}{\alpha_\ast!^s}
  i\Delta\langle{D}\rangle^{\theta-1}J^\alpha{u},
  \frac{r^{\lvert\alpha\rvert}}{\alpha_\ast!^s}
  \langle{D}\rangle^{\theta-1}J^\alpha{u},
  \right)
\nonumber
\\
& +
  2\re
  \sum_{\lvert\alpha\rvert\leqslant{l}}
  \left(
  \frac{r^{\lvert\alpha\rvert}}{\alpha_\ast!^s}
  \langle{D}\rangle^{\theta-1}J^\alpha{f},
  \frac{r^{\lvert\alpha\rvert}}{\alpha_\ast!^s}
  \langle{D}\rangle^{\theta-1}J^\alpha{u},
  \right)
\nonumber
\\
& =
  2\re
  \sum_{\lvert\alpha\rvert\leqslant{l}}
  \left(
  \frac{r^{\lvert\alpha\rvert}}{\alpha_\ast!^s}
  \langle{D}\rangle^{\theta-1}J^\alpha{f},
  \frac{r^{\lvert\alpha\rvert}}{\alpha_\ast!^s}
  \langle{D}\rangle^{\theta-1}J^\alpha{u},
  \right)
\nonumber
\\
& \leqslant
  2
  \sum_{\lvert\alpha\rvert\leqslant{l}}
  \frac{r^{2\lvert\alpha\rvert}}{\alpha_\ast!^{2s}}
  \lVert\langle{D}^{\theta-1}\rangle^{\theta-1}J^\alpha{f}\rVert
  \lVert{J^\alpha}u\rVert_{\theta-1}
\nonumber
\\
& \leqslant
  2
  \left(
  \sum_{\lvert\alpha\rvert\leqslant{l}}
  \frac{r^{2\lvert\alpha\rvert}}{\alpha_\ast!^{2s}}
  \lVert\langle{D}\rangle^{\theta-1}J^\alpha{f(t)}\rVert^2
  \right)^{1/2}
  X_{\theta,s,r,}^l(t).
\label{equation:xl}
\end{align}
In the same way as Lemma~\ref{theorem:lower}, we can get 
\begin{align*}
& \left(
  \sum_{\lvert\alpha\rvert\leqslant{l}}
  \frac{r^{2\lvert\alpha\rvert}}{\alpha_\ast!^{2s}}
  \lVert\langle{D}\rangle^{\theta-1}J^\alpha{f(t)}\rVert^2
  \right)^{1/2} 
  X_{\theta,s,r,}^l(t)
\\
& \leqslant
  C_{\theta,n}
  \sum_{p=1}^\infty
  A_pC_{\theta,n}^{2p+1}
  \Bigl(X_{\theta,s,r}^l(t)\Bigr)^{2p+2}
\\
& \leqslant
  C_{\theta,n}
  \sum_{p=1}^\infty
  A_pC_{\theta,n}^{2p+1}C_M^{2p+2}
  \lVert{K^l(t)\Lambda^l(t)w^l(t)}\rVert^{2p+2}
\\
& \leqslant
  C_{\theta,n}
  \sum_{p=1}^\infty
  A_pC_{\theta,n}^{2p+1}C_M^{2p+2}M^2p
  \lVert{K^l(t)\Lambda^l(t)w^l(t)}\rVert^2.
\end{align*}
For $R=2C_{\theta,n}C_MM$, there exists a positive constant $C_R$ such that 
$$
\left(
\sum_{\lvert\alpha\rvert\leqslant{l}}
\frac{r^{2\lvert\alpha\rvert}}{\alpha_\ast!^{2s}}
\lVert\langle{D}\rangle^{\theta-1}J^\alpha{f(t)}\rVert^2
\right)^{1/2}
X_{\theta,s,r,}^l(t) 
\leqslant
C_{\theta,n}C_R
\lVert{K^l(t)\Lambda^l(t)w^l(t)}\rVert^2.
$$
Substituting this into \eqref{equation:xl}, we obtain 
\begin{equation}
\frac{d}{dt}
X_{\theta-1,s,r}^l(t)^2
\leqslant
2C_{\theta,n}C_R
\lVert{K^l(t)\Lambda^l(t)w^l(t)}\rVert^2.
\label{equation:energy1}
\end{equation}
\par
On the other hand, $w^l$ solves 
$$
\left(
I_{2N(l)}\p_t
-
i\Delta{E_{2N(l)}}
+
\sum_{j=1}^n
B_j^l(t,x)\p_j
\right)
w^l
=
g^l.
$$
By using Lemma~\ref{theorem:doi}, we have 
\begin{align*}
  \frac{d}{dt}
  \lVert{K^l(t)\Lambda^l(t)w^l(t)}\rVert^2
& \leqslant
  2\mathcal{C}^l(t)
  \lVert{K^l(t)\Lambda^l(t)w^l(t)}\rVert^2
\\
& +
  2
  \lVert{K^l(t)\Lambda^l(t)w^l(t)}\rVert
  \lVert{K^l(t)\Lambda^l(t)g^l(t)}\rVert,
\end{align*}
where $\mathcal{C}^l(t)$ depends only on 
$$
\int_{\mathbb{R}}
\phi^l(t,s)
ds,
\quad
\lVert\phi^l(t)\rVert_{\mathscr{B}^2},
\quad
\sup_{z\in\mathbb{R}}
\left\lvert
\int_{-\infty}^z
\p_t\phi^l(t,x)
ds
\right\rvert,
$$
$$
\sum_{j=1}^n
\Bigl(
\lVert{B_j^l(t)}\rVert_{\mathscr{B}^2}
+
\lVert{\p_tB_j^l(t)}\rVert_{\mathscr{B}^0}
\Bigr).
$$
It is easy to see that 
\begin{equation}
\int_{\mathbb{R}}
\phi^l(t,s)
ds,
\quad
\lVert\phi^l(t)\rVert_{\mathscr{B}^2}
\leqslant
X_{\theta,s,r}^l(t).
\label{equation:coef1}
\end{equation}
Using 
$$
\p_t\langle{D}\rangle^\delta{J^\alpha}u
=
i\Delta\langle{D}\rangle^\delta{J^\alpha}u
+
\langle{D}\rangle^\delta{J^\alpha}f,
$$
and the integration by parts, 
we deduce 
\begin{align}
  \sup_{z\in\mathbb{R}}
  \left\lvert
  \int_{-\infty}^z
  \p_t\phi^l(t,x)
  ds
  \right\rvert
& \leqslant
  C\{\theta,n\}X_{\theta,s,r}^l(t)
\nonumber
\\
& \qquad\times
  \left(
  1
  +
  \sum_{p=1}^\infty
  A_pC_{\theta,n}^{2p}
  \Bigl(X_{\theta,s,r}^l(t)\Bigr)^{2p}
  \right).
\label{equation:coef2}
\end{align}
\eqref{equation:coef1}, 
\eqref{equation:coef2} 
and 
Lemma~\ref{theorem:coefficients} 
show that 
there exists a positive constant $D_M$ 
which depends only on $M$, $\theta$ and $n$, and is independent of $l$, 
such that $\mathcal{C}^l(t)\leqslant{D_M}$ for $t\in[0,T_l]$. 
On the other hand, Lemma~\ref{theorem:lower} shows that 
\begin{align*}
  \lVert{K^l(t)\Lambda^l(t)g^l(t)}\rVert
& \leqslant
  C_M
  \left(
  \sum_{\lvert\alpha\rvert\leqslant{l}}
  \frac{r^{2\lvert\alpha\rvert}}{\alpha_\ast!^{2s}}
  \lVert{f_{\theta,\alpha}(t)}\rVert^2
  \right)^{1/2}
\\
& \leqslant
  C_MC_{\theta,n}
  \sum_{p=1}^\infty
  A_pC_{\theta,n}^{2p+1}
  \Bigl(X_{\theta,s,r}^l(t)\Bigr)^{2p+1}
\\
& \leqslant
  C_MC_{\theta,n}C_R
  \lVert{K^l(t)\Lambda^l(t)w^l(t)}\rVert.
\end{align*}
Hence, we have 
\begin{equation}
\frac{d}{dt}
\lVert{K^l(t)\Lambda^l(t)w^l(t)}\rVert^2
\leqslant
2(D_M+C_MC_{\theta,n}C_R)
\lVert{K^l(t)\Lambda^l(t)w^l(t)}\rVert^2.
\label{equation:energy2}
\end{equation}
Combining \eqref{equation:energy1} and \eqref{equation:energy2}, 
we obtain 
\begin{align}
& \frac{d}{dt}
  \Bigl\{
  X_{\theta-1,s,r}^l(t)^2
  +
  \lVert{K^l(t)\Lambda^l(t)w^l(t)}\rVert^2
  \Bigr\}
\nonumber
\\
  \leqslant
& 2C_1
  \Bigl\{
  X_{\theta-1,s,r}^l(t)^2
  +
  \lVert{K^l(t)\Lambda^l(t)w^l(t)}\rVert^2
  \Bigr\},
\label{equation:energy3}
\end{align}
where $C_1$ depends only on $M$, $\theta$ and $n$, 
and is independent of $l$. 
Integrating \eqref{equation:energy3} over $[0,T_l]$, 
we obtain $4M^2\leqslant2M^2\exp(2C_1T_l)$, 
which implies that $T_l\geqslant\log2/2C_1>0$. 
Set $T^\ast=\log2/2C_1$ for short. 
For all $l$, we obtain 
$$
X_{\theta-1,s,r}^l(t), 
\lVert{K^l(t)\Lambda^l(t)w^l(t)}\rVert 
\leqslant
2M
$$
for $t\in[0,T^\ast]$. 
Hence, \eqref{equation:normestimates} shows that 
\begin{align*}
  X_{\theta,s,r}^l(t)
& =
  \frac{1}{2}
  \lVert{w^l(t)}\rVert
\\
& =
  \frac{1}{2}
  \lVert{(K^l(t)\Lambda^l(t))^{-1}K^l(t)\Lambda^l(t)w^l(t)}\rVert
\\
& \leqslant
  \frac{C_M}{2}
  \lVert{K^l(t)\Lambda^l(t)w^l(t)}\rVert
\\
& \leqslant
  C_MM
\end{align*}
for $t\in[0,T^\ast]$. 
Thus we obtain 
$X_{\theta,s,r,}^\infty(t)\leqslant{C_MM}$
for $t\in[0,T^\ast]$. 
This completes the proof of the uniform energy estimates. 

\section{Gevrey estimates of solutions}
\label{section:proof}
In this section we complete the proof of Theorem~\ref{theorem:main}. 
For $k\in\mathbb{N}\cup\{0\}$ and a multi-index
$\alpha=(\alpha_1,\dotsc,\alpha_1)$, 
set $k_+=\max\{k,1\}$ and
$\alpha_+=((\alpha_1)_+,\dotsc,(\alpha_n)_+)$. 
In the previous section, we have proved that 
$$
\sum_\alpha
\frac{r^{2\lvert\alpha\rvert}}{\alpha_\ast!^{2s}}
\lVert{J^\alpha}u(t)\rVert_\theta^2
\leqslant
M^2
$$
for $t\in[-T,T]$ with some positive constants $r$, $M$ and $s\geqslant1$. 
The Schwartz inequality shows that 
\begin{align}
  \sum_\alpha
  \frac{r^{\lvert\alpha\rvert}}{\alpha!^s}
  \lVert{J^\alpha}u(t)\rVert_\theta
& =
  \sum_\alpha
  \frac{1}{\alpha_+^s}
  \frac{r^{\lvert\alpha\rvert}}{\alpha_\ast!^s}
  \lVert{J^\alpha}u(t)\rVert_\theta
\nonumber
\\
& \leqslant
  \left(
  \sum_\alpha
  \frac{1}{\alpha_+^{2s}}
  \right)^{1/2}
  \left(
  \sum_\alpha
  \frac{r^{2\lvert\alpha\rvert}}{\alpha_\ast!^{2s}}
  \lVert{J^\alpha}u(t)\rVert_\theta^2
  \right)^{1/2}
\nonumber
\\
& \leqslant
  a^nM
\label{equation:gev71}
\end{align}
for $t\in[-T,T]$. 
In order to obtain \eqref{equation:gevrey} from \eqref{equation:gev71}, 
we need two lemmas. 
\begin{lemma}
\label{theorem:hermite} 
Suppose that $s\geqslant1/2$ and 
$$
\sum_\alpha
\frac{r^{\lvert\alpha\rvert}}{\alpha!^s}
\lVert{J^\alpha}u(t)\rVert_\theta
\leqslant
M_0
$$
for $t\in[-T,T]\setminus\{0\}$. 
Then there here exist positive constants $\rho$ and $M_1$ such that 
$$
\sum_\alpha
\frac{1}{\lvert\alpha\rvert!^s}
\left(\frac{\lvert{t}\rvert}{\rho}\right)^{\lvert\alpha\rvert}
\lVert\langle{x}\rangle^{-\lvert\alpha\rvert}\p^\alpha{u(t)}\rVert_\theta
\leqslant
M_1
$$
for $t\in[-T,T]\setminus\{0\}$. 
\end{lemma}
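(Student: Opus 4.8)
The plan is to reduce everything to the conjugation identity $J_k=e^{i\psi}\circ(2it\,\partial_k)\circ e^{-i\psi}$ with $\psi=|x|^2/4t$ (the same $\psi$ as in Lemma~\ref{theorem:lower}), equivalently $J^\alpha=e^{i\psi}(2it)^{|\alpha|}\partial^\alpha e^{-i\psi}$, so that $\partial^{\alpha-\beta}(e^{-i\psi}u)=(2it)^{-|\alpha-\beta|}e^{-i\psi}J^{\alpha-\beta}u$. Writing $u=e^{i\psi}(e^{-i\psi}u)$ and applying the Leibniz rule, for $t\neq0$ one gets
$$
\langle x\rangle^{-|\alpha|}\partial^\alpha u
=\sum_{\beta\leqslant\alpha}\binom{\alpha}{\beta}m_{\alpha,\beta}(x)\,J^{\alpha-\beta}u,
\qquad
m_{\alpha,\beta}(x)=(2it)^{-|\alpha-\beta|}\langle x\rangle^{-|\alpha|}(\partial^\beta e^{i\psi})e^{-i\psi}.
$$
A direct computation gives $(\partial^\beta e^{i\psi})e^{-i\psi}=\prod_{k=1}^{n}(-c)^{\beta_k}H_{\beta_k}(cx_k)$, where $H_m$ is the Hermite polynomial of degree $m$ and $c=\sqrt{-i/4t}$, $|c|=(4|t|)^{-1/2}$; thus $m_{\alpha,\beta}$ is $\langle x\rangle^{-|\alpha|}$ times a polynomial of degree $|\beta|\leqslant|\alpha|$, hence a bounded smooth function. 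This is exactly where $s\geqslant1/2$ enters, and why the lemma deserves its name: the sum of the moduli of the coefficients of $H_m$ is bounded by $C^{m}\sqrt{m!}$ (a classical Hermite/involution-number bound), and the factor $\sqrt{m!}$ is what $|\alpha|!^{-s}$ can absorb precisely when $s\geqslant1/2$.

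Next I would estimate $m_{\alpha,\beta}$ in $\mathscr{B}^\nu$, with $\nu$ a fixed integer $>|\theta|$. Since $m_{\alpha,\beta}=\langle x\rangle^{-|\alpha|}P$ with $P$ of degree $\leqslant|\alpha|$, every derivative of order $\leqslant\nu$ of $m_{\alpha,\beta}$ is bounded by $C_\nu|\alpha|^{\nu}$ times the $\ell^1$-norm of the coefficient vector of $P$; combining this with $|c|=(4|t|)^{-1/2}$, the Hermite bound above, and the factor $|2t|^{-|\alpha-\beta|}$, one is led (uniformly in $t\in[-T,T]\setminus\{0\}$) to
$$
\|m_{\alpha,\beta}\|_{\mathscr{B}^\nu}\leqslant C^{|\alpha|}|\alpha|^{\nu}\,|t|^{-|\alpha|}\sqrt{\beta!},
$$
with $C$ depending only on $\theta,n,T$. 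By the $L^2$-boundedness theorem for pseudodifferential operators of order zero — applied just as in the proof of Lemma~\ref{theorem:decay} — multiplication by $m_{\alpha,\beta}$ is bounded on $H^\theta$ with operator norm $\lesssim\|m_{\alpha,\beta}\|_{\mathscr{B}^\nu}$, so
$$
\|\langle x\rangle^{-|\alpha|}\partial^\alpha u\|_\theta
\leqslant C^{|\alpha|}|\alpha|^{\nu}\,|t|^{-|\alpha|}\sum_{\beta\leqslant\alpha}\binom{\alpha}{\beta}\sqrt{\beta!}\,\|J^{\alpha-\beta}u\|_\theta .
$$

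Finally I would substitute this into the sum to be bounded: the prefactor $(|t|/\rho)^{|\alpha|}$ cancels $|t|^{-|\alpha|}$ exactly, leaving a $t$-independent estimate. Relabelling $\gamma=\alpha-\beta$, interchanging the nonnegative sums and writing $\alpha=\gamma+\delta$, the coefficient of $\|J^\gamma u\|_\theta$ becomes
$$
\sum_{\delta}\frac{C^{|\gamma|+|\delta|}(|\gamma|+|\delta|)^{\nu}}{(|\gamma|+|\delta|)!^{s}}\,\rho^{-|\gamma|-|\delta|}\,\frac{(\gamma+\delta)!}{\gamma!\sqrt{\delta!}},
$$
and using standard factorial inequalities such as $(\gamma+\delta)!\leqslant\gamma!\,\delta!\binom{|\gamma|+|\delta|}{|\delta|}$, $\sqrt{\delta!}\leqslant\sqrt{|\delta|!}$, $\gamma!\leqslant|\gamma|!$ and $|\delta|!^{1/2-s}\leqslant1$ (again $s\geqslant1/2$, which also makes the $\delta$-sum converge), this coefficient is $\leqslant C_1(1+|\gamma|)^{\nu}\gamma!^{-s}(2^{1-s}C/\rho)^{|\gamma|}\leqslant C'r^{|\gamma|}/\gamma!^{s}$ once $\rho$ is taken large enough, depending only on $r,s,\theta,n,T$. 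Hence
$$
\sum_\alpha\frac{1}{|\alpha|!^{s}}\Bigl(\frac{|t|}{\rho}\Bigr)^{|\alpha|}\|\langle x\rangle^{-|\alpha|}\partial^\alpha u\|_\theta
\leqslant C'\sum_\gamma\frac{r^{|\gamma|}}{\gamma!^{s}}\|J^\gamma u\|_\theta
\leqslant C'M_0=:M_1 .
$$
I expect the only genuinely delicate point to be the uniformity in $t$ as $t\to0$: one must use that $m_{\alpha,\beta}$ is literally $\langle x\rangle^{-|\alpha|}$ times a polynomial of degree $\leqslant|\alpha|$ — so that differentiating it costs only the harmless polynomial factor $|\alpha|^{\nu}$ and no extra negative power of $|t|$ — and that the powers of $|t|^{-1/2}$ produced by $c$ and by $\partial^\beta e^{i\psi}$ assemble to exactly $|t|^{-|\alpha|}$, which is matched by the $(|t|/\rho)^{|\alpha|}$ in the statement.
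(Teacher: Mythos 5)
Your proposal is correct and follows essentially the same route as the paper: the conjugation $J^\alpha=e^{i\psi}(2it)^{|\alpha|}\p^\alpha e^{-i\psi}$ plus the Leibniz rule, the explicit Hermite expansion of $(\p^\beta e^{i\psi})e^{-i\psi}$ with its coefficient bound $C^{|\beta|}\sqrt{\beta!}$ (the paper's inequality \eqref{equation:72}), the weighted multiplier estimate $\lVert x^{\beta-2\gamma}\langle x\rangle^{-|\alpha|}v\rVert_\theta\leqslant C|\alpha|^{N}\lVert v\rVert_\theta$, and the use of $s\geqslant 1/2$ to absorb $\sqrt{\beta!}$. The only differences are bookkeeping: the paper first bounds the sum weighted by $(|\alpha|+2\Theta)!^{-s}$ and then trades $\rho_0$ for $2^s\rho_0$ to return to $|\alpha|!^{-s}$, whereas you absorb the polynomial factor $|\alpha|^\nu$ directly into the geometric factor $C^{|\alpha|}$; both are sound.
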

\begin{lemma}
\label{theorem:weight}
For any smooth function $u$ of $(t,x)$, 
\begin{align}
& \lVert
  \langle{x}\rangle^{-\lvert\alpha\rvert-2m}
  \p_t^m\p^{\alpha+e_j}
  u(t)
  \rVert_{\theta-1}
\nonumber
\\
& \qquad
  \leqslant
  C_0(\lvert\alpha\rvert+2m)_+
  \lVert
  \langle{x}\rangle^{-\lvert\alpha\rvert-2m}
  \p_t^m\p^\alpha
  u(t)
  \rVert_{\theta},
\label{equation:weight1} 
\\
& \lVert
  \langle{x}\rangle^{-2}\p_j
  \{
  \langle{x}\rangle^{-\lvert\alpha\rvert-2m}
  \p_t^m\p^{\alpha+e_k}
  u(t)
  \}
  \rVert_{\theta-1}
\nonumber
\\
& \qquad
  \leqslant
  \lVert
  \langle{x}\rangle^{-\lvert\alpha+e_j+e_k\rvert-2m}
  \p_t^m\p^{\alpha+e_j+e_k}
  u(t)
  \rVert_{\theta-1}
\nonumber
\\
& \qquad
  +
  C_0(\lvert\alpha\rvert+2m)_+
  \lVert
  \langle{x}\rangle^{-\lvert\alpha+e_k\rvert-2m}
  \p_t^m\p^{\alpha+e_k}
  u(t)
  \rVert_{\theta-1},
\label{equation:weight2}
\end{align}
where $C_0>0$ is independent of $\alpha$ and $m$. 
\end{lemma}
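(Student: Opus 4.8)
The plan is to establish both estimates by the same device: commute the derivative $\p_j$ past the weight $\langle x\rangle^{-\lvert\alpha\rvert-2m}$ by the Leibniz rule, and control the resulting commutator term, which gains a power of $\langle x\rangle^{-1}$ at the cost of a scalar factor $\lvert\alpha\rvert+2m$. Throughout I abbreviate $N=\lvert\alpha\rvert+2m$ and $N_+=(\lvert\alpha\rvert+2m)_+$, and I use the elementary identity $\p_j\langle x\rangle^{-N}=-Nx_j\langle x\rangle^{-N-2}$ together with two standard facts: $\p_j\colon H^\theta\to H^{\theta-1}$ is bounded with norm at most $1$, and multiplication by a function $a(x)$ all of whose derivatives are bounded is bounded on $H^{\theta-1}$, with operator norm controlled by finitely many of the quantities $\sup_x\lvert\p^\beta a(x)\rvert$ (apply the $L^2$-boundedness of zero-order pseudodifferential operators to $\langle D\rangle^{\theta-1}a(x)\langle D\rangle^{-(\theta-1)}$). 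In particular, the fixed multipliers $x_j\langle x\rangle^{-2}$ and $x_j\langle x\rangle^{-3}$ have $H^{\theta-1}\to H^{\theta-1}$ norms bounded by an absolute constant; the only $N$-dependence in the argument is the explicit scalar factor $N$.

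For \eqref{equation:weight1}, put $v=\p_t^m\p^\alpha u(t)$, so the left-hand side equals $\lVert\langle x\rangle^{-N}\p_j v\rVert_{\theta-1}$ and the right-hand side equals $C_0N_+\lVert\langle x\rangle^{-N}v\rVert_\theta$. The Leibniz rule gives
\[
\langle x\rangle^{-N}\p_j v
=
\p_j\bigl(\langle x\rangle^{-N}v\bigr)
+
N\,\bigl(x_j\langle x\rangle^{-2}\bigr)\bigl(\langle x\rangle^{-N}v\bigr),
\]
and taking $H^{\theta-1}$-norms, using the boundedness of $\p_j\colon H^\theta\to H^{\theta-1}$ on the first term and of multiplication by $x_j\langle x\rangle^{-2}$ on the second, yields $\lVert\langle x\rangle^{-N}\p_j v\rVert_{\theta-1}\leqslant(1+CN)\lVert\langle x\rangle^{-N}v\rVert_\theta$. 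Since $1+CN\leqslant C_0N_+$ for a suitable $C_0$ independent of $\alpha$ and $m$ (the case $N=0$ being immediate, as the commutator term then vanishes while $N_+=1$), this proves \eqref{equation:weight1}.

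For \eqref{equation:weight2}, put $w=\p_t^m\p^{\alpha+e_k}u(t)$. Unwinding the indices ($\lvert\alpha+e_j+e_k\rvert+2m=N+2$ and $\lvert\alpha+e_k\rvert+2m=N+1$), the left-hand side is $\lVert\langle x\rangle^{-2}\p_j(\langle x\rangle^{-N}w)\rVert_{\theta-1}$ and the two terms on the right are $\lVert\langle x\rangle^{-N-2}\p_j w\rVert_{\theta-1}$ and $C_0N_+\lVert\langle x\rangle^{-N-1}w\rVert_{\theta-1}$. The Leibniz rule, followed by multiplication by $\langle x\rangle^{-2}$, gives
\[
\langle x\rangle^{-2}\p_j\bigl(\langle x\rangle^{-N}w\bigr)
=
\langle x\rangle^{-N-2}\p_j w
-
N\,\bigl(x_j\langle x\rangle^{-3}\bigr)\bigl(\langle x\rangle^{-N-1}w\bigr),
\]
and the triangle inequality together with the $H^{\theta-1}$-boundedness of multiplication by $x_j\langle x\rangle^{-3}$ yields $\lVert\langle x\rangle^{-2}\p_j(\langle x\rangle^{-N}w)\rVert_{\theta-1}\leqslant\lVert\langle x\rangle^{-N-2}\p_j w\rVert_{\theta-1}+CN\lVert\langle x\rangle^{-N-1}w\rVert_{\theta-1}$, which is \eqref{equation:weight2} with $C_0=C$. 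The one point requiring attention—and it is entirely routine—is keeping the weights bookkept correctly so that exactly the right-hand sides appear, and observing that the operator norms of the multiplication operators do not depend on $N$; I expect no genuine obstacle beyond this.
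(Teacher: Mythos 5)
Your proof is correct and is essentially the paper's own argument: the paper likewise writes down the same two Leibniz identities (commuting $\p_j$ past the weight, producing the commutator term $N\,x_j\langle x\rangle^{-2}$ resp.\ $N\,x_j\langle x\rangle^{-3}$ times the weighted function) and then invokes boundedness of $\p_j\colon H^\theta\to H^{\theta-1}$ and of multiplication by these fixed bounded multipliers, treating $N=0$ separately. Your write-up is if anything more explicit about where the uniformity of $C_0$ in $\alpha$ and $m$ comes from.
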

\begin{proof}[Proof of Lemma~\ref{theorem:hermite}] 
Recall the explicit formula of the hermitian polynomial 
$$
e^{-a\tau^2}
\left(\frac{d}{d\tau}\right)^\mu
e^{a\tau^2}
=
\sum_{\nu\leqslant\mu/2}
\frac{\mu!}{\nu!(\mu-2\nu)!}
a^{\mu-\nu}
(2\tau)^{\mu-2\nu}
$$
for $a,\tau\in\mathbb{R}$ and $\mu\in\mathbb{N}$. 
Applying this to the definition of the operator $J$, we deduce 
\begin{align*}
  \p^\alpha{u} 
& =
  \p^\alpha
  \Bigl\{
  e^{i\lvert{x}\rvert^2/4t}
  (e^{-i\lvert{x}\rvert^2/4t}u)
  \Bigr\}
\\
& =
  \sum_{\beta\leqslant\alpha}
  \frac{\alpha!}{\beta!(\alpha-\beta)!}
  \Bigl(
  e^{-i\lvert{x}\rvert^2/4t}
  \p^\beta
  e^{i\lvert{x}\rvert^2/4t}
  \Bigr)
  \Bigl\{
  e^{i\lvert{x}\rvert^2/4t}
  \p^{\alpha-\beta}
  \Bigl(e^{-i\lvert{x}\rvert^2/4t}u)\Bigr)
  \Bigr\}
\\
& =
  \sum_{\beta\leqslant\alpha}
  \frac{\alpha!}{\beta!(\alpha-\beta)!}
  \left(\frac{1}{2it}\right)^{\lvert\alpha-\beta\rvert}
  J^{\alpha-\beta}u
\\
& \qquad
  \times
  \sum_{\gamma\leqslant\beta/2}
  \frac{\beta!}{\gamma!(\beta-2\gamma)!}
  \left(\frac{i}{4t}\right)^{\lvert\beta-\gamma\rvert}
  (2x)^{\beta-2\gamma}. 
\end{align*}
Set $\Theta=[\theta]+1$ and 
$\rho_0=\max\{4,2(1+T),1/r^2\}$.  
Here we remark that $1-s\leqslant1/2$ since $s\geqslant1/2$. 
We deduce
\begin{align}
& \frac{1}{(\lvert\alpha\rvert+2\Theta)!^s}
  \left(\frac{\lvert{t}\rvert}{\rho_0}\right)^{\lvert\alpha\rvert}
  \lVert\langle{x}\rangle^{-\lvert\alpha\rvert}\p^\alpha{u}\rVert_\theta
\nonumber
\\
& \leqslant
  \frac{1}{\alpha!^s\lvert\alpha\rvert^{2\Theta}\rho_0^{\lvert\alpha\rvert}}
  2^{-\lvert\alpha\rvert}
  \sum_{\beta\leqslant\alpha}
  \sum_{\gamma\leqslant\beta/2}
\nonumber
\\
& \quad
  \times
  \lvert{t}\rvert^{\lvert\gamma\rvert}
  \frac{\alpha!}{\beta!(\alpha-\beta)!}
  \frac{\beta!}{\gamma!(\beta-2\gamma)!}
  \left\lVert
  \frac{x^{\beta-2\gamma}}{\langle{x}\rangle^{\lvert\alpha\rvert}}
  J^{\alpha-\beta}u
  \right\rVert_\theta
\nonumber
\\
& = 
  \frac{2^{-\lvert\alpha\rvert/2}}{\lvert\alpha\rvert^{2\Theta}\rho_0^{\lvert\alpha\rvert}}
  \sum_{\beta\leqslant\alpha}
  \sum_{\gamma\leqslant\beta/2}
\nonumber
\\
& \quad
  \times
  \lvert{t}\rvert^{\lvert\gamma\rvert}
  \frac{\alpha!^{1-s}}{\beta!^{1-s}(\alpha-\beta)!^{1-s}}
  \frac{\beta!^{1-s}}{\gamma!(\beta-2\gamma)!}
  \frac{1}{(\alpha-\beta)!^s}
  \left\lVert
  \frac{x^{\beta-2\gamma}}{\langle{x}\rangle^{\lvert\alpha\rvert}}
  J^{\alpha-\beta}u
  \right\rVert_\theta
\nonumber
\\
& \leqslant
  \frac{2^{-\lvert\alpha\rvert/2}}{\lvert\alpha\rvert^{2\Theta}\rho_0^{\lvert\alpha\rvert}}
  \sum_{\beta\leqslant\alpha}
  \sum_{\gamma\leqslant\beta/2}
\nonumber
\\
& \quad
  \times
  (1+T)^{\lvert\beta\rvert/2}
  \frac{\beta!^{1/2}}{\gamma!(\beta-2\gamma)!}
  \frac{1}{(\alpha-\beta)!^s}
  \left\lVert
  \frac{x^{\beta-2\gamma}}{\langle{x}\rangle^{\lvert\alpha\rvert}}
  J^{\alpha-\beta}u
  \right\rVert_\theta.
\label{equation:71}
\end{align}
We remark that there exists a constant $C_0>0$ which is independent of 
$\alpha$, $\beta$ and $\gamma$, such that 
\begin{equation}
\lVert
x^{\beta-2\gamma}
\langle{x}\rangle^{-\lvert\alpha\rvert}
v
\rVert_\theta
\leqslant
C_0\lvert\alpha\rvert^{2\Theta}
\lVert{v}\rVert_\theta. 
\label{equation:73}
\end{equation}
On the other hand, 
$\beta!^{1/2}\leqslant2^{\lvert\beta\rvert/2}(\beta-[\beta/2])!$ 
since
$$
\frac{\beta!}{(\beta-[\beta/2])!^2}
\leqslant
\frac{[\beta/2]!}{[\beta/2]!(\beta-[\beta])!}
\leqslant
2^{\lvert\beta\rvert}.
$$
Hence, we have 
\begin{align}
  \sum_{\gamma\leqslant\beta/2}
  \frac{\beta!^{1/2}}{\gamma!(\beta-2\gamma)!}   
& \leqslant
  2^{\lvert\beta\rvert/2}
  \sum_{\gamma\leqslant\beta/2}
  \frac{(\beta-[\beta/2])!}{\gamma!(\beta-2\gamma)!}   
\nonumber
\\
& \leqslant
  2^{\lvert\beta\rvert/2}
  \sum_{\gamma\leqslant\beta/2}
  \frac{(\beta-[\beta/2])!}{\gamma!(\beta-[\beta/2]-\gamma)!}   
\nonumber
\\
& \leqslant
  2^{\lvert\beta\rvert/2}
  \sum_{\gamma\leqslant\beta-[\beta/2]}
  \frac{(\beta-[\beta/2])!}{\gamma!(\beta-[\beta/2]-\gamma)!}   
\nonumber
\\
& \leqslant
  2^{3\lvert\beta\rvert/2-[\beta/2]}
  \leqslant
  2^{\lvert\beta\rvert+n}
\label{equation:72}
\end{align}
Using 
\eqref{equation:gev71}, 
\eqref{equation:71}, 
\eqref{equation:73} 
and 
\eqref{equation:72}, 
we deduce 
\begin{align*}
& \frac{1}{(\lvert\alpha\rvert+2\Theta)!^s}
  \left(\frac{\lvert{t}\rvert}{\rho_0}\right)^{\lvert\alpha\rvert}
  \lVert\langle{x}\rangle^{-\lvert\alpha\rvert}\p^\alpha{u}\rVert_\theta
\\
& \leqslant
  \frac{C_02^{-\lvert\alpha\rvert/2}}{\rho_0^{\lvert\alpha\rvert}}
  \sum_{\beta\leqslant\alpha}
  (1+T)^{\lvert\beta\rvert/2}
  \frac{1}{(\alpha-\beta)!^s}
  \lVert{J^{\alpha-\beta}u}\rVert_\theta
  \sum_{\gamma\leqslant\beta/2}
  \frac{\beta!^{1/2}}{\gamma!(\beta-2\gamma)!}
\\
& \leqslant
  \frac{2^nC_0}{\rho_0^{\lvert\alpha\rvert}}
  \sum_{\beta\leqslant\alpha}
  \{2(1+T)\}^{\lvert\beta\rvert/2}
  \frac{1}{(\alpha-\beta)!^s}
  \lVert{J^{\alpha-\beta}u}\rVert_\theta
\\
& \leqslant
  2^nC_0\rho_0^{-\lvert\alpha\rvert/2}
  \sum_{\beta\leqslant\alpha}
  \left(
  \frac{2(1+T)}{\rho_0}
  \right)^{\lvert\beta\rvert/2}
  (\rho_0^{1/2}r)^{-\lvert\alpha-\beta\rvert}
  \frac{r^{\lvert\alpha-\beta\rvert}}{(\alpha-\beta)!^s}
  \lVert{J^{\alpha-\beta}u}\rVert_\theta
\\
& \leqslant
  2^nC_0\rho_0^{-\lvert\alpha\rvert/2}
  \sum_{\beta}
  \frac{r^{\lvert\beta\rvert}}{\beta!^s}
  \lVert{J^{\beta}u}\rVert_\theta
\\
& \leqslant
  2^nM_0\rho_0^{-\lvert\alpha\rvert/2}
  \leqslant
  2^{n-\lvert\alpha\rvert}C_0M_0.
\end{align*}
Thus
$$
\sum_\alpha
\frac{1}{(\lvert\alpha\rvert+2\Theta)!^s}
\left(\frac{\lvert{t}\rvert}{\rho_0}\right)^{\lvert\alpha\rvert}
\lVert\langle{x}\rangle^{-\lvert\alpha\rvert}\p^\alpha{u}\rVert_\theta
\leqslant
2^{2n}C_0M_0.
$$
If we set $\rho=2^s\rho_0$, we have 
\begin{align*}
& \sum_\alpha
  \frac{1}{\lvert\alpha\rvert!^s}
  \left(\frac{\lvert{t}\rvert}{\rho}\right)^{\lvert\alpha\rvert}
  \lVert\langle{x}\rangle^{-\lvert\alpha\rvert}\p^\alpha{u}\rVert_\theta
\\
& \leqslant
  \frac{2^{2s\Theta}}{(2\Theta)!^s}
  \sum_\alpha
  \frac{1}{(\lvert\alpha\rvert+2\Theta)!^s}
  \left(\frac{2^s\lvert{t}\rvert}{\rho}\right)^{\lvert\alpha\rvert}
  \lVert\langle{x}\rangle^{-\lvert\alpha\rvert}\p^\alpha{u}\rVert_\theta
\\
& =
  \frac{2^{2s\Theta}}{(2\Theta)!^s}
  \sum_\alpha
  \frac{1}{(\lvert\alpha\rvert+2\Theta)!^s}
  \left(\frac{\lvert{t}\rvert}{\rho_0}\right)^{\lvert\alpha\rvert}
  \lVert\langle{x}\rangle^{-\lvert\alpha\rvert}\p^\alpha{u}\rVert_\theta
\\
& \leqslant
  \frac{2^{2n+2s\Theta}C_0a^nM}{(2\Theta)!^s}.
\end{align*}
This completes the proof. 
\end{proof}
\begin{proof}[Proof of Lemma~\ref{theorem:weight}]
When $\lvert\alpha\rvert+2m=0$,  
\eqref{equation:weight1} and \eqref{equation:weight2} are obvious. 
When $\lvert\alpha\rvert+2m\ne0$, 
\eqref{equation:weight1} follows from
\begin{align*}
  \langle{x}\rangle^{-\lvert\alpha\rvert-2m}
  \p_t^m\p^{\alpha+e_j}u 
& =
  \p_j
  \Bigl(
  \langle{x}\rangle^{-\lvert\alpha\rvert-2m}
  \p_t^m\p^{\alpha}u 
  \Bigr)
\\
& -
  (\lvert\alpha\rvert+2m)
  \frac{2x_j}{\langle{x}\rangle^2}
  \langle{x}\rangle^{-\lvert\alpha\rvert+2m}
  \p_t^m\p^\alpha{u}, 
\end{align*}
and \eqref{equation:weight2} follows from 
\begin{align*}
  \langle{x}\rangle^{-2}\p_j
  \{\langle{x}\rangle^{-\lvert\alpha\rvert-2m}\p_t^m\p^{\alpha+e_k}u\}   
& =
  \langle{x}\rangle^{-\lvert\alpha\rvert-2m-2}\p_t^m\p^{\alpha+e_j+e_k}u
\\
& -(\lvert\alpha\rvert+2m)
  \frac{2x_j}{\langle{x}\rangle^3}
  \langle{x}\rangle^{-\lvert\alpha\rvert-2m-1}\p_t^m\p^{\alpha+e_k}u.
\end{align*}
This completes the proof.  
\end{proof}
Now we shall complete the proof of Theorem~\ref{theorem:main}. 
\begin{proof}[Proof of Theorem~\ref{theorem:main}] 
We shall obtain \eqref{equation:gevrey} 
from \eqref{equation:pde1} and \eqref{equation:gev71} 
by an induction argument on the order of the time derivatives. 
Suppose $s\geqslant1$ 
Set 
$$
Y^l(t)
=
\sum_{m=0}^l
\sum_\alpha
\frac{\lvert{t}\rvert^{\lvert\alpha\rvert+2m}\rho^{-\lvert\alpha\rvert}\kappa^{-2m}}{(\lvert\alpha\rvert+2m-2)_+!^s}
\lVert
\langle{x}\rangle^{-\lvert\alpha\rvert-2m}\p_t^m\p^\alpha{u(t)}
\rVert_\theta,
$$
with some $\kappa>0$ determined later. 
Lemma~\ref{theorem:hermite} shows that 
there exists a positive constant $M$ such that 
$Y^0(t)\leqslant{M/2}$ for $t\in[-T,T]\setminus\{0\}$. 
Suppose that $Y^l(t)\leqslant{M}$ for $t\ne0$. 
Since $u$ is a solution to \eqref{equation:pde1}, we deduce 
\begin{align}
  Y^{l+1}(t) 
& =
  Y^0(t)
  +
  \sum_{m=1}^{l+1}
  \sum_\alpha
  \frac{\lvert{t}\rvert^{\lvert\alpha\rvert+2m}\rho^{-\lvert\alpha\rvert}\kappa^{-2m}}{(\lvert\alpha\rvert+2m-2)!^s}
  \lVert
  \langle{x}\rangle^{-\lvert\alpha\rvert-2m}\p_t^m\p^\alpha{u(t)}
  \rVert_\theta
\nonumber
\\
& =
  Y^0(t)
\nonumber
\\
& +
  \sum_{m=0}^l
  \sum_\alpha
  \frac{\lvert{t}\rvert^{\lvert\alpha\rvert+2m+2}\rho^{-\lvert\alpha\rvert}\kappa^{-2m-2}}{(\lvert\alpha\rvert+2m)!^s}
  \lVert
  \langle{x}\rangle^{-\lvert\alpha\rvert-2m-2}\p_t^m\p^\alpha\p_tu(t)
  \rVert_\theta  
\nonumber
\\
& \leqslant
  Y^0(t)
\nonumber
\\
& +
  \sum_{j=1}^n
  \sum_{m=0}^l
  \sum_\alpha
  \frac{\lvert{t}\rvert^{\lvert\alpha\rvert+2m+2}\rho^{-\lvert\alpha\rvert}\kappa^{-2m-2}}{(\lvert\alpha\rvert+2m)!^s}
  \lVert
  \langle{x}\rangle^{-\lvert\alpha\rvert-2m-2}\p_t^m\p^{\alpha+2e_j}u(t)
  \rVert_\theta
\nonumber
\\
& +
  \sum_{m=0}^l
  \sum_\alpha
  \frac{\lvert{t}\rvert^{\lvert\alpha\rvert+2m+2}\rho^{-\lvert\alpha\rvert}\kappa^{-2m-2}}{(\lvert\alpha\rvert+2m)!^s}
  \lVert
  \langle{x}\rangle^{-\lvert\alpha\rvert-2m-2}\p_t^m\p^{\alpha}f(u,\p{u})(t)
  \rVert_\theta
\nonumber
\\
& \leqslant
  Y^0(t)
  +
  \frac{\rho^2n}{\kappa^2}Y^l(t)
  +
  Z^l(t)
\nonumber
\\
& \leqslant
  \left(
  \frac{1}{2}
  +
  \frac{\rho^2n}{\kappa^2}
  \right)M
  +
  Z^l(t)
\label{equation:nanako}
\end{align}
where 
$$
Z^l(t)
=
\sum_{m=0}^l
\sum_\alpha
\frac{\lvert{t}\rvert^{\lvert\alpha\rvert+2m+2}\rho^{-\lvert\alpha\rvert}\kappa^{-2m-2}}{(\lvert\alpha\rvert+2m)!^s}
\lVert
\langle{x}\rangle^{-\lvert\alpha\rvert-2m-2}\p_t^m\p^{\alpha}f(u,\p{u})(t)
\rVert_\theta.
$$
The chain rule shows that 
\begin{align*}
  \p_t^m\p^\alpha{f(u,\p{u})}
& =
  \sum_{p=1}^\infty
  \sum_{\substack{\lvert\gamma\rvert=p+1 \\ \lvert\bar{\gamma}\rvert=p}}
  f_{\gamma\bar{\gamma}}
  \sum_{\substack{m_0+\dotsb+m_{2p}=m \\
 \alpha^0+\dotsb+\alpha^{2p}=\alpha}}
\\
& \qquad
  \times
  \frac{\alpha!}{\alpha^0!\dotsm\alpha^{2p}!}
  \frac{m!}{m_0!\dotsm{m_{2p}!}}
  \prod_{q=0}^{2p}
  \p_t^{m_q}\p^{\alpha^q}\p_{q,\gamma\bar{\gamma}}\tilde{u},
\end{align*}
where $\tilde{u}=u$ or $\bar{u}$. 
Set $\p_0=1$ for short. 
Using this and 
$[\p_j,\langle{x}\rangle^{-2}]=\mathcal{O}(\langle{x}\rangle^{-2})$ 
($j=0,1,\dotsc,n$), we have 
\begin{align}
  Z^l(t)
& \leqslant
  \sum_{m=0}^l
  \sum_\alpha
  \frac{\lvert{t}\rvert^{\lvert\alpha\rvert+2m+2}\rho^{-\lvert\alpha\rvert}\kappa^{-2m-2}}{(\lvert\alpha\rvert+2m)!^s}
\nonumber  
\\ 
& \qquad
  \times
  \sum_{p=1}^\infty
  \sum_{\substack{\lvert\gamma\rvert=p+1 \\ \lvert\bar{\gamma}\rvert=p}}
  \lvert{f_{\gamma\bar{\gamma}}}\rvert
  \sum_{\substack{m_0+\dotsb+m_{2p}=m \\ \alpha^0+\dotsb+\alpha^{2p}=\alpha}}
  \frac{\alpha!}{\alpha^0!\dotsm\alpha^{2p}!}
  \frac{m!}{m_0!\dotsm{m_{2p}!}}
\nonumber
\\
& \qquad\qquad
  \times
  \sum_{j=0}^n
  \left\lVert
  \langle{x}\rangle^{-2}
  \p_j
  \left\{
  \prod_{q=0}^{2p}
  \langle{x}\rangle^{-\lvert\alpha^{q}\rvert-2m_q}
  \p_t^{m_q}\p^{\alpha^q}\p_{q,\gamma\bar{\gamma}}\tilde{u}
  \right\}
  \right\rVert_{\theta-1}.
\label{equation:76}
\end{align}
By using Lemmas~\ref{theorem:leibniz} and \ref{theorem:weight}, we deduce
\begin{align}
& \left\lVert
  \langle{x}\rangle^{-2}
  \p_0
  \left\{
  \prod_{q=0}^{2p}
  \langle{x}\rangle^{-\lvert\alpha^{q}\rvert-2m_q}
  \p_t^{m_q}\p^{\alpha^q}\p_{q,\gamma\bar{\gamma}}\tilde{u}
  \right\}
  \right\rVert_{\theta-1}
\nonumber
\\
& \leqslant
  \left\lVert
  \prod_{q=0}^{2p}
  \langle{x}\rangle^{-\lvert\alpha^{q}\rvert-2m_q}
  \p_t^{m_q}\p^{\alpha^q}\p_{q,\gamma\bar{\gamma}}\tilde{u}
  \right\rVert_{\theta-1}  
\nonumber
\\
& \leqslant
  (2p+1)C_0^{2p+1}
  \prod_{q=0}^{2p}
  \lVert
  \langle{x}\rangle^{-\lvert\alpha^{q}\rvert-2m_q}
  \p_t^{m_q}\p^{\alpha^q}\p_{q,\gamma\bar{\gamma}}\tilde{u}
  \rVert_{\theta-1}
\nonumber
\\
& \leqslant
  (2p+1)C_0^{4p+2}
  \prod_{q=0}^{2p}
  (\lvert\alpha^{q}\rvert+2m_q)_+
  \lVert
  \langle{x}\rangle^{-\lvert\alpha^{q}\rvert-2m_q}
  \p_t^{m_q}\p^{\alpha^q}u
  \rVert_{\theta}
\\
& =
  (2p+1)C_0^{4p+2}
  \lvert{t}\rvert^{-\lvert\alpha\rvert-2m}
  \rho^{\lvert\alpha\rvert}
  \kappa^{2m}
  \prod_{q=0}^{2p}
  (\lvert\alpha^{q}\rvert+2m_q)!^s
\nonumber
\\
& \qquad
  \times
  \prod_{q=0}^{2p}
  \frac{\lvert{t}\rvert^{\lvert\alpha^{q}\rvert+2m_q}\rho^{-\lvert\alpha^q\rvert}\kappa^{-2m_q}}{(\lvert\alpha^{q}\rvert+2m_q-2)_+!^s}
  \lVert
  \langle{x}\rangle^{-\lvert\alpha^{q}\rvert-2m_q}
  \p_t^{m_q}\p^{\alpha^q}u
  \rVert_{\theta},
\label{equation:74}
\end{align}
and for $j\ne0$ 
\begin{align}
& \left\lVert
  \langle{x}\rangle^{-2}
  \p_j
  \left\{
  \prod_{q=0}^{2p}
  \langle{x}\rangle^{-\lvert\alpha^{q}\rvert-2m_q}
  \p_t^{m_q}\p^{\alpha^q}\p_{q,\gamma\bar{\gamma}}\tilde{u}
  \right\}
  \right\rVert_{\theta-1}
\nonumber
\\
& \leqslant
  \sum_{r=0}^{2p}
  \Biggl\lVert
  \prod_{\substack{q=0 \\ q\ne{r}}}^{2p}
  \langle{x}\rangle^{-\lvert\alpha^{q}\rvert-2m_q}
  \p_t^{m_q}\p^{\alpha^q}\p_{q,\gamma\bar{\gamma}}\tilde{u}
\nonumber
\\
& \qquad
  \times
  \langle{x}\rangle^{-2}
  \p_j
  \{
  \langle{x}\rangle^{-\lvert\alpha^{r}\rvert-2m_r}
  \p_t^{m_r}\p^{\alpha^r}\p_{r,\gamma\bar{\gamma}}\tilde{u}
  \}  
  \Biggr\rVert_{\theta-1}
\nonumber
\\
& \leqslant
  (2p+1)C_0^{2p+1}
  \sum_{r=0}^{2p}
  \prod_{\substack{q=0 \\ q\ne{r}}}^{2p}
  \lVert
  \langle{x}\rangle^{-\lvert\alpha^{q}\rvert-2m_q}
  \p_t^{m_q}\p^{\alpha^q}\p_{q,\gamma\bar{\gamma}}\tilde{u}
  \rVert_{\theta-1}
\nonumber
\\
& \qquad
  \times
  \lVert
  \langle{x}\rangle^{-2}
  \p_j
  \{
  \langle{x}\rangle^{-\lvert\alpha^{r}\rvert-2m_r}
  \p_t^{m_r}\p^{\alpha^r}\p_{r,\gamma\bar{\gamma}}\tilde{u}
  \}  
  \rVert_{\theta-1}
\nonumber
\\
& \leqslant
  (2p+1)C_0^{4p+2}
  \sum_{r=0}^{2p}
  \prod_{\substack{q=0 \\ q\ne{r}}}^{2p}
  (\lvert\alpha^{q}\rvert+2m_q)_+
  \lVert
  \langle{x}\rangle^{-\lvert\alpha^{q}\rvert-2m_q}
  \p_t^{m_q}\p^{\alpha^q}u
  \rVert_{\theta}
\nonumber
\\
& \qquad
  \times
  \Bigl\{
  \lVert
  \langle{x}\rangle^{-\lvert\alpha^{r}+e_j+e_r^\prime\rvert-2m_r}
  \p_t^{m_r}\p^{\alpha^r+e_j+e_r^\prime}u
  \rVert_{\theta-1}
\nonumber
\\
& \qquad\qquad
  +
  (\lvert\alpha^r\rvert+2m_r)_+
  \lVert
  \langle{x}\rangle^{-\lvert\alpha^{r}+e_r^\prime\rvert-2m_r}
  \p_t^{m_r}\p^{\alpha^r+e_r^\prime}u
  \rVert_{\theta-1}
  \Bigr\}
\nonumber
\\
& =
  (2p+1)C_0^{4p+2}
  \lvert{t}\rvert^{-\lvert\alpha\rvert-2m-1}
  \rho^{\lvert\alpha\rvert+1}
  \kappa^{2m}
  \prod_{q=0}^{2p}
  (\lvert\alpha^{q}\rvert+2m_q)!^s
  \sum_{r=0}^{2p}
\nonumber
\\
& \qquad
  \times
  \prod_{\substack{q=0 \\ q\ne{r}}}^{2p}
  \frac{\lvert{t}\rvert^{\lvert\alpha^q\rvert}\rho^{-\lvert\alpha^q\rvert}\kappa^{-2m_q}}{(\lvert\alpha^{q}\rvert+2m_q-2)_+!^s}
  \lVert
  \langle{x}\rangle^{-\lvert\alpha^{q}\rvert-2m_q}
  \p_t^{m_q}\p^{\alpha^q}u
  \rVert_{\theta}
\nonumber
\\
& \qquad\qquad
  \frac{\lvert{t}\rvert^{\lvert\alpha^r+e_j\rvert}\rho^{-\lvert\alpha^r+e_j\rvert}\kappa^{-2m_r}}{(\lvert\alpha^r+e_j\rvert+2m_r-2)_+!^s}
  \lVert
  \langle{x}\rangle^{-\lvert\alpha^r+e_j\rvert-2m_r}
  \p_t^{m_r}\p^{\alpha^r+e^j}u
  \rVert_{\theta}
\nonumber
\\
& +
  (2p+1)C_0^{4p+2}
  \lvert{t}\rvert^{-\lvert\alpha\rvert-2m}
  \rho^{\lvert\alpha\rvert}
  \kappa^{2m}
  \prod_{q=0}^{2p}
  (\lvert\alpha^{q}\rvert+2m_q)!^s
\nonumber
\\
& \qquad
  \times
  \prod_{q=0}^{2p}
  \frac{\lvert{t}\rvert^{\lvert\alpha^q\rvert+2m_q}\rho^{-\lvert\alpha^q\rvert}\kappa^{-2m_q}}{(\lvert\alpha^{q}\rvert+2m_q-2)_+!^s}
  \lVert
  \langle{x}\rangle^{-\lvert\alpha^{q}\rvert-2m_q}
  \p_t^{m_q}\p^{\alpha^q}u
  \rVert_{\theta},
\label{equation:75}
\end{align}
where $e_r^\prime=e_j$ with some $j=0,1,\dotsc,n$. 
Substituting 
\eqref{equation:74} and \eqref{equation:75} into \eqref{equation:76}, 
we have 
\begin{align}
  Z^l(t)  
& \leqslant
  \frac{CT^2}{\kappa^2}
  \sum_{p=1}^\infty
  \sum_{\substack{\lvert\gamma\rvert=p+1 \\ \lvert\bar{\gamma}\rvert=p}}
  \lvert{f_{\gamma\bar{\gamma}}}\rvert
  (1+C_0^2)^{2p+1}(2p+1)^2
\nonumber
\\
& \times
  \sum_{m=0}^l
  \sum_\alpha
  \sum_{\substack{m_0+\dotsb+m_{2p}=m \\
  \alpha^0+\dotsb+\alpha^{2p}=\alpha}}
  \frac{\displaystyle\prod_{q=0}^{2p}(\lvert\alpha^q\rvert+2m_q)!^s}{(\lvert\alpha\rvert+2m)!^s}
  \frac{\alpha!}{\alpha^0!\dotsm\alpha^{2p}!}
  \frac{m!}{m_0!\dotsm{m_{2p}!}}
\nonumber
\\
& \times
  \prod_{q=0}^{2p}
  \frac{\lvert{t}\rvert^{\lvert\alpha^q\rvert+2m_q}\rho^{-\lvert\alpha^q\rvert}\kappa^{-2m_q}}{(\lvert\alpha^{q}\rvert+2m_q-2)_+!^s}
  \lVert
  \langle{x}\rangle^{-\lvert\alpha^{q}\rvert-2m_q}
  \p_t^{m_q}\p^{\alpha^q}u
  \rVert_{\theta}
\nonumber
\\
& +
  \sum_{j=1}^n\frac{CT\rho}{\kappa^2}
  \sum_{p=1}^\infty
  \sum_{\substack{\lvert\gamma\rvert=p+1 \\ \lvert\bar{\gamma}\rvert=p}}
  \lvert{f_{\gamma\bar{\gamma}}}\rvert
  (1+C_0^2)^{2p+1}(2p+1)
  \sum_{r=0}^{2p}
\nonumber
\\
& \times
  \sum_{m=0}^l
  \sum_\alpha
  \sum_{\substack{m_0+\dotsb+m_{2p}=m \\
  \alpha^0+\dotsb+\alpha^{2p}=\alpha}}
  \frac{\displaystyle\prod_{q=0}^{2p}(\lvert\alpha^q\rvert+2m_q)!^s}{(\lvert\alpha\rvert+2m)!^s}
  \frac{\alpha!}{\alpha^0!\dotsm\alpha^{2p}!}
  \frac{m!}{m_0!\dotsm{m_{2p}!}}
\nonumber
\\
& \times  
  \prod_{\substack{q=0 \\ q{\ne}r}}^{2p}
  \frac{\lvert{t}\rvert^{\lvert\alpha^q\rvert+2m_q}\rho^{-\lvert\alpha^q\rvert}\kappa^{-2m_q}}{(\lvert\alpha^{q}\rvert+2m_q-2)_+!^s}
  \lVert
  \langle{x}\rangle^{-\lvert\alpha^{q}\rvert-2m_q}
  \p_t^{m_q}\p^{\alpha^q}u
  \rVert_{\theta}
\nonumber
\\
& \times
  \frac{\lvert{t}\rvert^{\lvert\alpha^r+e_j\rvert+2m_r}\rho^{-\lvert\alpha^r+e_j\rvert}\kappa^{-2m_r}}{(\lvert\alpha^r\rvert+2m_r-2)_+!^s}
  \lVert
  \langle{x}\rangle^{-\lvert\alpha^r+e_j\rvert-2m_r}
  \p_t^{m_r}\p^{\alpha^r+e_j}u
  \rVert_{\theta}.
\label{equation:77}
\end{align}
In view of Lemma~\ref{theorem:factorial}, we have 
\begin{align*}
& \frac{\displaystyle\prod_{q=0}^{2p}(\lvert\alpha^q\rvert+2m_q)!^s}{(\lvert\alpha\rvert+2m)!^s}
  \frac{\alpha!}{\alpha^0!\dotsm\alpha^{2p}!}
  \frac{m!}{m_0!\dotsm{m_{2p}!}}
\\
& \leqslant
  \frac{\displaystyle\prod_{q=0}^{2p}(\lvert\alpha^q\rvert+2m_q)!^s}{(\lvert\alpha\rvert+2m)!^s}
  \frac{\alpha!}{\alpha^0!\dotsm\alpha^{2p}!}
  \frac{m!}{m_0!\dotsm{m_{2p}!}}
\\
& \leqslant
  \frac{\displaystyle\prod_{q=0}^{2p}(\lvert\alpha^q\rvert+2m_q)!^s}{(\lvert\alpha\rvert+2m)!^s}
  \frac{\alpha!}{\alpha^0!\dotsm\alpha^{2p}!}
  \frac{(2m)!}{(2m_0)!\dotsm{(2m_{2p})!}}
\\
& =
  \frac{\displaystyle\prod_{q=0}^{2p}\lvert(\alpha^q,2m_q)\rvert!^s}{\lvert(\alpha,2m)\rvert!^s}
  \frac{(\alpha,2m)!}{\displaystyle\prod_{q=0}^{2p}(\alpha^q,2m_q)!}
  \leqslant
  1.
\end{align*}
Applying this to \eqref{equation:77}, we deduce 
\begin{align}
  Z^l(t)
& \leqslant
  \frac{CT(T+\rho)}{\kappa^2}
  \sum_{p=1}^\infty
  A_p
  (1+C_0^2)^{2p+1}(2p+1)^2
  Y^l(t)^{2p+1}
\nonumber
\\
& \leqslant
  \frac{2CT(T+\rho)}{\kappa^2}
  \sum_{p=1}^\infty
  A_p
  (1+C_0^2)^{2p+1}(2p+1)^2
  \Bigl\{
  e(1+C_0)^2M
  \Bigr\}^{2p+1}.
\nonumber
\end{align}
Set $E=C_R$ with $R=2e(1+C_0)^2M$ for short. 
Then we have
\begin{equation}
Z^l(t)
\leqslant
\frac{2CT(T+\rho)E}{\kappa^2} 
\label{equation:78}
\end{equation}
for $t\in[-T,T]\setminus\{0\}$. 
Combining \eqref{equation:nanako} and \eqref{equation:78}, we have 
$$
Y^{l+1}(t)
\leqslant
\frac{M}{2}
+
\frac{\rho^2nM+2CT(T+\rho)E}{\kappa^2}
$$
for $t\in[-T,T]\setminus\{0\}$. 
If we choose $\kappa$ satisfying 
$$
\kappa
\geqslant
\sqrt{\dfrac{2\rho^2nM+4CT(T+\rho)E}{M}}, 
$$
then $Y^{l+1}(t)\leqslant{M}$ for $t\in[-T,T]\setminus\{0\}$. 
This completes the proof.   
\end{proof}
\section{Concluding remarks}
\label{section:remark}
Finally we state some remarks concerned with the results of \cite{hk}. 
We shall present some examples for which the Gevrey estimate  
\eqref{equation:gevrey} holds for $s\geqslant1/2$. 
First we remark that $e^{it\Delta}u_0$ gains analyticity in space-time
variables if $u_0$ decays faster than the Gaussian functions. 
\begin{theorem}
\label{theorem:free}
Let $s\geqslant1/2$ and $\theta\in\mathbb{R}$. 
Suppose that 
$\exp(\ep\langle{x}\rangle^{1/s})u_0{\in}H^\theta$ with some $\ep>0$. 
Then, For any $T>0$ there exist positive constants $M$ and $\rho$ such that 
for $t\in[-T,T]\setminus\{0\}$ 
$$
\lVert
\langle{x}\rangle^{-\lvert\alpha\rvert-2m}
\p_t^m\p^{\alpha}e^{it\Delta}u_0
\rVert_\theta
\leqslant
M\rho^{\lvert\alpha\rvert+2m}
t^{-\lvert\alpha\rvert-2m}
m!^{2s}\alpha!^s.
$$
\end{theorem}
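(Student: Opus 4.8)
The plan is to deduce Theorem~\ref{theorem:free} directly from Lemma~\ref{theorem:decay} and Lemma~\ref{theorem:hermite}, the only new ingredient being the well-known intertwining of $J$ with the free propagator. Since $e^{-it\Delta}\p_j e^{it\Delta}=\p_j$ and $\frac{d}{dt}\bigl(e^{-it\Delta}x_j e^{it\Delta}\bigr)=e^{-it\Delta}(-2i\p_j)e^{it\Delta}=-2i\p_j$, we get $e^{-it\Delta}x_j e^{it\Delta}=x_j-2it\p_j$, hence $e^{-it\Delta}J_j e^{it\Delta}=x_j$ and therefore
\begin{equation*}
J^\alpha e^{it\Delta}u_0=e^{it\Delta}(x^\alpha u_0)
\end{equation*}
for every multi-index $\alpha$. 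Because $e^{it\Delta}$ commutes with $\langle D\rangle^\theta$ and is unitary on $L^2$, this gives $\lVert J^\alpha e^{it\Delta}u_0\rVert_\theta=\lVert x^\alpha u_0\rVert_\theta$, independent of $t$.

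First I would invoke Lemma~\ref{theorem:decay}: from $\exp(\ep\langle x\rangle^{1/s})u_0\in H^\theta$ we obtain $q>0$ with $\lVert x^\alpha u_0\rVert_\theta\leqslant\lVert\exp(\ep\langle x\rangle^{1/s})u_0\rVert_\theta\,q^{\lvert\alpha\rvert+1}\alpha!^s$. Choosing $r=1/(2q)$ and summing the resulting geometric series over multi-indices yields
\begin{equation*}
\sum_\alpha\frac{r^{\lvert\alpha\rvert}}{\alpha!^s}\lVert J^\alpha e^{it\Delta}u_0\rVert_\theta\leqslant M_0
\end{equation*}
with $M_0$ depending only on $n$, $s$, $q$ and $\lVert\exp(\ep\langle x\rangle^{1/s})u_0\rVert_\theta$, in particular independent of $t$ and of $T$. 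Since $s\geqslant1/2$, Lemma~\ref{theorem:hermite} applies with $u(t)=e^{it\Delta}u_0$ and produces $\rho_1,M_1>0$ such that, reading off a single term of the series in its conclusion,
\begin{equation*}
\lVert\langle x\rangle^{-\lvert\beta\rvert}\p^\beta e^{it\Delta}u_0\rVert_\theta\leqslant M_1\,\lvert\beta\rvert!^s\Bigl(\frac{\rho_1}{\lvert t\rvert}\Bigr)^{\lvert\beta\rvert}
\end{equation*}
for every $\beta$ and every $t\in[-T,T]\setminus\{0\}$.

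Next I would remove the time derivatives. Writing $v=e^{it\Delta}u_0$ and using $\p_t v=i\Delta v$, we have $\p_t^m\p^\alpha v=i^m\Delta^m\p^\alpha v=i^m\sum_{\lvert\gamma\rvert=m}\tfrac{m!}{\gamma!}\p^{\alpha+2\gamma}v$, and since $\lvert\alpha+2\gamma\rvert=\lvert\alpha\rvert+2m$ the weight $\langle x\rangle^{-\lvert\alpha\rvert-2m}$ coincides with $\langle x\rangle^{-\lvert\alpha+2\gamma\rvert}$. Combining with the previous display for $\beta=\alpha+2\gamma$ and with $\sum_{\lvert\gamma\rvert=m}m!/\gamma!=n^m$ gives
\begin{equation*}
\lVert\langle x\rangle^{-\lvert\alpha\rvert-2m}\p_t^m\p^\alpha v\rVert_\theta\leqslant M_1\,n^m\,(\lvert\alpha\rvert+2m)!^s\Bigl(\frac{\rho_1}{\lvert t\rvert}\Bigr)^{\lvert\alpha\rvert+2m}.
\end{equation*}

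Finally, elementary factorial bookkeeping finishes the proof: using $(\lvert\alpha\rvert+2m)!\leqslant 2^{\lvert\alpha\rvert+2m}\lvert\alpha\rvert!\,(2m)!\leqslant 2^{\lvert\alpha\rvert+4m}\lvert\alpha\rvert!\,m!^2$, then $\lvert\alpha\rvert!\leqslant n^{\lvert\alpha\rvert}\alpha!$ and $n^m\leqslant(\sqrt n)^{\lvert\alpha\rvert+2m}$, all of the extra factors $n^m$, $2^{s(\lvert\alpha\rvert+4m)}$, $n^{s\lvert\alpha\rvert}$ are at most exponential in $\lvert\alpha\rvert+2m$, hence absorbable into one constant: with $\rho=\rho_1\cdot 2^{2s}n^s\sqrt n$ one obtains
\begin{equation*}
\lVert\langle x\rangle^{-\lvert\alpha\rvert-2m}\p_t^m\p^\alpha e^{it\Delta}u_0\rVert_\theta\leqslant M_1\,\rho^{\lvert\alpha\rvert+2m}\lvert t\rvert^{-\lvert\alpha\rvert-2m}m!^{2s}\alpha!^s,
\end{equation*}
which is the assertion with $M=M_1$ (the $t^{-\lvert\alpha\rvert-2m}$ in the statement being understood as $\lvert t\rvert^{-\lvert\alpha\rvert-2m}$). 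I do not expect a genuine obstacle here: the content is entirely carried by Lemma~\ref{theorem:decay} and Lemma~\ref{theorem:hermite}, and the only point demanding a little attention is checking, in this last step, that the combinatorial factors coming from $\Delta^m$ and from converting $\lvert\alpha\rvert!\,m!^2$ into $\alpha!^s m!^{2s}$ can all be swallowed by a single geometric factor while keeping $M$ and $\rho$ uniform in $\alpha$, $m$ and $t$.
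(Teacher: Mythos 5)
Your proposal is correct and follows the paper's own proof essentially step for step: intertwining $J^\alpha$ with $e^{it\Delta}$ so that Lemma~\ref{theorem:decay} gives the hypothesis of Lemma~\ref{theorem:hermite}, and then trading $\p_t^m$ for $\Delta^m$ via the free equation. If anything you are slightly more complete than the paper, which states the intertwining without derivation and stops its final display at $(\lvert\alpha\rvert+2m)!^s$, leaving the conversion to $m!^{2s}\alpha!^s$ implicit, whereas you carry out that factorial bookkeeping explicitly.
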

\begin{proof}
Fix arbitrary $T>0$. 
Lemma~\ref{theorem:decay} shows that 
$$
\lVert{x^\alpha}u_0\rVert_\theta
\leqslant
M_0\rho_0^{\lvert\alpha\rvert}
\alpha!^s
$$
with some $M_0>0$ and $\rho_0>0$. 
Applying $J^\alpha$ to $(\p_t-i\Delta)e^{it\Delta}u_0=0$, we have 
$(\p_t-i\Delta)J^{\alpha}u=0$. 
It is easy to see that  
$$
\lVert{J^\alpha}e^{it\Delta}u_0\rVert_\theta
=
\lVert{x^\alpha}u_0\rVert_\theta
\leqslant
M_0\rho_0^{\lvert\alpha\rvert}
\alpha!^s. 
$$
Lemma~\ref{theorem:hermite} shows that for $t\in[-T,T]\setminus\{0\}$ 
$$
\lVert
\langle{x}\rangle^{-\lvert\alpha\rvert}
\p^{\alpha}e^{it\Delta}u_0
\rVert_\theta
\leqslant
M_1\rho_1^{\lvert\alpha\rvert}
\lvert{t}\rvert^{-\lvert\alpha\rvert}
\alpha!^s
$$
with some $M_1>0$ and $\rho_1>0$. 
Using the equation $(\p_t-i\Delta)e^{it\Delta}u_0=0$ again, we deduce 
\begin{align*}
& \lVert
  \langle{x}\rangle^{-\lvert\alpha\rvert-2m}
  \p_t^m\p^{\alpha}e^{it\Delta}u_0
  \rVert_\theta
\\
& =
  \lVert
  \langle{x}\rangle^{-\lvert\alpha\rvert-2m}
  \Delta^m\p^{\alpha}e^{it\Delta}u_0
  \rVert_\theta
\\
& \leqslant
  \sum_{j(1)=1}^n
  \dotsm
  \sum_{j(m)=1}^n
  \lVert
  \langle{x}\rangle^{-\lvert\alpha\rvert-2m}
  \p^{\alpha+2(e_{j(1)}+\dotsb+e_{j(m)})}e^{it\Delta}u_0
  \rVert_\theta
\\
& \leqslant
   n^mM_1\rho_1^{\lvert\alpha\rvert+2m}
   t^{-\lvert\alpha\rvert-2m}
   (\lvert\alpha\rvert+2m)!^s  
\end{align*}
for $t\in[-T,T]\setminus\{0\}$. 
This completes the proof. 
\end{proof}
Next we apply Theorem~\ref{theorem:free} to 
the initial value problem for 
one-dimensional nonlinear equations of the form 
\begin{alignat}{2}
  u_t-iu_{xx}
& =
  2a(\lvert{u}\rvert^2)_xu
  +
  ia^2\lvert{u}\rvert^4u
& 
  \quad\text{in}\quad
& \mathbb{R}^2,
\label{equation:special}
\\
  u(0,x)
& =
  u_0(x)
&
  \quad\text{in}\quad
& \mathbb{R},
\label{equation:speciald}
\end{alignat}
where $u_t=\p{u}/\p{t}$, $u_{xx}=\p^2{u}\p/{x^2}$, 
and $a$ is a real constant. 
The equation \eqref{equation:special} has very special nonlinearity. 
In fact, if $u$ is a smooth solutions to \eqref{equation:special}, 
then 
\begin{equation}
v(t,x)
=
\exp\left(-ia\int_{-\infty}^x\lvert{u(t,y)}\rvert^2dy\right)
u(t,x)
\label{equation:gauge0}
\end{equation}
formally solves the equation $v_t-iv_{xx}=0$. 
The mapping  
\begin{equation}
u 
\longmapsto 
v(x)
=
\exp\left(-ia\int_{-\infty}^x\lvert{u(y)}\rvert^2dy\right)
u(x)
\label{equation:gauge1} 
\end{equation}
defined for functions of $x$ is said to be a gauge transform. 
We remark that 
$\lvert{u(x)}\rvert=\lvert{v(x)}\rvert$ 
and 
$\lVert{u}\rVert=\lVert{v}\rVert$ for $u{\in}L^2(\mathbb{R})$, 
and the inverse of the gauge transform is given by 
\begin{equation}
u(x)
=
\exp\left(ia\int_{-\infty}^x\lvert{v(y)}\rvert^2dy\right)
v(x).  
\label{equation:gauge2}
\end{equation}
More properties of the gauge transform needed in this section are the following. 
\begin{lemma}
\label{theorem:gauge-transform}
Let $\theta>1/2$ and $s>0$. 
\begin{description}
\item[{\rm (i)}\ ] 
The gauge transform is a homeomorphic mapping of $H^\theta(\mathbb{R})$ onto itself. 
\item[{\rm (ii)}\ ] 
If $u{\in}H^\theta(\mathbb{R})$ satisfies 
$\exp(\ep\langle{x}\rangle^{1/s})u{\in}H^\theta(\mathbb{R})$, 
the gauge transformation of $u$ has the same property. 
\item[{\rm (iii)}\ ] 
If $u{\in}C(\mathbb{R};H^1(\mathbb{R}))$ solves {\rm \eqref{equation:special}}, 
then $v(t,x)$ defined by \eqref{equation:gauge0} solves $v_t-iv_{xx}=0$. 
\item[{\rm (iv)}\ ] 
If the sequence $\{u_n\}_{n=1}^\infty{\subset}H^\theta(\mathbb{R})$ satisfies 
$$
u_n \longrightarrow u
\quad\text{in}\quad
H^\theta(\mathbb{R}) 
\quad\text{as}\quad
n \rightarrow \infty,
$$
then 
$$
(\lvert{u_n}\rvert^2)_xu_n \longrightarrow (\lvert{u}\rvert^2)_xu
\quad\text{in}\quad
\mathscr{D}^\prime(\mathbb{R}) 
\quad\text{as}\quad
n \rightarrow \infty,  
$$
where $\mathscr{D}^\prime(\mathbb{R})$ 
is the space of distributions on $\mathbb{R}$. 
\end{description}
\end{lemma}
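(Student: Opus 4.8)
The plan is to handle the four assertions in sequence; (i)--(iii) are soft consequences of the algebra property of $H^\theta(\mathbb{R})$ for $\theta>1/2$ together with the unimodularity of the gauge factor, and (iv) is a routine weak-convergence argument. Throughout, write $\Phi(x)=\int_{-\infty}^x\lvert u(y)\rvert^2dy$, so that the gauge transform is $v=e^{-ia\Phi}u$. For (i), fix $u\in H^\theta(\mathbb{R})$. Since $\theta>1/2$, Sobolev embedding gives $u\in L^\infty\cap L^2$, whence $\lvert u\rvert^2=u\bar u\in L^1\cap H^\theta$ because $H^\theta$ is a Banach algebra; consequently $\Phi$ is bounded, continuous, nondecreasing and $\Phi'=\lvert u\rvert^2\in H^\theta$. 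It then suffices to prove that multiplication by $e^{-ia\Phi}$ is a homeomorphism of $H^\theta$ onto itself. For $0<\theta\leqslant1$ I would use the Gagliardo characterization of $H^\theta$: splitting $v(x)-v(y)=e^{-ia\Phi(x)}(u(x)-u(y))+(e^{-ia\Phi(x)}-e^{-ia\Phi(y)})u(y)$ and using $\lvert e^{-ia\Phi(x)}-e^{-ia\Phi(y)}\rvert\leqslant\min\{\lvert a\rvert\,\lVert u\rVert_{L^\infty}^2\lvert x-y\rvert,\,2\}$, one bounds the Gagliardo seminorm of $v$ by that of $u$ plus convergent integrals of $\lvert x-y\rvert^{1-2\theta}$ (near the diagonal, where $\theta<1$ is needed) and of $\lvert x-y\rvert^{-1-2\theta}$ (away from it, where $\theta>0$ suffices). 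For $\theta>1$ one instead observes that $\lvert u\rvert^2\in H^\theta$ is then $C^{[\theta-1/2]}$ with bounded derivatives, so $e^{-ia\Phi}(x)$ is a symbol in $\mathscr{B}^\sigma S^0$ for $\sigma$ as large as wanted and Lemma~\ref{theorem:algebra} applies; bijectivity and continuity of the inverse follow from the explicit formula \eqref{equation:gauge2} together with the Lipschitz dependence of $\Phi$ on $u$ through $\lVert\,\lvert u\rvert^2-\lvert v\rvert^2\rVert$-type estimates.

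For (ii) there is almost nothing to do: since $\lvert v(x)\rvert=\lvert u(x)\rvert$ pointwise, $\exp(\ep\langle x\rangle^{1/s})v=e^{-ia\Phi(x)}\bigl(\exp(\ep\langle x\rangle^{1/s})u\bigr)$, and the hypothesis $\exp(\ep\langle x\rangle^{1/s})u\in H^\theta$ combined with the $H^\theta$-multiplier bound for $e^{-ia\Phi}$ established in (i) gives $\exp(\ep\langle x\rangle^{1/s})v\in H^\theta$. For (iii), with $u\in C(\mathbb{R};H^1)$ the nonlinear terms $2a(\lvert u\rvert^2)_xu$ and $ia^2\lvert u\rvert^4u$ lie in $C(\mathbb{R};L^2)$ (using $H^1(\mathbb{R})$ is an algebra and embeds in $L^\infty$), so $u\in C^1(\mathbb{R};H^{-1})$ and, once \eqref{equation:special} is inserted, $\partial_t\lvert u\rvert^2=2\re(u_t\bar u)=-2\p_x\im(u_x\bar u)+2a\p_x(\lvert u\rvert^4)$ is a genuine total $x$-derivative, so $\Phi_t=-2\im(u_x\bar u)+2a\lvert u\rvert^4$ is a bona fide function (the boundary terms at $-\infty$ vanish because $u(t,\cdot)\in H^1\subset C_0$). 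All subsequent manipulations are then justified in $\mathscr{D}'$. The computation itself is mechanical: using $\Phi_x=\lvert u\rvert^2$ one gets $v_x=e^{-ia\Phi}(u_x-ia\lvert u\rvert^2u)$, $v_{xx}=e^{-ia\Phi}(u_{xx}-ia(\lvert u\rvert^2)_xu-2ia\lvert u\rvert^2u_x-a^2\lvert u\rvert^4u)$ and $v_t=e^{-ia\Phi}(u_t-ia\Phi_tu)$; substituting \eqref{equation:special} and the expression for $\Phi_t$, and using $(\lvert u\rvert^2)_x=2\re(u_x\bar u)$ together with $2\re(u_x\bar u)u+2i\im(u_x\bar u)u=2\lvert u\rvert^2u_x$, one finds that all cubic and quintic contributions cancel and $v_t-iv_{xx}=0$.

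For (iv), since $\theta>1/2$ the algebra property gives $\lvert u_n\rvert^2\to\lvert u\rvert^2$ in $H^\theta$, hence $(\lvert u_n\rvert^2)_x\to(\lvert u\rvert^2)_x$ in $H^{\theta-1}$, while $u_n\to u$ in $H^\theta\subset H^{1-\theta}$ (legitimate because $\theta\geqslant1-\theta$). Writing $(\lvert u_n\rvert^2)_xu_n-(\lvert u\rvert^2)_xu=\bigl((\lvert u_n\rvert^2)_x-(\lvert u\rvert^2)_x\bigr)u_n+(\lvert u\rvert^2)_x(u_n-u)$ and pairing with $\varphi\in\mathscr{D}(\mathbb{R})$, each term becomes a pairing of the form $\langle f_n-f,\,g_n\varphi\rangle$ with $f_n\to f$ in $H^{\theta-1}=(H^{1-\theta})^\ast$ and $g_n\varphi$ convergent (hence bounded) in $H^{1-\theta}$, so it tends to $0$; this gives the claimed $\mathscr{D}'$-convergence.

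The main obstacle is the bookkeeping in (i) for $\theta>1$: $e^{-ia\Phi}$ is not itself an element of $H^\theta$, so one cannot simply invoke the algebra property, and the Gagliardo-seminorm argument only reaches $\theta\leqslant1$; for larger $\theta$ one must route the multiplier estimate (and its continuity in $u$, needed for the homeomorphism) through the pseudodifferential calculus of Section~\ref{section:psdo}, or through an induction on the integer part of $\theta$. The computation in (iii) is long but poses no conceptual difficulty, and (ii) and (iv) are immediate given (i).
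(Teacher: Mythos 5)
Your proposal is correct, and for parts (ii)--(iv) it essentially coincides with the paper's proof: (ii) is the same observation that the exponential weight commutes with the unimodular gauge factor; (iii) is the same direct computation, justified in the same way (the paper notes $u_t\in C(\mathbb{R};H^{-1})$, $\phi\in C(\mathbb{R};\mathscr{B}^{3/2})$ before manipulating); and (iv) is the same duality pairing, the paper fixing the pair $H^{-1/2}\times H^{1/2}$ where you use $H^{\theta-1}\times H^{1-\theta}$. The genuine difference is in (i). The paper runs a single argument for all $\theta>1/2$: it expands $(d/dx)^j(e^{-ia\phi}u)$ by the Leibniz rule, observes that $e^{ia\phi}(d/dx)^k e^{-ia\phi}$ is a polynomial in derivatives of $\lvert u\rvert^2$, controls the resulting products by \eqref{equation:chain}, and disposes of the fractional part $\langle D\rangle^{\theta-[\theta]}$ via Lemma~\ref{theorem:algebra}. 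You instead split into $\theta\leqslant1$ (Gagliardo seminorms) and $\theta>1$ (pseudodifferential calculus); the low-regularity branch is more elementary and self-contained, the paper's is uniform. Two points to tighten in your version: the near-diagonal integral of $\lvert x-y\rvert^{1-2\theta}$ diverges exactly at $\theta=1$, so that endpoint must be handled by the direct $H^1$ computation $v'=e^{-ia\Phi}(u'-ia\lvert u\rvert^2u)$ rather than by the seminorm estimate; and $e^{-ia\Phi}$ is not a symbol in $\mathscr{B}^\sigma S^0$ ``for $\sigma$ as large as wanted'' --- its regularity is capped at $\sigma=\theta+1/2$ by that of $\Phi'=\lvert u\rvert^2$ --- although $\sigma>\theta$ is all the multiplier bound requires, so the argument goes through.
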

\begin{proof}
Set 
$$
\phi(x)
=
\int_{-\infty}^x\lvert{u(y)}\rvert^2dy
$$
for short. 
First we show (i). 
Suppose that $u{\in}H^\theta(\mathbb{R})$ with some $\theta>1/2$. 
We can check 
$\phi{\in}\mathscr{B}^{\theta+1/2}(\mathbb{R})$ 
and 
$\phi^\prime{\in}H^\theta$ 
since 
$\phi^\prime(x)=\lvert{u(x)}\rvert^2$ 
and 
$H^\theta(\mathbb{R})$ is an algebra for $\theta>1/2$. 
For any integer $j=0,1,2,\dotsb,[\theta]$, the chain rule shows that 
\begin{align}
  \left(\frac{d}{dx}\right)^j(e^{-ia\phi}u)
& =
  \sum_{k=0}^j
  \frac{j!}{k!(j-k)!}
  \left(\frac{d}{dx}\right)^ke^{-ia\phi}
  \left(\frac{d}{dx}\right)^{j-k}u
\nonumber
\\
& =
  \sum_{k=0}^j
  \frac{j!}{k!(j-k)!}
  e^{-ia\phi}
  \left\{
  e^{ia\phi}
  \left(\frac{d}{dx}\right)^ke^{-ia\phi}
  \right\}
  \left(\frac{d}{dx}\right)^{j-k}u.
\label{equation:80}
\end{align}
In view of \eqref{equation:chain}, we deduce 
\begin{equation}
e^{-ia\phi}
\left\{
e^{ia\phi}
\left(\frac{d}{dx}\right)^ke^{-ia\phi}\right\}
\left(\frac{d}{dx}\right)^{j-k}u
\in
H^{\theta-j}(\mathbb{R}). 
\label{equation:81}
\end{equation}
Lemma~\ref{theorem:algebra} shows that 
$$
\langle{D}\rangle^{\theta-[\theta]}
\left(\frac{d}{dx}\right)^j
(e^{-ia\phi}u)
\in
H^{[\theta]-j}(\mathbb{R})
\quad\text{for}\quad
j=0,1,\dotsc,[\theta]. 
$$
This asserts that 
$u{\in}H^\theta(\mathbb{R}){\mapsto}e^{-ia\phi}u{\in}H^\theta(\mathbb{R})$ 
is continuous. 
In the same way, the inverse \eqref{equation:gauge2} is also continuous. 
Hence the gauge transform \eqref{equation:gauge1} 
is homeomorphic on $H^\theta(\mathbb{R})$. 
\par
Next we show (ii). 
Replacing $u$ by $\exp(\ep\langle{x}\rangle^{1/s})u$ in \eqref{equation:80}, 
we have 
\begin{align*}
& \left(\frac{d}{dx}\right)^j
  \Bigl(\exp(\ep\langle{x}\rangle^{1/s})e^{-ia\phi}u\Bigr)
\\
& =
  \sum_{k=0}^j
  \frac{j!}{k!(j-k)!}
  e^{-ia\phi} 
  \left\{
  e^{ia\phi}
  \left(\frac{d}{dx}\right)^ke^{-ia\phi}
  \right\}
  \left(\frac{d}{dx}\right)^{j-k}\exp(\ep\langle{x}\rangle^{1/s})u.
\end{align*}
In the same way, we can check 
$\exp(\ep\langle{x}\rangle^{1/s})e^{-ia\phi}u{\in}H^\theta(\mathbb{R})$. 
\par
Next we show (iii). 
Suppose that $u{\in}C(\mathbb{R};H^1(\mathbb{R}))$ solves \eqref{equation:special}.  
It follows that 
$\p_tu{\in}C(\mathbb{R};H^{-1}(\mathbb{R}))$, 
$\phi{\in}C(\mathbb{R};\mathscr{B}^{3/2}(\mathbb{R}))$, 
$\phi_x{\in}C(\mathbb{R};H^1(\mathbb{R}))$ 
and 
$v=e^{-ia\phi}u$ 
also belongs to $C(\mathbb{R};H^1(\mathbb{R}))$. 
Thus, the following computations 
\begin{align*}
  e^{-ia\phi}u_t
& =
  v_t+iav\phi_t
\\
& =
  v_t
  +
  iav
  \int_{-\infty}^x
  (u_t\bar{u}+u\bar{u}_t
   +
   4a(\lvert{u}\rvert^2)_x\lvert{u}\rvert^2
   +
   ia^2\lvert{u}\rvert^6
   -
   ia^2\lvert{u}\rvert^r)
  dy
\\
& =
  v_t
  +
  iav
  \int_{-\infty}^x
  (iu_{xx}\bar{u}-iu\bar{u}_{xx}
   +2a(\lvert{u}\rvert^4)_x)
  dy
\\
& =
  v_t
  -
  a(u_x\bar{u}-u\bar{u}_x)v
  +
  2ia^2\lvert{u}\rvert^4v, 
\\
  v_x
& =
  -ia\lvert{u}\rvert^2v
  +
  e^{-ia\phi}u_x,
\\
  -ie^{-ia\phi}u_{xx}
& =
  -iv_{xx}
  +
  2a\lvert{u}\rvert^2v_x
  +
  a(\lvert{u}\rvert^2)_x
  +
  ia^2\lvert{u}\rvert^4v
\\
& =
  -iv_{xx}
  +
  a(\lvert{u}\rvert^2)_x
  +
  2au_x\bar{u}v
  -
  ia\lvert{u}\rvert^4v,   
\end{align*}
are justified, and it is easy to see that $v$ solves $v_t-iv_{xx}=0$. 
\par
Lastly, we check (iv). 
Fix arbitrary $\psi\in\mathscr{D}(\mathbb{R})$. 
Since $\theta>1/2$, one can easily verify 
$$
(\lvert{u_n}\rvert^2)_x
\rightarrow
(\lvert{u}\rvert^2)_x
\quad\text{in}\quad
H^{-1/2}(\mathbb{R}), 
\qquad
u_n\psi
\rightarrow
u\psi
\quad\text{in}\quad
H^{1/2}(\mathbb{R}). 
$$
This shows 
$$
(\lvert{u_n}\rvert^2)_xu_n \longrightarrow (\lvert{u}\rvert^2)_xu
\quad\text{in}\quad
\mathscr{D}^\prime(\mathbb{R}) 
\quad\text{as}\quad
n \rightarrow \infty.   
$$
This completes the proof. 
\end{proof}
Theorem~\ref{theorem:free} and Lemma~\ref{theorem:gauge-transform} 
prove the following. 
\begin{theorem}
\label{theorem:special}
Let 
$\theta\geqslant1$, 
$s\geqslant1/2$ 
and 
$\ep>0$. 
Set $\sigma=\max\{1,s\}$. 
\begin{description}
\item[{\rm Existence}\ ] 
Suppose that $u_0{\in}H^\theta(\mathbb{R})$. 
Then, the initial value problem 
\eqref{equation:special}-\eqref{equation:speciald} 
posseses a unique solution $u{\in}C(\mathbb{R};H^\theta(\mathbb{R}))$. 
\item[{\rm Analyticity}\ ] 
Moreover, 
if $\exp(\ep\langle{x}\rangle^{1/s})u_0{\in}H^\theta(\mathbb{R})$, 
then for any $T>0$ there exist positive constants $M$ and $\rho$ such that 
for $t\in[-T,T]\setminus\{0\}$
$$
\lVert
\langle{x}\rangle^{-\alpha-2m}
\p_t^m\p^\alpha
u(t)
\rVert_\theta
\leqslant
M\rho^{\alpha+2m}\lvert{t}\rvert^{-\alpha-2m}
m!^{2s}\alpha!^\sigma.
$$
\end{description}
\end{theorem}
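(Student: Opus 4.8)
The plan is to exploit the fact, recorded in Lemma~\ref{theorem:gauge-transform}, that \eqref{equation:special} is gauge-equivalent to the free Schr\"odinger equation $v_t-iv_{xx}=0$, and to obtain both assertions by transporting, through the gauge transform, the elementary properties of the free evolution together with Theorem~\ref{theorem:free}.

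\emph{Existence and uniqueness.} Given $u_0\in H^\theta(\mathbb{R})$ with $\theta\geqslant1$, I would first let $v_0$ be the gauge transform \eqref{equation:gauge1} of $u_0$; then $v_0\in H^\theta(\mathbb{R})$ by Lemma~\ref{theorem:gauge-transform}~(i). The free group produces the unique solution $v(t)=e^{it\Delta}v_0\in C(\mathbb{R};H^\theta(\mathbb{R}))$ of $v_t-iv_{xx}=0$ with $v(0)=v_0$, and I define $u(t)$ to be the inverse gauge transform \eqref{equation:gauge2} of $v(t)$, so that $u\in C(\mathbb{R};H^\theta(\mathbb{R}))$, again by Lemma~\ref{theorem:gauge-transform}~(i). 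That this $u$ solves \eqref{equation:special}--\eqref{equation:speciald} follows by running the computation in the proof of Lemma~\ref{theorem:gauge-transform}~(iii) in reverse; at the borderline $\theta=1$ the cubic term $(\lvert u\rvert^2)_xu$ is only controlled in $\mathscr{D}^\prime(\mathbb{R})$, and I would justify the identity by approximating $u_0$ in $H^\theta(\mathbb{R})$ by smooth data and invoking the continuity statement of Lemma~\ref{theorem:gauge-transform}~(iv). Uniqueness is then immediate: two solutions in $C(\mathbb{R};H^1(\mathbb{R}))$ with the same datum have, by Lemma~\ref{theorem:gauge-transform}~(iii), the same gauge transform, which is forced to equal $e^{it\Delta}v_0$, and the gauge map is injective.

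\emph{Analyticity.} Suppose $\exp(\ep\langle x\rangle^{1/s})u_0\in H^\theta(\mathbb{R})$. By Lemma~\ref{theorem:gauge-transform}~(ii) the gauge-transformed datum satisfies $\exp(\ep\langle x\rangle^{1/s})v_0\in H^\theta(\mathbb{R})$, so Theorem~\ref{theorem:free} applies to $v=e^{it\Delta}v_0$: for each $T>0$ there are $M,\rho>0$ with
$$
\lVert\langle x\rangle^{-\alpha-2m}\p_t^m\p^\alpha v(t)\rVert_\theta
\leqslant
M\rho^{\alpha+2m}\lvert t\rvert^{-\alpha-2m}m!^{2s}\alpha!^s
$$
for $t\in[-T,T]\setminus\{0\}$, and the same bound for $\bar v$. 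I would then transfer this estimate to $u=e^{ia\psi}v$, where $\psi(t,x)=\int_{-\infty}^x\lvert v(t,y)\rvert^2dy$. The structural point is that $\psi_x=\lvert v\rvert^2$ and, using $v_t=iv_{xx}$ together with conservation of $\lVert v(t)\rVert$ to fix the constant of integration, $\psi_t=i(v_x\bar v-v\bar v_x)$; hence for $(m,\alpha)\ne(0,0)$ the derivative $\p_t^m\p^\alpha\psi$ is a finite sum of products $(\p_t^{m_1}\p^{\alpha_1}\tilde v)(\p_t^{m_2}\p^{\alpha_2}\tilde v)$, $\tilde v\in\{v,\bar v\}$, with $(\alpha_1+2m_1)+(\alpha_2+2m_2)=\alpha+2m-1$. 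Combining this with the above bound, the algebra property of $H^\theta(\mathbb{R})$, Lemma~\ref{theorem:weight} (to distribute the $\langle x\rangle$-weights) and Lemma~\ref{theorem:factorial} (to recombine the factorials) gives
$$
\lVert\langle x\rangle^{-(\alpha+2m-1)_+}\p_t^m\p^\alpha\psi(t)\rVert_\theta
\leqslant
M_\psi\rho_\psi^{\alpha+2m}\lvert t\rvert^{-(\alpha+2m-1)_+}m!^{2s}\alpha!^s,
$$
while $\lVert\psi(t)\rVert_{L^\infty}\leqslant\lVert v_0\rVert^2$. Feeding this into the Fa\`a di Bruno expansion of $\p_t^m\p^\alpha(e^{ia\psi})$ --- the function $e^{iaz}$ being entire --- and summing the Bell-polynomial terms with the factorial bookkeeping of Lemma~\ref{theorem:leibniz} and Lemma~\ref{theorem:factorial}, the spatial Gevrey index $s$ is raised to $\max\{1,s\}=\sigma$ (the familiar loss incurred when the Gevrey index of the inner function is less than $1$), whereas the time index $2s$ is left unchanged, since $2s\geqslant1$; moreover multiplication by the prefactor $e^{ia\psi}$ is bounded on $H^\theta(\mathbb{R})$ uniformly for $t\in[-T,T]$, exactly as in the proof of Lemma~\ref{theorem:gauge-transform}~(i). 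The product rule applied to $u=e^{ia\psi}\cdot v$ then yields
$$
\lVert\langle x\rangle^{-\alpha-2m}\p_t^m\p^\alpha u(t)\rVert_\theta
\leqslant
M\rho^{\alpha+2m}\lvert t\rvert^{-\alpha-2m}m!^{2s}\alpha!^\sigma
$$
for $t\in[-T,T]\setminus\{0\}$, which is the assertion.

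The step I expect to be the main obstacle is precisely this transfer through the gauge factor: carrying out the Fa\`a di Bruno estimate for $e^{ia\psi}$ in the anisotropic, weighted Gevrey scale, keeping the $\langle x\rangle^{-\alpha-2m}$ weights and the $\lvert t\rvert^{-\alpha-2m}$ singularity matched to the factorials through all the Bell-polynomial summations, and verifying carefully that the combinatorial cost of those summations falls only on the spatial index and not on the time index. The remaining delicacy --- the limited smoothness of $e^{ia\psi}$ when $\theta$ is close to $1$, where the Leibniz estimates of Lemma~\ref{theorem:leibniz} must be applied with care --- is handled by the Kato--Ponce commutator estimate (Theorem~\ref{theorem:kato-ponce}) and Lemma~\ref{theorem:leibniz} in the same way as in the proof of Lemma~\ref{theorem:gauge-transform}~(i).
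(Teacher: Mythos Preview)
Your proposal is correct and follows essentially the same route as the paper: gauge-transform $u_0$ to $v_0$, evolve by the free group, gauge back, and for analyticity feed Theorem~\ref{theorem:free} through the gauge factor via the identities $\psi_x=\lvert v\rvert^2$, $\psi_t=i(v_x\bar v-v\bar v_x)$, then estimate $\p_t^m\p^\alpha(e^{ia\psi}v)$. The only cosmetic difference is that the paper bypasses Fa\`a di Bruno and Bell polynomials by writing $e^{ia\psi}=\sum_k (ia)^k\psi^k/k!$ and applying Leibniz to each $\psi^k v$; this gives plain multinomial sums, and the passage from $s$ to $\sigma=\max\{1,s\}$ appears simply as the requirement that the exponent $1-\sigma$ on $\alpha!/(\alpha_0!\dotsm\alpha_k!)$ be nonpositive, which is a little cleaner than tracking Bell-polynomial combinatorics.
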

\begin{proof}[Proof of Theorem~\ref{theorem:special}, Existence]
Set 
$v=e^{it\p^2}(e^{-ia\phi_0}u_0)$, 
$u=e^{ia\phi}v$, 
$$
\phi_0(x)
=
\int_{-\infty}^x
\lvert{u_0(y)}\rvert^2
dy,
\quad
\phi(t,x)
=
\int_{-\infty}^x
\lvert{v(t,y)}\rvert^2
dy. 
$$
Here we remark that 
$\lvert{u(t,x)}\rvert=\lvert{v(t,x)}\rvert$, 
$\lVert{u(t)}\rVert=\lVert{v(t)}\rVert$. 
Lemma~\ref{theorem:gauge-transform} shows that 
$v\mapsto{u}$ is homeomorphic on $C(\mathbb{R};H^\theta(\mathbb{R}))$. 
Pick up a sequence 
$\{v_0^{(n)}\}_{n=1}^\infty\subset\mathscr{S}(\mathbb{R})$ 
satisfying 
$$
\lVert
v_0^{(n)}
-
e^{-ia\phi_0}u_0
\rVert_\theta
\longrightarrow
0
\quad\text{as}\quad
n\rightarrow\infty.
$$
Set 
$v^{(n)}=e^{iy\p^2}v_0^{(n)}$, 
$u^{(n)}=e^{ia\phi^{(n)}}v^{(n)}$, 
$$
\phi^{(n)}(t,x)
=
\int_{-\infty}^x
\lvert{v^{(n)}(t,y)}\rvert^2
dy,
\quad
\phi_0^{(n)}(x)
=
\int_{-\infty}^x
\lvert{v_0^{(n)}(y)}\rvert^2
dy.
$$
Since $v^{(n)}{\in}C^\infty(\mathbb{R};\mathscr{S}(\mathbb{R}))$, 
$u^{(n)}$ solves 
\begin{align*}
  u^{(n)}_t-iu^{(n)}_{xx}
& =
  2a(\lvert{u^{(n)}}\rvert^2)_xu^{(n)}
  +
  ia^2\lvert{u^{(n)}}\rvert^4u^{(n)},
\\
  u^{(n)}(0,x)
& =
  e^{ia\phi_0^{(n)}(x)}v_0^{(n)}(x). 
\end{align*}
Obviously, 
$$
\lVert
v^{(n)}(t)-v(t)
\rVert_\theta
=
\lVert
v_0^{(n)}-e^{-ia\phi_0}u_0
\rVert_\theta
\longrightarrow
0
\quad\text{as}\quad
n\rightarrow\infty
$$
for any $t\in\mathbb{R}$. 
Using Lemma~\ref{theorem:gauge-transform} again, we deduce that 
$$
\lVert
u^{(n)}(t)-u(t)
\rVert_\theta
\longrightarrow
0
\quad\text{as}\quad
n\rightarrow\infty
$$
for any $t\in\mathbb{R}$, 
and that $u$ is a solution to 
\eqref{equation:special}-\eqref{equation:speciald}. 
The uniquness of $v$ implies the uniquness of $u$. 
\end{proof}
\begin{proof}[Proof of Theorem~\ref{theorem:special}, Analyticity]
Fix arbitrary $T>0$. 
We remark that the solution $u$ is represented by 
$u=e^{ia\phi}e^{it\p^2}(e^{-ia\phi_0}u_0)$. 
By using Theorem~\ref{theorem:free} and Lemma~\ref{theorem:gauge-transform}, 
we heve
\begin{equation}
\lVert
\langle{x}\rangle^{-\alpha-2m}
\p_t^m\p^\alpha
v(t)
\rVert_\theta
\leqslant
M\rho^{\alpha+2m}\lvert{t}\rvert^{-\alpha-2m}
(\alpha+2m)!^s
\label{equation}
\end{equation}
for $t\in[-T,T]\setminus\{0\}$. 
Note that 
$$
\lVert{u_0}\rVert
=
\lVert{v_0}\rVert
\leqslant
\lVert{v_0}\rVert_\theta
=
\lVert{v(t)}\rVert_\theta
\leqslant
M.
$$
First we shall show that 
for $\alpha+2m\geqslant1$ and $t\in[-T,T]\setminus\{0\}$ 
\begin{equation}
\lVert
\langle{x}\rangle^{-\alpha-2m}
\p_t^m\p^\alpha
\phi(t)
\rVert_\theta
\leqslant
C_\theta
M^2
(2\rho)^{\alpha+2m-1}
\lvert{t}\rvert^{-\alpha-2m+1}
(\alpha+2m)!^s.
\label{equation:84} 
\end{equation}
When $m=0$ and $\alpha\geqslant1$, 
$$
\p^\alpha\phi
=
\p^{\alpha-1}\lvert{v}\rvert^2
=
\sum_{\beta=0}^{\alpha-1}
\frac{(\alpha-1)!}{\beta!(\alpha-\beta-1)!}
\p^{\beta}v
\p^{\alpha-\beta-1}\bar{v}. 
$$
By using this formula and the fact that $H^\theta(\mathbb{R})$ is an algebra, 
we have 
\begin{align}
  \lVert
  \langle{x}\rangle^{-\alpha}
  \p^\alpha\phi(t)
  \rVert_\theta
& \leqslant
  \sum_{\beta=0}^{\alpha-1}
  \frac{(\alpha-1)!}{\beta!(\alpha-\beta-1)!}
  \lVert
  \langle{x}\rangle^{-\alpha}
  \p^{\beta}v
  \p^{\alpha-\beta-1}\bar{v}
  \rVert_\theta
\nonumber
\\
& \leqslant
  C_\theta
  \sum_{\beta=0}^{\alpha-1}
  \frac{(\alpha-1)!}{\beta!(\alpha-\beta-1)!}
  \lVert
  \langle{x}\rangle^{-\alpha+1}
  \p^{\beta}v
  \p^{\alpha-\beta-1}\bar{v}
  \rVert_\theta
\nonumber
\\
& \leqslant
  C_\theta
  \sum_{\beta=0}^{\alpha-1}
  \frac{(\alpha-1)!}{\beta!(\alpha-\beta-1)!}
  \lVert
  \langle{x}\rangle^{-\beta}
  \p^{\beta}v
  \rVert_\theta
  \lVert
  \langle{x}\rangle^{-\alpha+\beta+1}
  \p^{\alpha-\beta-1}\bar{v}
  \rVert_\theta
\nonumber
\\
& \leqslant
  C_\theta
  M^2
  \rho^{\alpha-1}\lvert{t}\rvert^{-\alpha+1}
  \sum_{\beta=0}^{\alpha-1}
  \frac{(\alpha-1)!}{\beta!(\alpha-\beta-1)!}
  \beta!^s
  (\alpha-\beta-1)!^s
\nonumber
\\
& \leqslant 
  C_\theta
  M^2
  \rho^{\alpha-1}\lvert{t}\rvert^{-\alpha+1}
  \alpha!^s
  \sum_{\beta=0}^{\alpha-1}
  \frac{(\alpha-1)!}{\beta!(\alpha-\beta-1)!}
\nonumber
\\
& =
  C_\theta
  M^2
  (2\rho)^{\alpha-1}\lvert{t}\rvert^{-\alpha+1}
  \alpha!^s
\label{equation:85}  
\end{align}
for $\alpha\geqslant1$ and $t\in\mathbb{R}\setminus\{0\}$. 
On the other hand, when $m\geqslant1$, 
\begin{align*}
  \p_t^m\p^\alpha\phi 
& =
  \p_t^{m-1}\p^\alpha
  \int_{-\infty}^x
  \p_t\lvert{v}\rvert^2
  dy
\\
& =
  i\p_t^{m-1}\p^\alpha
  \int_{-\infty}^x
  (v_{yy}\bar{v}-v\bar{v}_{yy})
  dy
\\
& =
  i\p_t^{m-1}\p^\alpha
  (v_{x}\bar{v}-v\bar{v}_{x})
\\
& =
  2\im
  \sum_{l=0}^{m-1}
  \sum_{\beta=0}^\alpha
  \frac{(m-1)!}{l!(m-l-1)!}
  \frac{\alpha!}{\beta!(\alpha-\beta)!}
  \p_t^l\p^{\beta+1}v
  \p_t^{m-l-1}\p^{\alpha-\beta}\bar{v}.
\end{align*}
Then, we have 
\begin{align}
& \lVert
  \langle{x}\rangle^{-\alpha-2m}
  \p_t^m\p^\alpha
  \phi(t)
  \rVert_\theta
\nonumber
\\
& \leqslant
  2
  \sum_{l=0}^{m-1}
  \sum_{\beta=0}^\alpha
  \frac{(m-1)!}{l!(m-l-1)!}
  \frac{\alpha!}{\beta!(\alpha-\beta)!}
  \lVert
  \langle{x}\rangle^{-\alpha-2m}
  \p_t^l\p^{\beta+1}v
  \p_t^{m-l-1}\p^{\alpha-\beta}\bar{v}
  \rVert_\theta
\nonumber
\\
& \leqslant
  2C_\theta
  \sum_{l=0}^{m-1}
  \sum_{\beta=0}^\alpha
  \frac{(m-1)!}{l!(m-l-1)!}
  \frac{\alpha!}{\beta!(\alpha-\beta)!}
  \lVert
  \langle{x}\rangle^{-\alpha-2m+1}
  \p_t^l\p^{\beta+1}v
  \p_t^{m-l-1}\p^{\alpha-\beta}\bar{v}
  \rVert_\theta
\nonumber
\\
& \leqslant
  2C_\theta
  \sum_{l=0}^{m-1}
  \sum_{\beta=0}^\alpha
  \frac{(m-1)!}{l!(m-l-1)!}
  \frac{\alpha!}{\beta!(\alpha-\beta)!}
\nonumber
\\
& \qquad
  \times
  \lVert
  \langle{x}\rangle^{-\beta-2l-1}
  \p_t^l\p^{\beta+1}v
  \rVert_\theta
  \lVert
  \langle{x}\rangle^{-\alpha+\beta-2m+2l+2}
  \p_t^{m-l-1}\p^{\alpha-\beta}\bar{v}
  \rVert_\theta
\nonumber
\\
& \leqslant
  2C_\theta
  M^2\rho^{\alpha+2m-1}
  \lvert{t}\rvert^{-\alpha-2m+1}
  \sum_{l=0}^{m-1}
  \sum_{\beta=0}^\alpha
  \frac{(m-1)!}{l!(m-l-1)!}
  \frac{\alpha!}{\beta!(\alpha-\beta)!}
\nonumber
\\
& \qquad
  \times
  (\beta+1+2l)!^s
  (\alpha-\beta-2m+2l-2)!^s
\nonumber
\\
& \leqslant
  2C_\theta
  M^2\rho^{\alpha+2m-1}
  \lvert{t}\rvert^{-\alpha-2m+1}
  (\alpha+2m-1)!^s
  \sum_{l=0}^{m-1}
  \sum_{\beta=0}^\alpha
  \frac{(m-1)!}{l!(m-l-1)!}
  \frac{\alpha!}{\beta!(\alpha-\beta)!}
\nonumber
\\
& =
  2^{\alpha+m}C_\theta
  M^2\rho^{\alpha+2m-1}
  \lvert{t}\rvert^{-\alpha-2m+1}
  (\alpha+2m-1)!^s
\nonumber
\\
& \leqslant
  C_\theta
  M^2(2\rho)^{\alpha+2m-1}
  \lvert{t}\rvert^{-\alpha-2m+1}
  (\alpha+2m-1)!^s
\label{equation:86a}
\end{align}
for $m\geqslant1$ and $t\in[-T,T]\setminus\{0\}$. 
Combining \eqref{equation:85} and \eqref{equation:86a}, 
we obtain \eqref{equation:84}. 
\par
Set 
$$
A=
M
+
\left\{
1+
\frac{T}{2\rho}
\right\}
{C_\theta}M^2. 
$$
If we replace $2\rho$ by $\rho$, we have 
\begin{align}
  \lVert
  \langle{x}\rangle^{-\alpha-2m}
  \p_t^m\p^\alpha
  \phi(t)
  \rVert_\theta
& \leqslant
  A
  \rho^{\alpha+2m}
  \lvert{t}\rvert^{-\alpha-2m}
  m!^{2s}\alpha!^\sigma,
\label{equation:87}
\\
  \lVert\phi(t)\rVert_{\mathscr{B}^{\theta+1/2}}
& \leqslant
  A,
\label{equation:88}
\end{align}
for $t\in[-T,T]\setminus\{0\}$ and $\alpha+2m\ne0$. 
We compute the regularity of $u$. 
The Taylor series of the exponential function gives 
\begin{align*}
  \langle{x}\rangle^{-\alpha-2m}
  \p_t^m\p^\alpha
  u 
& =
  \langle{x}\rangle^{-\alpha-2m}
  \p_t^m\p^\alpha
  (e^{ia\phi}v)
\\
& =
  \sum_{k=0}^\infty
  \frac{(ia)^k}{k!}
  \langle{x}\rangle^{-\alpha-2m}
  \p_t^m\p^\alpha
  \Bigl(\phi^ku\Bigr)
\\
& =
  \sum_{k=0}^\infty
  \frac{(ia)^k}{k!}
  \sum_{\substack{m_0+\dotsb+m_k=m \\ \alpha_0+\dotsb+\alpha_k=\alpha}}
  \frac{m!}{m_0!{\dotsm}m_k!}
  \frac{\alpha!}{\alpha_0!{\dotsm}\alpha_k!}
\\
& \qquad
  \times
  \langle{x}\rangle^{-\alpha_0-2m_0}
  \p_t^{m_0}\p^{\alpha_0}
  v
  \prod_{j=1}^k
  \langle{x}\rangle^{-\alpha_j-2m_j}
  \p_t^{m_j}\p^{\alpha_j}\phi.
\end{align*}
Applying 
Lemma~\ref{theorem:free} to $v$, 
\eqref{equation:87} to $\phi$ for $\alpha_j+2m_j\ne0$, 
and 
\eqref{equation:88} to $\phi$ fot $\alpha_j+2m_j=0$ 
respectively, 
we deduce 
\begin{align*}
& \lVert
  \langle{x}\rangle^{-\alpha-2m}
  \p_t^m\p^\alpha
  u (t)
  \rVert_\theta 
\\
& \leqslant
  \sum_{k=0}^\infty
  \frac{\lvert{a}\rvert^k}{k!}
  \sum_{\substack{m_0+\dotsb+m_k=m \\ \alpha_0+\dotsb+\alpha_k=\alpha}}
  \frac{m!}{m_0!{\dotsm}m_k!}
  \frac{\alpha!}{\alpha_0!{\dotsm}\alpha_k!}
\\
& \qquad
  \times
  \left\lVert
  \langle{x}\rangle^{-\alpha_0-2m_0}
  \p_t^{m_0}\p^{\alpha_0}
  v(t)
  \prod_{j=1}^k
  \langle{x}\rangle^{-\alpha_j-2m_j}
  \p_t^{m_j}\p^{\alpha_j}\phi(t)
  \right\rVert_\theta
\\
& \leqslant
  \sum_{k=0}^\infty
  \frac{\lvert{a}\rvert^k}{k!}
  \sum_{\substack{m_0+\dotsb+m_k=m \\ \alpha_0+\dotsb+\alpha_k=\alpha}}
  \frac{m!}{m_0!{\dotsm}m_k!}
  \frac{\alpha!}{\alpha_0!{\dotsm}\alpha_k!}
\\
& \qquad
  \times
  C_\theta^{k+1}A^{k+1}\rho^{\alpha+2m}\lvert{t}\rvert^{-\alpha+2m}
  \prod_{j=0}^k\alpha!^\sigma{m_j!^{2s}}
\\
& =
  C_\theta{A}
  \rho^{\alpha+2m}\lvert{t}\rvert^{-\alpha-2m}
  \alpha!^\sigma{m!^{2s}}
  \sum_{k=0}^\infty
  \frac{(C_\theta\lvert{a}\rvert{A})^k}{k!}
\\
& \qquad
  \times
  \sum_{\substack{m_0+\dotsb+m_k=m \\ \alpha_0+\dotsb+\alpha_k=\alpha}}
  \left\{\frac{m!}{m_0!{\dotsm}m_k!}\right\}^{1-2s}
  \left\{\frac{\alpha!}{\alpha_0!{\dotsm}\alpha_k!}\right\}^{1-\sigma}
\\
& \leqslant
  C_\theta{A}
  (2\rho)^{\alpha+2m}\lvert{t}\rvert^{-\alpha-2m}
  \alpha!^\sigma{m!^{2s}}
  \sum_{k=0}^\infty
  \frac{(C_\theta\lvert{a}\rvert{A})^k}{k!}
  \sum_{\substack{m_0+\dotsb+m_k=m \\ \alpha_0+\dotsb+\alpha_k=\alpha}}
  2^{-\alpha-m}
\\
& \leqslant
  C_\theta{A}
  (2\rho)^{\alpha+2m}\lvert{t}\rvert^{-\alpha-2m}
  \alpha!^\sigma{m!^{2s}}
  \sum_{k=0}^\infty
  \frac{(C_\theta\lvert{a}\rvert{A})^k}{k!}
  \left\{\sum_{p=0}^\infty2^{-p}\right\}^{2k+2}
\\
& \leqslant
  4C_\theta{A}
  (2\rho)^{\alpha+2m}\lvert{t}\rvert^{-\alpha-2m}
  \alpha!^\sigma{m!^{2s}}
  \sum_{k=0}^\infty
  \frac{(4C_\theta\lvert{a}\rvert{A})^k}{k!}
\\
& =
  \Bigl(4C_\theta{A}\exp(4C_\theta\lvert{a}\rvert{A})\Bigr)
  (2\rho)^{\alpha+2m}\lvert{t}\rvert^{-\alpha2-m}
  \alpha!^\sigma{m!^{2s}}, 
\end{align*}
which is desired. 
This completes the proof.
\end{proof}
{\bf Acknowledgment.}\quad
The author would like to express to the referee his deepest gratitude for
reading such a long manuscript carefully. 
%
%

\end{document}